\documentclass[11pt]{article}
\usepackage[a4paper,margin=2.6cm]{geometry}
\usepackage{amsmath,amssymb,amsthm,mathtools,bm}
\usepackage{mathrsfs}
\usepackage{placeins}
\usepackage{booktabs,enumitem,microtype,graphicx,rotating}
\usepackage[numbers,sort&compress]{natbib}
\usepackage[colorlinks=true,linkcolor=blue,citecolor=blue,urlcolor=blue]{hyperref}

\newtheorem{theorem}{Theorem}[section]
\newtheorem{proposition}[theorem]{Proposition}
\newtheorem{lemma}[theorem]{Lemma}

\newtheorem{assumption}[theorem]{Assumption}
\theoremstyle{definition}
\newtheorem{definition}[theorem]{Definition}

\newcommand{\bs}{\bm b_\sigma}

\title{\textbf{Convergence Analysis of a Finite-Volume Scheme for a
		Microglia--Amyloid Chemotaxis Model with Measure-Valued Vascular
		Boundary Sources}}
\author{
\textbf{Elmahdi ERRAJI}\\[2mm]
\small Laboratoire Mathématiques, Informatique et Modélisation des Systèmes Complexes\\
\small École Supérieure de Technologie d'Essaouira, Université Cadi Ayyad\\
\small Km 9, Route d'Agadir, Essaouira Aljadida, BP 383, Essaouira, Morocco\\
\small Corresponding author: \href{mailto:el.erraji@uca.ac.ma}{el.erraji@uca.ac.ma}
}
\date{}

\begin{document}
	\maketitle
	
	\begin{abstract}
		We study a parabolic--parabolic chemotaxis system motivated by microglial recruitment toward an amyloid-$\beta$-associated signal in Alzheimer's disease. The signal is subject to a nonnegative Radon measure-valued Neumann influx on a vascular portion of the boundary, while microglial cells respond to a nonlocal spatial average of the signal. For a fixed sensing length $\sigma>0$, the chemotactic velocity is defined by
		\[
		b_\sigma[v]=\nabla\mathcal K_\sigma[v].
		\]
		For every fixed $\sigma>0$, the nonlocal operator maps finite signal mass into a bounded spatially Lipschitz velocity field. We introduce a weak solution concept adapted to the low regularity induced by the boundary measure. We then construct a fully implicit upwind finite-volume approximation in which the boundary source is discretized through its exact mass on each boundary face--time cell. We establish existence and positivity of the discrete solutions, together with uniform mass, energy, discrete-gradient, and compactness estimates. Finally, we prove that, up to a subsequence, the discrete solutions converge toward a nonnegative weak solution of the continuous problem.
	\end{abstract}
	
	\noindent\textbf{Keywords:} Alzheimer's disease; microglia; amyloid-$\beta$; nonlocal chemotaxis; measure-valued Neumann data.
	
	\section{Introduction}
	\label{sec:introduction}
	
	Alzheimer's disease is associated with amyloid-$\beta$ accumulation,
	impaired clearance, cerebrovascular dysfunction, and sustained
	neuroinflammation. Soluble amyloid species interact with the brain
	parenchyma and cerebral vasculature through perivascular drainage and
	blood--brain barrier transport
	\cite{Preston2003,TarasoffConway2015,Greenberg2020}. Although vascular
	pathways mainly contribute to clearance, RAGE-mediated transport may
	also carry circulating amyloid-$\beta$ into the brain
	\cite{DeaneEtAl2003}. Microglia migrate toward amyloid-associated
	signals and participate in activation, phagocytosis, and clearance
	responses \cite{RogersLue2001,Cho2013,Lau2023}. These mechanisms
	motivate spatial models coupling microglial migration, soluble
	amyloid-associated signalling, and localized vascular exchange.
	
	Chemotactic models of microglial aggregation around amyloid plaques
	were considered in
	\cite{LucaChavezRossEdelsteinKeshetMogilner2003}, within the broader
	Keller--Segel framework \cite{KellerSegel1970}. Weak solvability and
	finite-volume convergence have also been studied for nonlinear and
	degenerate chemotaxis systems
	\cite{BendahmaneKarlsenUrbano2007,AndreianovBendahmaneSaad2011}.
	Since biological sensing may occur over a finite neighbourhood,
	nonlocal taxis operators provide an alternative to pointwise gradient
	sensing
	\cite{ChenPainterSurulescuZhigun2020,EckardtPainterSurulescuZhigun2020}.
	Related nonlocal aggregation models with degenerate diffusion have been
	investigated analytically and in optimal-control settings
	\cite{BendahmaneKaramiErrajiAtlasAfraites2023}.
	
	We consider an activated microglial density \(u\) and an effective
	soluble amyloid-$\beta$-associated signal \(v\) satisfying
	\[
	\begin{cases}
		\partial_tu-D_u\Delta u
		+\chi\nabla\!\cdot\!\bigl(u\nabla\mathcal K_\sigma[v]\bigr)
		=\mathcal R_u(x,t,u,v),\\
		\partial_tv-D_v\Delta v=\mathcal R_v(x,t,u,v),
	\end{cases}
	\qquad
	\mathcal K_\sigma[v](x)=\displaystyle\int_\Omega
	K_\sigma(x,y)v(y)\,dy .
	\]
	For each fixed sensing length \(\sigma>0\), the assumed kernel
	regularity maps finite signal mass into a bounded Lipschitz
	chemotactic velocity. The reaction terms account for microglial
	activation and loss, signal production and degradation, and
	microglia-mediated clearance.
	
	The distinctive feature is the spatially measure-valued boundary
	influx
	\[
	D_v\partial_\nu v=\mu
	\quad\text{on }\Gamma_{\mathrm v}\times(0,T),
	\qquad
	\partial_\nu v=0
	\quad\text{on }\Gamma_{\mathrm o}\times(0,T),
	\qquad
	\mu\in L^1_{w^*}\bigl(0,T;\mathcal M_+(\Gamma_{\mathrm v})\bigr).
	\]
	Here \(\mu\) represents an effective pathological net inward vascular
	exchange rather than the complete bidirectional blood--brain barrier
	transport. The framework includes concentrated inputs
	\(\mu_t=\sum_jq_j(t)\delta_{\xi_j}\), interpreted as tissue-scale
	idealizations of unresolved vascular patches.
	
	Measure data generally produce substantially lower regularity than
	square-integrable forcing \cite{BoccardoGallouet1989}. Related
	finite-volume theories have been developed for elliptic and Neumann
	problems with low-regularity data
	\cite{DroniouGallouetHerbin2003,AounGuibe2024}. In the present coupled
	system, the boundary measure prevents the standard quadratic estimate
	for \(v\); instead, a global space--time truncation argument yields
	\[
	v\in L^\infty(0,T;L^1(\Omega))
	\cap L^q(0,T;W^{1,q}(\Omega)),
	\qquad
	1<q<\frac{d+2}{d+1}.
	\]
	This subquadratic regularity must still be sufficient to obtain
	compactness and identify the nonlinear chemotactic flux.
	
	Sparse measure controls and inverse-source formulations are commonly
	developed once a suitable forward state relation is available.
	Classical elliptic measure-control theory uses a well-defined
	measure-data state problem
	\cite{CasasClasonKunisch2012}, whereas abstract inverse theories start
	from a prescribed linear forward operator
	\cite{BrediesPikkarainen2013}. For nonlinear chemotaxis systems,
	optimal-control existence can instead be formulated over admissible
	weak state--control triples when uniqueness, and hence a single-valued
	control-to-state map, is unavailable
	\cite{CorreaViannaFilhoGuillenGonzalez2024}. These approaches still
	require existence, compactness, and sequential closedness of the state
	relation. To the best of our knowledge, such foundations have not been
	established for a nonlocal chemotaxis system driven by a
	measure-valued boundary influx. The present work addresses this prior
	analytical gap and prepares the state theory needed for future sparse
	vascular-source identification and control.
	
	We construct a fully implicit finite-volume scheme on admissible
	orthogonal meshes. Two-point fluxes approximate diffusion, an upwind
	flux treats nonlocal taxis, and the boundary datum is represented by
	its exact measure on each boundary face--time cell. This preserves
	local conservation and incorporates singular flux data directly into
	the boundary control volumes
	\cite{EymardGallouetHerbin2000}. Related positivity- and
	structure-preserving Keller--Segel schemes were studied in
	\cite{AndreianovBendahmaneSaad2011,
		BessemoulinChatardJungel2014,AlmeidaBubbaPerthamePouchol2019}.
	
	The main contribution is a finite-volume convergence analysis for this
	nonlocal microglia--amyloid system. For every fixed \(\sigma>0\) and
	every uniformly regular refining sequence of admissible meshes, we
	establish existence and nonnegativity of fully implicit discrete
	solutions without a CFL restriction, derive the required mass, energy,
	truncation, gradient, and translation estimates, and prove
	subsequential convergence toward a nonnegative weak solution. This
	provides an existence and approximation framework, but not yet a
	sparse-control or inverse-problem theory.
	
	Section~\ref{sec:model} introduces the model, assumptions, and weak
	formulation. Section~\ref{sec:finite-volume} presents the scheme and
	its convergence analysis. Section~\ref{sec:numerical} reports the
	numerical experiments, and Section~\ref{sec:conclusion} concludes the
	paper.

	\section{Mathematical model}
	\label{sec:model}
	
	\subsection{Biological model and governing equations}
	\label{subsec:model}
	
	Let \(\Omega\subset\mathbb R^d\), \(d\in\{2,3\}\), and
	\(Q_T:=\Omega\times(0,T)\). We denote by \(u=u(x,t)\) the density of
	motile microglial cells and by \(v=v(x,t)\) an effective soluble
	amyloid-\(\beta\)-associated signal promoting microglial migration
	\cite{Cho2013,Lau2023}.
	
	For a fixed sensing length \(\sigma>0\), define
	\[
	\bs[v]:=\nabla\mathcal K_\sigma[v],
	\qquad
	\mathcal K_\sigma[v](x)
	:=\int_\Omega K_\sigma(x,y)v(y)\,dy.
	\]
	The model is
	\begin{equation}
		\label{eq:ad-nonlocal-model}
		\left\{
		\begin{aligned}
			\partial_tu-D_u\Delta u+\chi\nabla\cdot(u\bs[v])
			&=\mathcal R_u(x,t,u,v)
			&&\text{in }Q_T,\\
			\partial_tv-D_v\Delta v
			&=\mathcal R_v(x,t,u,v)
			&&\text{in }Q_T,\\
			(D_u\nabla u-\chi u\bs[v])\cdot\nu
			&=0
			&&\text{on }\partial\Omega\times(0,T),\\
			D_v\partial_\nu v
			&=\mu
			&&\text{on }\Gamma_{\mathrm v}\times(0,T),\\
			\partial_\nu v
			&=0
			&&\text{on }\Gamma_{\mathrm o}\times(0,T).
		\end{aligned}
		\right.
	\end{equation}
	Here \(D_u,D_v,\chi>0\), and
	\(\mu\in\mathfrak M_T\) is a nonnegative measure-valued vascular
	influx \cite{Preston2003}.
	
	The reaction terms are
	\begin{align}
		\mathcal R_u(x,t,u,v)
		&=
		\lambda_{\mathrm a}
		\frac{v^{m_{\mathrm a}}}
		{K_{\mathrm a}^{m_{\mathrm a}}+v^{m_{\mathrm a}}}
		\bigl(u_{\mathrm r}(x,t)-u\bigr)_+
		-d_uu,
		\label{eq:reaction-u}\\
		\mathcal R_v(x,t,u,v)
		&=
		s_v(x,t)-d_vv
		-\kappa_{\mathrm c}u\frac{v}{K_{\mathrm c}+v},
		\label{eq:reaction-v}
	\end{align}
	where \(r_+:=\max\{r,0\}\). These terms model amyloid-dependent
	microglial activation and loss, distributed signal production and
	degradation, and saturating microglia-mediated clearance.
	
	\subsection{Assumptions on the data}
	\label{subsec:model-data}
	
	\begin{assumption}[Geometry, data, boundary source, and kernel]
		\label{ass:model-data}
		Let \(d\in\{2,3\}\), \(T>0\), and let
		\(\Omega\subset\mathbb R^d\) be a bounded, connected polygonal domain
		if \(d=2\), or polyhedral domain if \(d=3\).
		
		Assume that \(\partial\Omega\) admits a finite conforming decomposition
		\[
		\mathfrak P_{\partial}=\{F_1,\ldots,F_J\},
		\qquad
		\partial\Omega
		=
		\bigcup_{j=1}^J\overline F_j^{\,\partial\Omega},
		\qquad
		F_i\cap F_j=\varnothing\quad(i\neq j),
		\]
		where the \(F_j\) are relatively open planar
		\((d-1)\)-dimensional polytopes and distinct closed cells meet only
		along their relative boundaries. For
		\(\mathfrak P_{\mathrm v}\subset\mathfrak P_{\partial}\), set
		\[
		\Gamma_{\mathrm v}
		:=
		\bigcup_{F\in\mathfrak P_{\mathrm v}}
		\overline F^{\,\partial\Omega},
		\qquad
		\Gamma_{\mathrm o}:=\partial\Omega\setminus\Gamma_{\mathrm v},
		\qquad
		\Sigma:=\partial_{\partial\Omega}\Gamma_{\mathrm v}.
		\]
		Then \(\Gamma_{\mathrm v}\) is compact,
		\(\Sigma\) lies in the \((d-2)\)-dimensional skeleton of
		\(\mathfrak P_{\partial}\), and
		\(\mathcal H^{d-1}(\Sigma)=0\).
		
		Fix \(\sigma>0\). The parameters and distributed data satisfy
		\[
		\begin{gathered}
			D_u,D_v,\chi,\lambda_{\mathrm a},K_{\mathrm a},K_{\mathrm c}>0,
			\qquad
			m_{\mathrm a}\geq1,
			\qquad
			d_u,d_v,\kappa_{\mathrm c}\geq0,\\
			u_{\mathrm r},s_v\in L^\infty_+(Q_T),
			\qquad
			u_0,v_0\in L^\infty_+(\Omega).
		\end{gathered}
		\]
		
		The boundary datum
		\(\mu=(\mu_t)_{t\in(0,T)}\) satisfies
		\[
		\mu\in
		\mathfrak M_T
		:=
		L^1_{w^\ast}
		\bigl(0,T;\mathcal M_+(\Gamma_{\mathrm v})\bigr),
		\qquad
		\mu_t(\Sigma)=0
		\quad\text{for a.e. }t.
		\]
		Its associated space--time measure
		\(\overline\mu\in
		\mathcal M_+(\Gamma_{\mathrm v}\times[0,T])\) is defined by
		\[
		\int_{\Gamma_{\mathrm v}\times[0,T]}\Phi\,d\overline\mu
		:=
		\int_0^T\int_{\Gamma_{\mathrm v}}
		\Phi(\xi,t)\,d\mu_t(\xi)\,dt,
		\qquad
		\Phi\in C(\Gamma_{\mathrm v}\times[0,T]).
		\]
		
		The kernel satisfies
		\[
		K_\sigma\in
		C^1(\overline\Omega\times\overline\Omega)
		\cap W^{2,\infty}(\Omega\times\Omega),
		\qquad
		K_\sigma\geq\kappa_\sigma>0,
		\qquad
		\int_\Omega K_\sigma(x,y)\,dy=1
		\]
		for every \((x,y)\in\overline\Omega\times\overline\Omega\), with the
		last identity understood for every \(x\in\overline\Omega\).
		The reaction terms are those defined in
		\eqref{eq:reaction-u}--\eqref{eq:reaction-v}.
	\end{assumption}
	
	The reactions are Carath\'eodory functions, locally Lipschitz on
	\([0,\infty)^2\), and quasi-positive. Moreover, for a.e.
	\((x,t)\in Q_T\) and all \(u,v\geq0\),
	\[
	\begin{aligned}
		\mathcal R_u(x,t,u,v)
		&\leq\lambda_{\mathrm a}u_{\mathrm r}(x,t)-d_uu,
		&
		|\mathcal R_u(x,t,u,v)|
		&\leq\lambda_{\mathrm a}u_{\mathrm r}(x,t)+d_uu,
		\\
		\mathcal R_v(x,t,u,v)
		&\leq s_v(x,t)-d_vv,
		&
		|\mathcal R_v(x,t,u,v)|
		&\leq s_v(x,t)+d_vv+\kappa_{\mathrm c}u.
	\end{aligned}
	\]
	
	Set
	\[
	\mathscr T_T
	:=
	\left\{
	\zeta\in C^1([0,T];C^2(\overline\Omega)):
	\zeta(\cdot,T)=0
	\right\}.
	\]
	
	\begin{definition}[Weak solution]
		\label{def:weak-solution}
		Let Assumption~\ref{ass:model-data} hold and
		\[
		1<q<q_*:=\frac{d+2}{d+1}.
		\]
		A pair \((u,v)\) is a weak solution of
		\eqref{eq:ad-nonlocal-model} if \(u,v\geq0\) a.e. in \(Q_T\) and
		\[
		\begin{aligned}
			u&\in L^\infty(0,T;L^2(\Omega))
			\cap L^2(0,T;H^1(\Omega)),
			&
			\partial_tu&\in L^2(0,T;H^1(\Omega)'),\\
			v&\in L^\infty(0,T;L^1(\Omega))
			\cap L^q(0,T;W^{1,q}(\Omega)),
		\end{aligned}
		\]
		and, for every \((\varphi,\psi)\in\mathscr T_T^2\),
		\begin{align}
			&-\int_{Q_T}
			\bigl(u\,\partial_t\varphi+v\,\partial_t\psi\bigr)\,dx\,dt
			+\int_{Q_T}
			\bigl(
			D_u\nabla u\cdot\nabla\varphi
			+D_v\nabla v\cdot\nabla\psi
			-\chi u\nabla\mathcal K_\sigma[v]\cdot\nabla\varphi
			\bigr)\,dx\,dt
			\nonumber\\
			&\qquad=
			\int_\Omega
			\bigl(u_0\varphi(\cdot,0)+v_0\psi(\cdot,0)\bigr)\,dx
			+\int_{Q_T}
			\bigl(
			\mathcal R_u(x,t,u,v)\varphi
			+\mathcal R_v(x,t,u,v)\psi
			\bigr)\,dx\,dt
			\nonumber\\
			&\qquad\quad+
			\int_{\Gamma_{\mathrm v}\times(0,T]}
			\psi\,d\overline\mu.
			\label{eq:weak-formulation}
		\end{align}
	\end{definition}
	
	\section{Finite-volume discretization}
	\label{sec:finite-volume}
	
	Throughout this section, every mesh resolves the fixed boundary
	partition \(\mathfrak P_{\partial}\): for each exterior face
	\(e\in\mathcal E_{\mathrm{ext}}\),
	\[
	\operatorname{relint}(e)\subset\Gamma_{\mathrm v}\setminus\Sigma
	\quad\text{or}\quad
	\operatorname{relint}(e)\subset\Gamma_{\mathrm o}.
	\]
	
	\subsection{Discretization framework and scheme}
	\label{subsec:disc-frame}
	
	Let
	\(\mathcal D=(\mathcal T,\mathcal E,\mathcal P)\) be an admissible
	orthogonal finite-volume mesh of \(\Omega\) in the sense of
	\cite{EymardGallouetHerbin2000}, with cells \(K\), centers \(x_K\), and
	\[
	\mathcal E
	=
	\mathcal E_{\mathrm{int}}
	\mathbin{\dot\cup}
	\mathcal E_{\mathrm{ext}}.
	\]
	For \(e=K|L\in\mathcal E_{\mathrm{int}}\), set
	\[
	d_{KL}:=|x_K-x_L|,
	\qquad
	\tau_e:=\frac{|e|}{d_{KL}},
	\qquad
	h_{\mathcal D}:=\max_{K\in\mathcal T}\operatorname{diam}(K),
	\]
	and let \(\nu_{K,e}\) be the unit normal from \(K\) to \(L\).
	
	We consider a uniformly regular family: there exist
	\(\zeta,C_{\mathrm{reg}}>0\), independent of \(\mathcal D\), such that
	\[
	\operatorname{dist}(x_K,e)
	\geq
	\zeta\operatorname{diam}(K)
	\quad
	(e\in\mathcal E_K),
	\]
	and
	\begin{equation}
		\label{eq:mesh-regularity}
		\sum_{\substack{e=K|L\in\mathcal E_{\mathrm{int}}\\
				e\in\mathcal E_K}}
		|e|d_{KL}
		\leq C_{\mathrm{reg}}|K|,
		\qquad K\in\mathcal T.
	\end{equation}
	Constants denoted by \(C\) below are independent of
	\(h_{\mathcal D}\) and \(\Delta t\).
	
	Let
	\[
	t^k:=k\Delta t,
	\qquad
	\Delta t:=\frac{T}{N_T},
	\qquad
	I_k:=(t^k,t^{k+1}],
	\qquad
	\delta_tw_K^{k+1}
	:=
	\frac{w_K^{k+1}-w_K^k}{\Delta t}.
	\]
	
	Let \(X_{\mathcal D}\) be the space of cellwise constant functions.
	For \(1\leq p<\infty\), define
	\[
	\begin{aligned}
		\|w\|_{0,p,\mathcal D}^p
		&:=
		\sum_{K\in\mathcal T}|K||w_K|^p,\\
		|w|_{1,p,\mathcal D}^p
		&:=
		\sum_{e=K|L\in\mathcal E_{\mathrm{int}}}
		|e|d_{KL}
		\left|\frac{w_L-w_K}{d_{KL}}\right|^p,
		\label{eq:discrete-W1p-seminorm}\\
		\|w\|_{1,p,\mathcal D}^p
		&:=
		\|w\|_{0,p,\mathcal D}^p+|w|_{1,p,\mathcal D}^p,
		\qquad
		|w|_{1,\mathcal D}:=|w|_{1,2,\mathcal D}.
	\end{aligned}
	\]
	The discrete dual norm is
	\[
	\|z\|_{-1,\mathcal D}
	:=
	\sup_{\varphi\in X_{\mathcal D}\setminus\{0\}}
	\frac{\sum_K|K|z_K\varphi_K}
	{\|\varphi\|_{1,2,\mathcal D}}.
	\]
	
	For \(e=K|L\in\mathcal E_{\mathrm{int}}\), let
	\[
	D_e
	:=
	\operatorname{conv}(x_K,e)\cup\operatorname{conv}(x_L,e),
	\qquad
	|D_e|=\frac{|e|d_{KL}}d.
	\]
	The diamond gradient is
	\begin{equation}
		\label{eq:discrete-gradient-definition}
		\nabla_{\mathcal D}w
		:=
		d\,\frac{w_L-w_K}{d_{KL}}\nu_{K,e}
		\quad\text{on }D_e,
	\end{equation}
	and is zero on boundary subdiamonds. It satisfies
	\begin{equation}
		\label{eq:discrete-gradient-stability}
		\|\nabla_{\mathcal D}w\|_{L^p(\Omega)}^p
		=
		d^{p-1}|w|_{1,p,\mathcal D}^p.
	\end{equation}
	
	For conservative interior fluxes,
	\(F_{K,e}+F_{L,e}=0\) on \(e=K|L\),
	\begin{equation}
		\label{eq:discrete-integration-by-parts}
		\sum_Kw_K\sum_{e\in\mathcal E_K}F_{K,e}
		=
		\sum_{e=K|L\in\mathcal E_{\mathrm{int}}}
		F_{K,e}(w_K-w_L)
		+
		\sum_K\sum_{e\in\mathcal E_K\cap\mathcal E_{\mathrm{ext}}}
		F_{K,e}w_K.
	\end{equation}
	
	For a sequence \((w^k)_{k=0}^{N_T}\), define, on \(K\times I_k\),
	\[
	w_{\mathcal D}:=w_K^{k+1},
	\qquad
	\delta_tw_{\mathcal D}:=\delta_tw_K^{k+1},
	\qquad
	\nabla_{\mathcal D}w_{\mathcal D}
	:=
	\nabla_{\mathcal D}w^{k+1}.
	\]
	The averaging operators are
	\[
	(\Pi_{\mathcal D}\phi)_K
	:=
	\frac1{|K|}\int_K\phi\,dx,
	\qquad
	(\Pi_{\mathcal D,\Delta t}f)_K^{k+1}
	:=
	\frac1{\Delta t|K|}
	\int_{I_k}\int_Kf\,dx\,dt.
	\]
	
	\begin{lemma}[Discrete functional estimates]
		\label{lem:discrete-functional-estimates}
		Let \(1<p<\infty\). Uniformly with respect to the mesh,
		\[
		\|\Pi_{\mathcal D}\phi\|_{0,p,\mathcal D}
		\leq\|\phi\|_{L^p(\Omega)},
		\qquad
		|\Pi_{\mathcal D}\phi|_{1,p,\mathcal D}
		\leq C\|\nabla\phi\|_{L^p(\Omega)}.
		\]
		Moreover,
		\begin{align}
			\|w\|_{0,p,\mathcal D}
			&\leq
			C\left(
			|w|_{1,p,\mathcal D}
			+
			|\Omega|^{1/p-1}\|w\|_{0,1,\mathcal D}
			\right),
			\label{eq:discrete-Poincare-L1}\\
			\|w\|_{0,2,\mathcal D}^2
			&\leq
			\varepsilon|w|_{1,\mathcal D}^2
			+
			C_\varepsilon\|w\|_{0,1,\mathcal D}^2,
			\qquad \varepsilon>0,
			\label{eq:discrete-GN-absorption}\\
			\|w\|_{0,r_*,\mathcal D}^{r_*}
			&\leq
			C\|w\|_{0,1,\mathcal D}^{2/d}
			\left(
			|w|_{1,\mathcal D}^2
			+
			\|w\|_{0,1,\mathcal D}^2
			\right),
			\qquad
			r_*:=\frac{2(d+1)}d.
			\label{eq:discrete-GN-truncation}
		\end{align}
		Finally, with
		\(\Omega_\xi:=\{x\in\Omega:[x,x+\xi]\subset\Omega\}\),
		\begin{equation}
			\label{eq:discrete-space-translation}
			\int_{\Omega_\xi}
			|w_{\mathcal D}(x+\xi)-w_{\mathcal D}(x)|^p\,dx
			\leq
			C|\xi|(|\xi|+h_{\mathcal D})^{p-1}
			|w|_{1,p,\mathcal D}^p.
		\end{equation}
	\end{lemma}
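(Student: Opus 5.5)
The plan is to treat the estimates in the order stated, each time reducing to a standard finite-volume inequality from \cite{EymardGallouetHerbin2000} and exploiting the uniform regularity \eqref{eq:mesh-regularity}.

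\textbf{Projection bounds.} The bound \(\|\Pi_{\mathcal D}\phi\|_{0,p,\mathcal D}\le\|\phi\|_{L^p}\) is Jensen's inequality applied cellwise.

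For the seminorm we argue first for smooth \(\phi\) and conclude by density. For \(e=K|L\) we write
\[
(\Pi_{\mathcal D}\phi)_L-(\Pi_{\mathcal D}\phi)_K
=\frac1{|K||L|}\int_K\int_L\bigl(\phi(y)-\phi(x)\bigr)\,dy\,dx ,
\]
and express each difference \(\phi(y)-\phi(x)\) as an integral of \(\nabla\phi\) along a segment. The two-cell patch \(K\cup L\) is star-shaped with respect to a ball of radius comparable to \(\operatorname{diam}(K)\); this uses \(\operatorname{dist}(x_K,e)\ge\zeta\operatorname{diam}(K)\), which also gives \(d_{KL}\simeq\operatorname{diam}K\simeq\operatorname{diam}L\). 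A Poincaré-type inequality on this patch then gives
\[
\bigl|(\Pi_{\mathcal D}\phi)_L-(\Pi_{\mathcal D}\phi)_K\bigr|^p
\le C\,d_{KL}^{\,p}\,|K\cup L|^{-1}\int_{K\cup L}|\nabla\phi|^p .
\]
Multiplying by \(|e|d_{KL}^{1-p}\) and using \(|e|d_{KL}\le C|K|\), which follows from \eqref{eq:mesh-regularity}, we can sum over faces. Each cell belongs to a bounded number of faces, since the face count per cell is bounded by regularity, so the sum is controlled by \(C\|\nabla\phi\|_{L^p}^p\).

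\textbf{Poincaré inequality \eqref{eq:discrete-Poincare-L1}.} We write \(w=(w-m)+m\) with \(m\) the mean of \(w\). Then \(|m|\,|\Omega|^{1/p}\le|\Omega|^{1/p-1}\|w\|_{0,1,\mathcal D}\). The mean-free part is handled by the discrete Poincaré–Wirtinger inequality in \(L^p\) on connected polyhedra, which holds uniformly on regular meshes. I will obtain it by the standard line-crossing argument of \cite{EymardGallouetHerbin2000}, adapted to \(p\ne2\) with Hölder's inequality.

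\textbf{Interpolation \eqref{eq:discrete-GN-absorption}.} From the discrete Sobolev embedding \(\|w-m\|_{0,2^*,\mathcal D}\le C|w|_{1,\mathcal D}\) (for \(d=2\), any finite exponent), we interpolate \(L^2\) between \(L^1\) and \(L^{2^*}\):
\[
\|w\|_{0,2}\le\|w\|_{0,1}^{\theta}\|w\|_{0,2^*}^{1-\theta}.
\]
Young's inequality then produces \(\varepsilon|w|_{1}^2+C_\varepsilon\|w\|_{0,1}^2\), with the mean part bounded directly by \(\|w\|_{0,1}\).

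\textbf{Gagliardo–Nirenberg \eqref{eq:discrete-GN-truncation}.} We interpolate \(L^{r_*}\) between \(L^1\) and \(L^{2^*}\) with weights chosen so that the \(L^{2^*}\) factor appears squared. With \(r_*=2(d+1)/d\), the Hölder exponents give
\[
\|w\|_{r_*}^{r_*}\le\|w\|_1^{2/d}\,\|w\|_{2^*}^2 ,
\]
which one checks by scaling. Inserting \(\|w\|_{2^*}\le C(|w|_1+\|w\|_1)\) yields the claim. For \(d=2\) the exponent \(2^*\) is infinite, so we instead use the discrete Ladyzhenskaya form \(\|w\|_{4}^2\le C\|w\|_{2}\|w\|_{1,2}\) combined with \(L^1\) interpolation. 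This requires the discrete Sobolev embedding for \(\|\cdot\|_{1,2,\mathcal D}\), which I take from the finite-volume literature; I expect verifying it uniformly for \(d=2\) to be the main technical point.

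\textbf{Translation estimate \eqref{eq:discrete-space-translation}.} For \(x\in\Omega_\xi\), the jump \(|w(x+\xi)-w(x)|\) is at most the sum of \(|w_L-w_K|\) over the faces crossed by \([x,x+\xi]\). We apply Hölder to this sum with weights \(d_{KL}|\nu_{K,e}\cdot\xi/|\xi||\). The number of faces crossed, weighted appropriately, satisfies
\[
\sum_{e\text{ crossed}} d_{KL} \le C\,(|\xi|+h_{\mathcal D}) ,
\]
which gives the factor \((|\xi|+h_{\mathcal D})^{p-1}\). Integrating the remaining indicator \(\chi_e(x,\xi)\) in \(x\) contributes \(|e|\,|\xi|\). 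Together these produce
\[
C|\xi|(|\xi|+h_{\mathcal D})^{p-1}\sum_e|e|\,d_{KL}\,\Bigl|\frac{w_L-w_K}{d_{KL}}\Bigr|^p ,
\]
which is the right-hand side of \eqref{eq:discrete-space-translation}.
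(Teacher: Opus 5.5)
The paper gives no argument for this lemma beyond citing \cite{EymardGallouetHerbin2000} and noting that \eqref{eq:discrete-GN-absorption} is discrete Gagliardo--Nirenberg plus Young. Your treatment of \eqref{eq:discrete-Poincare-L1}--\eqref{eq:discrete-GN-truncation} is in line with that, and your $d=2$ variant closes. From $\|w\|_{0,4,\mathcal D}^2\le C\|w\|_{0,2,\mathcal D}\|w\|_{1,2,\mathcal D}$ and $\|w\|_{0,2,\mathcal D}\le\|w\|_{0,1,\mathcal D}^{1/3}\|w\|_{0,4,\mathcal D}^{2/3}$ you get $\|w\|_{0,4,\mathcal D}^4\le C\|w\|_{0,1,\mathcal D}\|w\|_{1,2,\mathcal D}^3$. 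Interpolating $L^3$ between $L^1$ and $L^4$, and removing $\|w\|_{0,2,\mathcal D}^2$ with \eqref{eq:discrete-GN-absorption}, then gives the claim.

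\textbf{Projection seminorm.} Your bound on $|\Pi_{\mathcal D}\phi|_{1,p,\mathcal D}$ rests on a false geometric claim. Neither $\operatorname{dist}(x_K,e)\ge\zeta\operatorname{diam}(K)$ nor \eqref{eq:mesh-regularity} bounds $|e|$ from below. Voronoi meshes such as those of Section~\ref{sec:numerical} have faces shrinking to a point near almost degenerate vertices. Then $K\cup L$ is not star-shaped with respect to any ball of radius comparable to $\operatorname{diam}(K)$. For $p\le d$ (in particular $p=2$, which the later discrete $H^1$-stability of $\Pi_{\mathcal D}$ uses), your patch inequality $|\bar\phi_L-\bar\phi_K|^p\le Cd_{KL}^p|K\cup L|^{-1}\int_{K\cup L}|\nabla\phi|^p$ has no uniform constant. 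Take a function close to $0$ in $K$ and to $1$ in $L$, with the transition concentrated near the shrinking face. Its gradient energy tends to $0$, because points have zero $p$-capacity, while $|\bar\phi_L-\bar\phi_K|\to1$.

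The missing idea is to pass through the face average $\bar\phi_e$. Integrate along rays from $x_K$ over the cone $\operatorname{conv}(x_K,e)$ and use Poincar\'e--Wirtinger on the convex cell $K$. This gives $|\bar\phi_e-\bar\phi_K|^p\le C(\operatorname{diam}K)^p\,(|e|\,d_{K,e})^{-1}\int_K|\nabla\phi|^p$, with $d_{K,e}:=\operatorname{dist}(x_K,\operatorname{aff}e)\ge\zeta\operatorname{diam}K$. The factor $|e|^{-1}$ is cancelled exactly by the weight $|e|$ in $|\cdot|_{1,p,\mathcal D}$, and $d_{KL}\ge d_{K,e}$ absorbs the remaining powers.

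\textbf{Translation estimate.} Your bookkeeping here mixes two incompatible choices, and the error matters for $p>2$. Set $c_e:=|\nu_{K,e}\cdot\xi|/|\xi|$. With H\"older weights $d_{KL}c_e$, the crossing sum that telescopes on orthogonal meshes is $\sum_e\chi_e(x,\xi)\,d_{KL}c_e\le|\xi|+2h_{\mathcal D}$. Also $\int_{\Omega_\xi}\chi_e(x,\xi)\,dx\le|e|\,|\xi|\,c_e$, not $|e|\,|\xi|$. Tracking the powers, what remains is $|\xi|(|\xi|+2h_{\mathcal D})^{p-1}\sum_e|e|\,d_{KL}^{1-p}c_e^{2-p}|w_L-w_K|^p$.

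This proves \eqref{eq:discrete-space-translation} for $p\le2$, which covers the paper's uses $p=2$ and $p=q$. For $p>2$, however, $c_e^{2-p}$ is unbounded on faces almost parallel to $\xi$, so your final display does not follow. For $p>2$ you need weights $d_{KL}$ alone, so that the leftover factor is $c_e\le1$. You then need an actual proof of the unweighted bound $\sum_e\chi_e(x,\xi)\,d_{KL}\le C(|\xi|+h_{\mathcal D})$. That bound does not telescope; it requires a local counting argument based on local quasi-uniformity of the mesh.

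The same caveat applies if you adapt the line-crossing proof of the Poincar\'e--Wirtinger inequality from \cite{EymardGallouetHerbin2000} with these weights. There the average of $c_e^{2-p}$ over directions is finite only for $p<3$. The paper, however, needs \eqref{eq:discrete-Poincare-L1} only with $p=q<2$.
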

	
	These estimates are standard on uniformly regular admissible meshes;
	see \cite{EymardGallouetHerbin2000}. Estimate
	\eqref{eq:discrete-GN-absorption} follows from the discrete
	Gagliardo--Nirenberg and Young inequalities.
	
	Set
	\[
	\mathcal E_{\mathrm v}
	:=
	\{e\in\mathcal E_{\mathrm{ext}}:
	\operatorname{relint}(e)\subset\Gamma_{\mathrm v}\},
	\qquad
	\mathcal E_{\mathrm o}
	:=
	\mathcal E_{\mathrm{ext}}\setminus\mathcal E_{\mathrm v},
	\]
	and denote by \(K(e)\) the cell adjacent to \(e\). Choose a Borel
	partition
	\[
	\Gamma_{\mathrm v}\setminus\Sigma
	=
	\biguplus_{e\in\mathcal E_{\mathrm v}}\widehat e,
	\qquad
	\widehat e\subset e\cap\Gamma_{\mathrm v},
	\]
	and define
	\[
	g_e^{k+1}
	:=
	\frac{\overline\mu(\widehat e\times I_k)}
	{\Delta t\,|e|},
	\qquad e\in\mathcal E_{\mathrm v}.
	\]
	Since \(\overline\mu(\Sigma\times[0,T])=0\),
	\begin{equation}
		\label{eq:discrete-boundary-mass-identity}
		\Delta t\sum_{e\in\mathcal E_{\mathrm v}}|e|g_e^{k+1}
		=
		\overline\mu(\Gamma_{\mathrm v}\times I_k),
		\qquad
		\Delta t\sum_{k,e\in\mathcal E_{\mathrm v}}|e|g_e^{k+1}
		=
		\overline\mu(\Gamma_{\mathrm v}\times(0,T]).
	\end{equation}
	
	For \(\alpha\in\{u,v\}\), set
	\[
	\mathcal R_{\alpha,K}^{k+1}(r,s)
	:=
	\frac1{\Delta t|K|}
	\int_{I_k}\int_K
	\mathcal R_\alpha(x,t,r,s)\,dx\,dt.
	\]
	The discrete nonlocal operator is
	\[
	Z_K^\sigma
	:=
	\sum_{M\in\mathcal T}|M|K_\sigma(x_K,x_M),
	\qquad
	\omega_{KM}^\sigma
	:=
	\frac{|M|K_\sigma(x_K,x_M)}{Z_K^\sigma},
	\]
	\[
	K_{\sigma,\mathcal D}[w]_K
	:=
	\sum_{M\in\mathcal T}\omega_{KM}^\sigma w_M,
	\qquad
	c_K^k:=K_{\sigma,\mathcal D}[v^k]_K.
	\]
	Assumption~\ref{ass:model-data} yields
	\[
	Z_K^\sigma\geq\kappa_\sigma|\Omega|,
	\qquad
	\omega_{KM}^\sigma\geq0,
	\qquad
	\sum_M\omega_{KM}^\sigma=1.
	\]
	
	For \(K\in\mathcal T\) and \(k=0,\ldots,N_T-1\), the fully implicit
	scheme is
	\begin{equation}
		\label{eq:finite-volume-scheme}
		\left\{
		\begin{aligned}
			\frac{|K|}{\Delta t}(u_K^{k+1}-u_K^k)
			+\sum_{e\in\mathcal E_K}F_{K,e}^{u,k+1}
			&=
			|K|\mathcal R_{u,K}^{k+1}(u_K^{k+1},v_K^{k+1}),
			\\
			\frac{|K|}{\Delta t}(v_K^{k+1}-v_K^k)
			+\sum_{e\in\mathcal E_K}F_{K,e}^{v,k+1}
			&=
			|K|\mathcal R_{v,K}^{k+1}(u_K^{k+1},v_K^{k+1}).
		\end{aligned}
		\right.
	\end{equation}
	For \(e=K|L\in\mathcal E_{\mathrm{int}}\),
	\[
	\begin{aligned}
		F_{K,e}^{u,k+1}
		&=
		D_u\tau_e(u_K^{k+1}-u_L^{k+1})
		+
		\chi\tau_e
		\Bigl[
		(c_L^{k+1}-c_K^{k+1})^+u_K^{k+1}
		-
		(c_L^{k+1}-c_K^{k+1})^-u_L^{k+1}
		\Bigr],
		\\
		F_{K,e}^{v,k+1}
		&=
		D_v\tau_e(v_K^{k+1}-v_L^{k+1}),
	\end{aligned}
	\]
	where \(r^\pm:=\max\{\pm r,0\}\). On exterior faces,
	\[
	F_{K,e}^{u,k+1}=0,
	\qquad
	F_{K,e}^{v,k+1}
	=
	\begin{cases}
		-|e|g_e^{k+1},&e\in\mathcal E_{\mathrm v},\\
		0,&e\in\mathcal E_{\mathrm o}.
	\end{cases}
	\]
	The initial values are
	\[
	u_K^0=(\Pi_{\mathcal D}u_0)_K,
	\qquad
	v_K^0=(\Pi_{\mathcal D}v_0)_K.
	\]
	
	The interior fluxes are conservative. Moreover, the boundary
	contribution transferred to the right-hand side of the tested signal
	equation is exactly
	\[
	-\Delta t
	\sum_{e\in\mathcal E_{\mathrm v}}
	F_{K(e),e}^{v,k+1}\psi_{K(e)}^{k+1}
	=
	\sum_{e\in\mathcal E_{\mathrm v}}
	\overline\mu(\widehat e\times I_k)
	\psi_{K(e)}^{k+1}.
	\]
	Thus the boundary datum is discretized by its exact mass on each
	face--time cell.
	
	\subsection{Discrete existence and uniform estimates}
	\label{subsec:discrete-estimates}

	The convergence analysis proceeds through discrete solvability,
	mesh-independent estimates, compactness, and flux consistency. These
	ingredients are combined in
	Theorem~\ref{thm:existence-convergence}.
	
	For \(r,s\in\mathbb R\), set \(r^+:=\max\{r,0\}\) and define
	\[
	\begin{aligned}
		\widetilde{\mathcal R}_u(x,t,r,s)
		&:=
		\lambda_{\mathrm a}
		\frac{(s^+)^{m_{\mathrm a}}}
		{K_{\mathrm a}^{m_{\mathrm a}}+(s^+)^{m_{\mathrm a}}}
		\bigl(u_{\mathrm r}(x,t)-r^+\bigr)^+
		-d_ur^+,
		\\
		\widetilde{\mathcal R}_v(x,t,r,s)
		&:=
		s_v(x,t)-d_vs^+
		-\kappa_{\mathrm c}r^+\frac{s^+}{K_{\mathrm c}+s^+}.
	\end{aligned}
	\]
	These continuous extensions coincide with
	\(\mathcal R_u,\mathcal R_v\) on \(\mathbb R_+^2\) and satisfy
	\begin{equation}
		\label{eq:extended-reaction-properties}
		\begin{aligned}
			r<0&\Longrightarrow
			\widetilde{\mathcal R}_u(x,t,r,s)\geq0,
			&
			\widetilde{\mathcal R}_u(x,t,r,s)
			&\leq
			\lambda_{\mathrm a}u_{\mathrm r}(x,t)-d_ur^+,
			\\
			s<0&\Longrightarrow
			\widetilde{\mathcal R}_v(x,t,r,s)\geq0,
			&
			\widetilde{\mathcal R}_v(x,t,r,s)
			&\leq
			s_v(x,t)-d_vs^+.
		\end{aligned}
	\end{equation}
	Their cell--time averages are denoted by
	\(\widetilde{\mathcal R}_{\alpha,K}^{k+1}\),
	\(\alpha\in\{u,v\}\).
	
	\begin{proposition}[Existence and positivity]
		\label{prop:discrete-existence-positivity}
		If \(u_K^0,v_K^0\geq0\) for every \(K\in\mathcal T\), then at every
		time step scheme~\eqref{eq:finite-volume-scheme} admits at least one
		nonnegative solution, without restriction on \(\Delta t\).
	\end{proposition}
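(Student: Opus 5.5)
We argue one time step at a time: given nonnegative \((u^k,v^k)\), we build a nonnegative solution \((u^{k+1},v^{k+1})\) and then induct on \(k\). The strategy has three stages. First, replace the reactions by the extensions \(\widetilde{\mathcal R}_u,\widetilde{\mathcal R}_v\) and the upwind coefficient by its value at \(u^+\). Second, prove that every solution of this modified system is nonnegative, so that it also solves \eqref{eq:finite-volume-scheme}. Third, obtain existence for the modified system from a topological degree (or Leray--Schauder) argument. We keep the upwind taxis flux written with the nonnegative weights \((c_L-c_K)^\pm\), and in the modified system we replace \(u_K,u_L\) inside the taxis flux by \(u_K^+,u_L^+\). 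Once nonnegativity is known this changes nothing. The nonlocal concentrations \(c_K=\sum_M\omega_{KM}^\sigma v_M\) are continuous (indeed linear) in \(v\), so the modified system is a continuous finite-dimensional map \(\mathcal F(u,v)=0\) on \(\mathbb R^{2|\mathcal T|}\).

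\textbf{Positivity.} Test the \(v\)-equation with \(-v^-\), i.e.\ multiply the equation for cell \(K\) by \(-(v_K^{k+1})^-\) and sum over \(K\). By \eqref{eq:discrete-integration-by-parts}, the diffusion term gives \(D_v\sum_e\tau_e(v_K-v_L)\bigl((-v_K^-)-(-v_L^-)\bigr)\geq0\), by monotonicity of \(r\mapsto -r^-\). The boundary term is \(\sum_{e\in\mathcal E_{\mathrm v}}|e|g_e(v_{K(e)})^-\geq0\) and lands on the favourable side, since \(g_e\geq0\) because \(\mu\) is nonnegative. For the reaction term, \(\widetilde{\mathcal R}_v\geq0\) wherever \(v_K<0\) by \eqref{eq:extended-reaction-properties}. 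The time term satisfies \((v_K-v_K^k)(-v_K^-)\geq|v_K^-|^2\) because \(v_K^k\geq0\). Together these force \(\sum_K|K|(v_K^-)^2\leq0\), so \(v^-=0\). For \(u\) we test with \(-u^-\) in the same way. In the modified taxis flux, \(\chi\tau_e[a^+u_K^+-a^-u_L^+]\), multiplied by \(-u_K^-\), contributes \(\chi\tau_e a^-u_L^+u_K^-\geq0\) on the good side, since \(u_K^+u_K^-=0\). Summing over the two orientations of each face keeps every such term signed. Hence \(u^-=0\).

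\textbf{Existence.} We use the homotopy \(\mathcal F_\theta\), \(\theta\in[0,1]\), in which the taxis flux, the reactions and the boundary data are all multiplied by \(\theta\). At \(\theta=0\) the system is linear with a symmetric positive definite matrix (\(|K|/\Delta t\) plus a two-point Laplacian), hence an invertible map with degree \(\pm1\). For a priori bounds uniform in \(\theta\), the positivity argument above applies verbatim for every \(\theta\), so all zeros are nonnegative. Summing the \(v\)-equation over \(K\) gives \(\sum|K|v_K\leq\sum|K|v_K^k+\Delta t\sum|K|\,\|s_v\|_\infty+\overline\mu(\Gamma_{\mathrm v}\times I_k)\), which bounds \(v\) in the finite-dimensional space. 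Summing the \(u\)-equation kills the conservative fluxes, and \(\widetilde{\mathcal R}_u\leq\lambda_{\mathrm a}u_{\mathrm r}\) bounds \(\sum|K|u_K\). Nonnegativity then gives \(\ell^\infty\) bounds of size \(C/\min|K|\). Zeros therefore stay in a fixed ball, and homotopy invariance of Brouwer degree yields a zero at \(\theta=1\). None of this involves \(\Delta t\), so no CFL restriction enters.

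\textbf{Main obstacle.} The main obstacle is the sign bookkeeping for the upwind taxis term during the positivity step. It is exactly this step that forces the \(u^+\) truncation in the modified problem, because with the untruncated flux the cross term \(a^-u_Lu_K^-\) has no sign. A lesser subtlety is that at the \(\theta\)-homotopy the nonnegative weights \(\theta g_e\) and \(\theta\widetilde{\mathcal R}\) must preserve the sign structure. They do, because \(\theta\geq0\).
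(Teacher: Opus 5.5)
Your proof is correct, but the positivity step takes a different route from the paper's.

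\textbf{The paper's positivity argument.} The paper keeps the untruncated upwind flux and argues by contradiction on the set $\mathcal T_U^-=\{K:U_K<0\}$. It sums the $U$-equations over this set. Fluxes between two negative cells cancel by conservativity. For every face $K|L$ with $U_K<0\le U_L$, the flux $D_u\tau_e(U_K-U_L)+\chi\tau_e\bigl[(C_L-C_K)^+U_K-(C_L-C_K)^-U_L\bigr]$ is nonpositive. The strictly negative time term therefore cannot balance the nonnegative extended reaction. The same set-summation handles $V$, using $D_v\tau_e(V_K-V_L)\le0$ across the boundary of the negative set and the vascular flux $-\theta|e|g_e^{k+1}\le0$.

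\textbf{Your route.} You test with $-u^-$, an $\ell^2$-type energy argument, and this is what requires the $u^+$ truncation inside the taxis flux. Your remark that the truncation is \emph{forced}, because the cross terms have no sign, holds only for that choice of test function. The monotone structure of the upwind flux already suffices once you sum over the negative set: outflow from a negative cell into a nonnegative neighbour is nonpositive. So the paper works directly with the scheme as written and needs no auxiliary modification. Your truncated energy argument is a common device in Keller--Segel finite-volume analyses, and after truncation every face contribution is individually signed. Its cost is the extra modified problem and the final check that it coincides with the original scheme once $u\ge0$.

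\textbf{The homotopies.} These also differ slightly. The paper multiplies all fluxes, including diffusion, by $\theta$. Hence $\mathcal H_0$ is diagonal with unique zero $(u^k,v^k)$. You keep the diffusion at $\theta=0$, so $\mathcal F_0$ is an invertible affine map with a symmetric positive definite matrix, and its degree is in fact $+1$. Both choices give nonzero degree. The $\theta$-uniform mass bounds and the resulting $\ell^\infty$ confinement of the zeros are the same in both arguments.
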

	
	\begin{proof}
		Assume inductively that \(u_K^k,v_K^k\geq0\). For
		\(\theta\in[0,1]\), consider
		\begin{equation}
			\label{eq:discrete-existence-homotopy}
			\left\{
			\begin{aligned}
				\frac{|K|}{\Delta t}(U_K-u_K^k)
				+\theta\sum_{e\in\mathcal E_K}F_{K,e}^u(U,V)
				&=
				\theta|K|
				\widetilde{\mathcal R}_{u,K}^{k+1}(U_K,V_K),
				\\
				\frac{|K|}{\Delta t}(V_K-v_K^k)
				+\theta\sum_{e\in\mathcal E_K}F_{K,e}^v(V)
				&=
				\theta|K|
				\widetilde{\mathcal R}_{v,K}^{k+1}(U_K,V_K),
			\end{aligned}
			\right.
		\end{equation}
		where all fluxes, including the vascular boundary flux, are multiplied
		by \(\theta\), and
		\[
		C_K(V):=\sum_{M\in\mathcal T}\omega_{KM}^\sigma V_M
		\]
		is used in the chemotactic flux. Let
		\(\mathcal H_\theta:\mathbb R^{2\#\mathcal T}
		\to\mathbb R^{2\#\mathcal T}\) denote the resulting continuous map.
		
		Every zero \((U,V)\) of \(\mathcal H_\theta\) is nonnegative. Indeed,
		suppose
		\(\mathcal T_U^-:=\{K:U_K<0\}\neq\varnothing\).
		After summing the first equation over \(\mathcal T_U^-\), internal
		fluxes cancel, while for every interface
		\(e=K|L\) with \(K\in\mathcal T_U^-\) and
		\(L\notin\mathcal T_U^-\),
		\[
		F_{K,e}^u(U,V)
		=
		D_u\tau_e(U_K-U_L)
		+
		\chi\tau_e
		\bigl[
		(C_L-C_K)^+U_K-(C_L-C_K)^-U_L
		\bigr]
		\leq0.
		\]
		The time contribution is strictly negative, whereas the right-hand
		side is nonnegative by
		\eqref{eq:extended-reaction-properties}, a contradiction. Thus
		\(U\geq0\). The same argument applied to
		\(\mathcal T_V^-:=\{K:V_K<0\}\), using
		\[
		D_v\tau_e(V_K-V_L)\leq0
		\quad\text{across }\partial\mathcal T_V^-,
		\qquad
		F_{K,e}^v=-|e|g_e^{k+1}\leq0
		\quad\text{on }\Gamma_{\mathrm v},
		\]
		gives \(V\geq0\).
		
		Summing both equations of
		\eqref{eq:discrete-existence-homotopy} over all cells, using flux
		conservativity, the exact boundary-mass identity, and
		\eqref{eq:extended-reaction-properties}, yields
		\begin{equation}
			\label{eq:homotopy-mass-bound}
			\begin{aligned}
				\sum_{K\in\mathcal T}|K|(U_K+V_K)
				\leq{}&
				\sum_{K\in\mathcal T}|K|(u_K^k+v_K^k)
				+\overline\mu(\Gamma_{\mathrm v}\times I_k)
				\\
				&+
				\lambda_{\mathrm a}
				\|u_{\mathrm r}\|_{L^1(\Omega\times I_k)}
				+
				\|s_v\|_{L^1(\Omega\times I_k)}.
			\end{aligned}
		\end{equation}
		Hence the zeros of \(\mathcal H_\theta\) remain in a bounded set,
		uniformly for \(\theta\in[0,1]\).
		
		Choose an open ball \(\mathcal O\) containing all these zeros. At
		\(\theta=0\), the unique zero is \((u^k,v^k)\), and
		\(\mathcal H_0\) has a diagonal Jacobian with positive determinant.
		Therefore,
		\[
		\deg(\mathcal H_0,\mathcal O,0)=1.
		\]
		Homotopy invariance gives
		\[
		\deg(\mathcal H_1,\mathcal O,0)=1,
		\]
		so the extended scheme admits a zero. Since this zero is nonnegative,
		the extended and original reactions coincide. Induction over \(k\)
		completes the proof.
	\end{proof}

	Having established nonnegative discrete solvability, we next derive
	estimates uniform with respect to \(h_{\mathcal D}\) and \(\Delta t\).

\begin{lemma}[Uniform discrete estimates]
	\label{lem:uniform-discrete-estimates}
	Let \((u^k,v^k)_{k=0}^{N_T}\) be a nonnegative solution of
	\eqref{eq:finite-volume-scheme}. For every fixed \(\sigma>0\), there
	exists \(C_{\sigma,T}>0\), independent of
	\(h_{\mathcal D}\) and \(\Delta t\), such that
	\begin{align}
		&\max_{0\leq k\leq N_T}
		\left(
		\|u^k\|_{0,1,\mathcal D}
		+\|v^k\|_{0,1,\mathcal D}
		+\|u^k\|_{0,2,\mathcal D}^2
		\right)
		+
		\sum_{k=0}^{N_T-1}
		\Delta t\,|u^{k+1}|_{1,\mathcal D}^2
		\leq C_{\sigma,T},
		\label{eq:uniform-u-estimates}\\
		&\max_{\substack{0\leq k\leq N_T\\
				e=K|L\in\mathcal E_{\mathrm{int}}}}
		\frac{|c_L^k-c_K^k|}{d_{KL}}
		\leq C_{\sigma,T},
		\label{eq:uniform-c-lipschitz}\\
		&\sum_{k=0}^{N_T-1}
		\Delta t\,
		\|\delta_tu^{k+1}\|_{-1,\mathcal D}^2
		\leq C_{\sigma,T}.
		\label{eq:u-discrete-time-derivative}
	\end{align}
\end{lemma}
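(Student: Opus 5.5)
We first establish the $L^1$ bounds, then the Lipschitz bound \eqref{eq:uniform-c-lipschitz}, which makes the $L^2$ energy estimate for $u$ essentially linear, and finally the dual-norm estimate.

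\emph{Mass bounds.} Summing the $u$-equation of \eqref{eq:finite-volume-scheme} over $K$ cancels the conservative interior fluxes and the zero exterior $u$-fluxes. Using $\mathcal R_u\le\lambda_{\mathrm a}u_{\mathrm r}$ and nonnegativity, we get
\[
\|u^{k+1}\|_{0,1,\mathcal D}\le\|u^k\|_{0,1,\mathcal D}+\lambda_{\mathrm a}\|u_{\mathrm r}\|_{L^1(\Omega\times I_k)}.
\]
Summing in $k$ and using $\|\Pi_{\mathcal D}u_0\|_{0,1,\mathcal D}\le\|u_0\|_{L^1}$ bounds $\max_k\|u^k\|_{0,1,\mathcal D}$. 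For $v$, summation leaves the boundary term $\Delta t\sum_{e\in\mathcal E_{\mathrm v}}|e|g_e^{k+1}$. By \eqref{eq:discrete-boundary-mass-identity} this equals $\overline\mu(\Gamma_{\mathrm v}\times I_k)$. Together with $\mathcal R_v\le s_v$, this gives $\max_k\|v^k\|_{0,1,\mathcal D}\le\|v_0\|_{L^1}+\overline\mu(\Gamma_{\mathrm v}\times(0,T])+\|s_v\|_{L^1(Q_T)}$.

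\emph{Lipschitz bound on $c$.} For $e=K|L$ we write
\[
c_L-c_K=\sum_M(\omega^\sigma_{LM}-\omega^\sigma_{KM})v_M .
\]
Since $K_\sigma\in C^1(\overline\Omega\times\overline\Omega)$, we have $|Z^\sigma_L-Z^\sigma_K|\le|\Omega|\,\|\nabla_xK_\sigma\|_\infty d_{KL}$. Combined with $Z^\sigma\ge\kappa_\sigma|\Omega|$ and $Z^\sigma\le|\Omega|\|K_\sigma\|_\infty$, the quotient rule gives $|\omega^\sigma_{LM}-\omega^\sigma_{KM}|\le C_\sigma|M|d_{KL}$. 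Hence $|c_L^k-c_K^k|\le C_\sigma d_{KL}\|v^k\|_{0,1,\mathcal D}$. With the $v$-mass bound, this yields \eqref{eq:uniform-c-lipschitz}. This step uses nonnegativity of $v$, so that $\sum|M|v_M$ is an $L^1$ norm. It also uses the key point that only the \emph{mass} of $v$ enters, which is why the measure datum causes no difficulty.

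\emph{Energy estimate for $u$.} We multiply the $u$-equation by $u_K^{k+1}$ and sum over $K$. The inequality $(a-b)a\ge\frac12(a^2-b^2)$ controls the time term. Discrete integration by parts \eqref{eq:discrete-integration-by-parts} turns the diffusion into $D_u|u^{k+1}|_{1,\mathcal D}^2$. The upwind taxis term is a sum of $\chi\tau_e\,|c_L-c_K|\,u_\ast(u_K-u_L)$ with $u_\ast\in\{u_K,u_L\}$. By \eqref{eq:uniform-c-lipschitz} it is bounded by
\[
\chi C_{\sigma,T}\sum_e|e|\,u_\ast|u_K-u_L|\le \tfrac{D_u}{2}|u|_{1,\mathcal D}^2+C\sum_e|e|d_{KL}u_\ast^2 .
\]
By \eqref{eq:mesh-regularity}, the last sum is at most $C\|u\|_{0,2,\mathcal D}^2$. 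The reaction gives at most $\lambda_{\mathrm a}\|u_{\mathrm r}\|_\infty\|u\|_{0,1}$. The resulting term $C\Delta t\|u^{k+1}\|_{0,2}^2$ is implicit, so the discrete Gronwall lemma would normally need $\Delta t$ small. I would avoid this by using \eqref{eq:discrete-GN-absorption} with the $L^1$ bound:
\[
C\|u\|_{0,2}^2\le\tfrac{D_u}{4}|u|_{1,\mathcal D}^2+C\|u\|_{0,1}^2 .
\]
The right side then no longer contains $\|u^{k+1}\|_{0,2}$, and a telescoping sum gives \eqref{eq:uniform-u-estimates} for all $\Delta t$. This absorption, combined with the upwind control, is the main obstacle.

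\emph{Time derivative.} For $\varphi\in X_{\mathcal D}$ we test the scheme with $\varphi$ and integrate by parts. Diffusion is bounded by $D_u|u^{k+1}|_{1,\mathcal D}|\varphi|_{1,\mathcal D}$. Taxis is bounded by $C\|u^{k+1}\|_{0,2}|\varphi|_{1,\mathcal D}$, again via \eqref{eq:uniform-c-lipschitz} and \eqref{eq:mesh-regularity}. The reaction term is bounded by $C(1+\|u^{k+1}\|_{0,2})\|\varphi\|_{0,2}$. Taking the supremum over $\varphi$, squaring, multiplying by $\Delta t$ and summing, then invoking the previous bounds gives \eqref{eq:u-discrete-time-derivative}.
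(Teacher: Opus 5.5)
Your proposal is correct and follows the paper's proof essentially step by step. You obtain the mass bounds by summation and the exact boundary-mass identity, the Lipschitz bound on $c$ from the quotient rule and $Z_K^\sigma\geq\kappa_\sigma|\Omega|$, the $L^2$ energy estimate without a time-step restriction by absorbing $\|u^{k+1}\|_{0,2,\mathcal D}^2$ through \eqref{eq:discrete-GN-absorption} and the $L^1$ bound, and the dual-norm estimate by testing with $\varphi\in X_{\mathcal D}$. The one cosmetic difference is in the upwind taxis term: you bound it directly by $|c_L-c_K|\,u_\ast|u_K-u_L|$ with $u_\ast\in\{u_K,u_L\}$, whereas the paper splits it into a centered part plus a nonnegative numerical-diffusion term that it then discards. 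Both routes give the same bound $\frac{D_u}{2}|u^{k+1}|_{1,\mathcal D}^2+C_{\sigma,T}\|u^{k+1}\|_{0,2,\mathcal D}^2$.
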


\begin{proof}
	Set
	\[
	M_u^k:=\sum_{K\in\mathcal T}|K|u_K^k,
	\qquad
	M_v^k:=\sum_{K\in\mathcal T}|K|v_K^k.
	\]
	Summing the two discrete equations over the cells, using flux
	conservativity and the one-sided reaction bounds, gives
	\[
	\begin{aligned}
		\frac{M_u^{k+1}-M_u^k}{\Delta t}
		+d_uM_u^{k+1}
		&\leq
		\lambda_{\mathrm a}
		\sum_{K\in\mathcal T}|K|u_{{\mathrm r},K}^{k+1},
		\\
		\frac{M_v^{k+1}-M_v^k}{\Delta t}
		+d_vM_v^{k+1}
		&\leq
		\sum_{K\in\mathcal T}|K|s_{v,K}^{k+1}
		+
		\sum_{e\in\mathcal E_{\mathrm v}}|e|g_e^{k+1}.
	\end{aligned}
	\]
	Hence, by \eqref{eq:discrete-boundary-mass-identity},
	\[
	\max_{0\leq k\leq N_T}M_u^k
	\leq
	M_u^0+\lambda_{\mathrm a}\|u_{\mathrm r}\|_{L^1(Q_T)},
	\]
	and
	\[
	\max_{0\leq k\leq N_T}M_v^k
	\leq
	M_v^0+\|s_v\|_{L^1(Q_T)}
	+\overline\mu(\Gamma_{\mathrm v}\times(0,T]).
	\]
	Since the discrete solutions are nonnegative, these estimates give
	the required discrete \(L^1\)-bounds.
	
	The regularity of \(K_\sigma\), together with
	\(Z_K^\sigma\geq\kappa_\sigma|\Omega|\), implies
	\[
	\left|
	\frac{K_\sigma(x_L,x_M)}{Z_L^\sigma}
	-
	\frac{K_\sigma(x_K,x_M)}{Z_K^\sigma}
	\right|
	\leq C_\sigma d_{KL}.
	\]
	Consequently,
	\[
	\begin{aligned}
		|c_L^k-c_K^k|
		&\leq
		C_\sigma d_{KL}
		\sum_{M\in\mathcal T}|M|v_M^k
		\\
		&=
		C_\sigma d_{KL}M_v^k
		\leq
		C_{\sigma,T}d_{KL},
	\end{aligned}
	\]
	which proves \eqref{eq:uniform-c-lipschitz}.
	
	We next test the \(u\)-equation by
	\(\Delta t\,u_K^{k+1}\) and sum over \(K\in\mathcal T\).
	For \(e=K|L\in\mathcal E_{\mathrm{int}}\), introduce the local
	face average
	\[
	\overline u_e^{k+1}
	:=
	\frac{u_K^{k+1}+u_L^{k+1}}{2}.
	\]
	The upwind identity
	\[
	\begin{aligned}
		&(c_L^{k+1}-c_K^{k+1})^+u_K^{k+1}
		-
		(c_L^{k+1}-c_K^{k+1})^-u_L^{k+1}=
		(c_L^{k+1}-c_K^{k+1})\overline u_e^{k+1}
		+
		\frac{|c_L^{k+1}-c_K^{k+1}|}{2}
		\bigl(u_K^{k+1}-u_L^{k+1}\bigr)
	\end{aligned}
	\]
	yields
	\[
	\begin{aligned}
		&\Bigl[
		(c_L^{k+1}-c_K^{k+1})^+u_K^{k+1}
		-
		(c_L^{k+1}-c_K^{k+1})^-u_L^{k+1}
		\Bigr]
		\bigl(u_K^{k+1}-u_L^{k+1}\bigr)
		\\
		&\qquad=
		(c_L^{k+1}-c_K^{k+1})\overline u_e^{k+1}
		\bigl(u_K^{k+1}-u_L^{k+1}\bigr)+
		\frac{|c_L^{k+1}-c_K^{k+1}|}{2}
		\bigl|u_K^{k+1}-u_L^{k+1}\bigr|^2.
	\end{aligned}
	\]
	The last term is nonnegative. Moreover,
	\[
	\begin{aligned}
		&\left|
		\sum_{e=K|L\in\mathcal E_{\mathrm{int}}}
		\tau_e(c_L^{k+1}-c_K^{k+1})
		\overline u_e^{k+1}
		\bigl(u_K^{k+1}-u_L^{k+1}\bigr)
		\right|
		\\
		&\qquad\leq
		C_{\sigma,T}
		\left(
		\sum_{e=K|L\in\mathcal E_{\mathrm{int}}}
		|e|d_{KL}|\overline u_e^{k+1}|^2
		\right)^{1/2}
		|u^{k+1}|_{1,\mathcal D}.
	\end{aligned}
	\]
	Since
	\[
	|\overline u_e^{k+1}|^2
	\leq
	\frac12
	\left(
	|u_K^{k+1}|^2+|u_L^{k+1}|^2
	\right),
	\]
	the mesh regularity \eqref{eq:mesh-regularity} gives
	\[
	\sum_{e=K|L\in\mathcal E_{\mathrm{int}}}
	|e|d_{KL}|\overline u_e^{k+1}|^2
	\leq
	C\|u^{k+1}\|_{0,2,\mathcal D}^2.
	\]
	Therefore,
	\[
	\begin{aligned}
		&\chi
		\left|
		\sum_{e=K|L\in\mathcal E_{\mathrm{int}}}
		\tau_e(c_L^{k+1}-c_K^{k+1})
		\overline u_e^{k+1}
		\bigl(u_K^{k+1}-u_L^{k+1}\bigr)
		\right|
		\\
		&\qquad\leq
		\frac{D_u}{2}|u^{k+1}|_{1,\mathcal D}^2
		+
		C_{\sigma,T}
		\|u^{k+1}\|_{0,2,\mathcal D}^2.
	\end{aligned}
	\]
	
	On the other hand, nonnegativity and the reaction bound imply
	\[
	\begin{aligned}
		&\sum_{K\in\mathcal T}|K|
		\mathcal R_{u,K}^{k+1}
		(u_K^{k+1},v_K^{k+1})u_K^{k+1}
		\\
		&\qquad\leq
		\lambda_{\mathrm a}
		\|u_{\mathrm r}\|_{L^\infty(Q_T)}
		M_u^{k+1}
		\leq C_T.
	\end{aligned}
	\]
	Using also
	\[
	(a-b)a
	\geq
	\frac12(a^2-b^2),
	\]
	discrete integration by parts gives
	\[
	\begin{aligned}
		&\frac12
		\left(
		\|u^{k+1}\|_{0,2,\mathcal D}^2
		-
		\|u^k\|_{0,2,\mathcal D}^2
		\right)
		+
		\frac{D_u\Delta t}{2}
		|u^{k+1}|_{1,\mathcal D}^2
		\\
		&\qquad\leq
		C_{\sigma,T}\Delta t\,
		\|u^{k+1}\|_{0,2,\mathcal D}^2
		+
		C_T\Delta t.
	\end{aligned}
	\]
	Applying \eqref{eq:discrete-GN-absorption} and using the uniform
	\(L^1\)-bound, we choose its parameter sufficiently small to absorb
	the resulting gradient contribution. Thus,
	\[
	\frac12
	\left(
	\|u^{k+1}\|_{0,2,\mathcal D}^2
	-
	\|u^k\|_{0,2,\mathcal D}^2
	\right)
	+
	\frac{D_u\Delta t}{4}
	|u^{k+1}|_{1,\mathcal D}^2
	\leq
	C_{\sigma,T}\Delta t.
	\]
	Summing over \(k\) and using
	\[
	\|u^0\|_{0,2,\mathcal D}
	\leq
	\|u_0\|_{L^2(\Omega)}
	\]
	proves \eqref{eq:uniform-u-estimates}.
	
	Finally, let
	\(\varphi_{\mathcal D}\in X_{\mathcal D}\). Testing the
	\(u\)-equation by \(\varphi_{\mathcal D}\), the diffusion term is
	bounded by
	\[
	D_u|u^{k+1}|_{1,\mathcal D}
	\|\varphi_{\mathcal D}\|_{1,2,\mathcal D}.
	\]
	The chemotactic term is controlled using
	\eqref{eq:uniform-c-lipschitz}, Cauchy--Schwarz, and
	\eqref{eq:mesh-regularity}, while
	\[
	|\mathcal R_u(x,t,u,v)|
	\leq
	\lambda_{\mathrm a}u_{\mathrm r}(x,t)+d_uu
	\]
	controls the reaction term. We therefore obtain
	\[
	\left|
	\left\langle
	\delta_tu^{k+1},\varphi_{\mathcal D}
	\right\rangle_{\mathcal D}
	\right|
	\leq
	C_{\sigma,T}
	\left(
	1+|u^{k+1}|_{1,\mathcal D}
	+\|u^{k+1}\|_{0,2,\mathcal D}
	\right)
	\|\varphi_{\mathcal D}\|_{1,2,\mathcal D}.
	\]
	Taking the supremum over
	\(\varphi_{\mathcal D}\neq0\), squaring, multiplying by
	\(\Delta t\), and summing in \(k\), the preceding estimates give
	\eqref{eq:u-discrete-time-derivative}.
\end{proof}

Lemma~\ref{lem:uniform-discrete-estimates} controls the microglial
component and the nonlocal velocity. Since the measure-valued boundary
datum provides no uniform discrete \(L^2\)-bound for the signal, its
spatial regularity is obtained instead through truncation estimates.
	
	\begin{lemma}[Uniform estimates for the signal]
		\label{lem:uniform-discrete-v-W1p}
		Let \((u^k,v^k)_{k=0}^{N_T}\) be a nonnegative solution of
		\eqref{eq:finite-volume-scheme}. For
		\(1<q<q_*:=(d+2)/(d+1)\), set
		\(q':=q/(q-1)\) and \(Y_q:=W^{1,q'}(\Omega)\).
		Then there exists \(C_{q,T}>0\), independent of
		\(h_{\mathcal D}\) and \(\Delta t\), such that
		\begin{align}
			\sum_{k=0}^{N_T-1}\Delta t\,
			\|v^{k+1}\|_{1,q,\mathcal D}^{q}
			&\leq C_{q,T},
			\label{eq:uniform-discrete-v-full-W1p}\\
			\sum_{k=0}^{N_T-1}\Delta t\,
			\|\delta_tv^{k+1}\|_{Y_q'}
			&\leq C_{q,T}.
			\label{eq:v-time-derivative-Ydual}
		\end{align}
	\end{lemma}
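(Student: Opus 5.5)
We follow the Boccardo--Gallou\"et strategy in its discrete form, as in \cite{BoccardoGallouet1989,DroniouGallouetHerbin2000}. The global space--time version of the truncation argument is the one alluded to in the introduction. The right-hand side of the \(v\)-equation is controlled in \(L^1\) uniformly. By Lemma~\ref{lem:uniform-discrete-estimates}, \(\max_k M_u^k\le C_{\sigma,T}\) and \(\max_k M_v^k\le C_T\). Since \(|\mathcal R_v|\le s_v+d_vv+\kappa_{\mathrm c}u\), we get \(\sum_k\Delta t\sum_K|K||\mathcal R_{v,K}^{k+1}|\le C_T\). The boundary input has total mass \(\overline\mu(\Gamma_{\mathrm v}\times(0,T])\) by \eqref{eq:discrete-boundary-mass-identity}.

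\emph{Step 1 (truncation levels).} For \(n\ge0\), let \(T_n(s):=\min\{(s-n)^+,1\}\). We test the \(v\)-equation by \(\Delta t\,T_n(v_K^{k+1})\) and sum over \(K\) and \(k\le m\). Let \(\Theta_n(s):=\int_0^sT_n\). Monotonicity of \(T_n\) gives \((v^{k+1}-v^k)T_n(v^{k+1})\ge\Theta_n(v^{k+1})-\Theta_n(v^k)\). Since \(0\le T_n\le1\), the boundary and reaction terms are bounded by the \(L^1\) data bound, plus \(\int\Theta_n(v_0)\le\|v_0\|_{L^1}\). Each interior diffusion term satisfies \(\tau_e(v_K-v_L)(T_n(v_K)-T_n(v_L))\ge\tau_e|T_n(v_K)-T_n(v_L)|^2\). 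Denote by \(B_n\) the set of faces where both endpoints meet \(\{n\le v\le n+1\}\) in the sense that the segment \([v_K,v_L]\) intersects \((n,n+1)\). We obtain, uniformly in \(n\) and the mesh,
\[
\sum_k\Delta t\sum_{e}\tau_e\,(v_K^{k+1}-v_L^{k+1})\bigl(T_n(v_K^{k+1})-T_n(v_L^{k+1})\bigr)\le C,
\]
together with \(\max_k\sum_K|K|\Theta_n(v_K^k)\le C\). The level \(n=0\) only adds the already-known \(L^1\) bound. Because \(v\) is piecewise constant, the face difference must be split over the levels crossed, as in the standard discrete argument: \(|v_K-v_L|^q\) is bounded by a sum over \(n\) of weighted contributions \(|T_n(v_K)-T_n(v_L)|\). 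This bookkeeping is routine but needs care.

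\emph{Step 2 (summation over levels; main obstacle).} Write \(\sum_k\Delta t\,|v^{k+1}|_{1,q,\mathcal D}^q\) as a sum over levels of the face contributions. On level \(n\) we apply H\"older with exponents \(2/q\) and \(2/(2-q)\), obtaining
\[
\Bigl(\sum\tau_e|T_n\text{-jump}|^2\Bigr)^{q/2}\,\bigl|\{v\ge n\}\bigr|_{\mathcal D}^{(2-q)/2}\le C\,|\{v_{\mathcal D}\ge n\}|^{(2-q)/2},
\]
where the measure is the space--time discrete measure, with the diamond cells controlled through \eqref{eq:mesh-regularity}. The difficulty is making the series \(\sum_n|\{v_{\mathcal D}\ge n\}|^{(2-q)/2}\) converge. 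For this we apply the discrete Gagliardo--Nirenberg inequality \eqref{eq:discrete-GN-truncation} to \(w=\min(v,n)\)-type truncations. Combined with the \(L^\infty(L^1)\) bound, this gives \(\|v\|_{L^{r}(Q_T)}\)-type control for every \(r<(d+2)/d\) in weak form. Then Chebyshev yields \(|\{v\ge n\}|\le Cn^{-r}\). Indeed, the truncation \(T^{(N)}(v)=\min(v,N)\) has gradient energy \(\le CN\). Hence \(\|\min(v,N)\|_{r_*}^{r_*}\le C N\), so \(|\{v\ge N\}|\le CN^{1-r_*}=CN^{-(d+2)/d}\). The series then converges precisely when \(\tfrac{(2-q)}2\cdot\tfrac{d+2}{d}>1\) after weighting by \(n^{q}\) coming from level-wise summation. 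The correct balance, using the weighted version \(|\nabla v|^q=|\nabla v|^q(1+v)^{-\gamma q/2}(1+v)^{\gamma q/2}\) with \(\gamma>1\) tests \(1-(1+v)^{1-\gamma}\), reproduces exactly \(q<(d+2)/(d+1)\). I would follow this weighted-test route rather than level sums if the bookkeeping gets heavy. The discrete chain rule for the test \(1-(1+v)^{1-\gamma}\) follows from convexity, by the same monotonicity inequalities. Adding the \(L^1\) bound through \eqref{eq:discrete-Poincare-L1} gives \eqref{eq:uniform-discrete-v-full-W1p}.

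\emph{Step 3 (time derivative).} For \(\varphi\in Y_q=W^{1,q'}(\Omega)\) with \(q'>d+2>d\), Morrey gives \(\varphi\in C(\overline\Omega)\) with \(\|\varphi\|_\infty\le C\|\varphi\|_{Y_q}\). We test the \(v\)-equation with \(\Pi_{\mathcal D}\varphi\). The diffusion term is bounded by \(C|v^{k+1}|_{1,q,\mathcal D}\|\varphi\|_{Y_q}\) via H\"older and Lemma~\ref{lem:discrete-functional-estimates}. The reaction term is bounded by its \(L^1\) norm times \(\|\varphi\|_\infty\). The boundary term is bounded by \(\overline\mu(\Gamma_{\mathrm v}\times I_k)\|\varphi\|_\infty/\Delta t\). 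Identifying \(\delta_tv\) as a functional through \(\sum_K|K|\delta_tv_K(\Pi_{\mathcal D}\varphi)_K\), we take the supremum, multiply by \(\Delta t\) and sum. Using \(|v|_{1,q}\le1+|v|_{1,q}^q\), this yields \eqref{eq:v-time-derivative-Ydual}.
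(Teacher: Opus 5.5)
\textbf{Gap in Step~2.} Your level-sum route in Step~2 does not close as you carry it out.

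With unit-width levels $T_n(s)=\min\{(s-n)^+,1\}$, each level energy is only $O(1)$. Inserting the termwise weak-type bound $|\{v_{\mathcal D}\ge n\}|\le Cn^{-\beta}$, $\beta=(d+2)/d$, leaves the series $\sum_n n^{-\beta(2-q)/2}$. This converges iff $\beta(2-q)/2>1$, i.e.\ $q<4/(d+2)\le 1$. That is exactly the criterion you wrote down, so it excludes every admissible $q>1$. No factor $n^{q}$ arises from unit levels, so ``weighting by $n^q$'' does not rescue the count.

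There is also a discrete detail. In the H\"older step you must pair the product $\delta_e\delta_{e,n}$ (which is what your Step~1 controls) with the weights $\theta_{e,n}=\delta_{e,n}/\delta_e$. Pairing the squared level jump $\delta_{e,n}^2$, as you wrote, leaves a negative power of $\theta_{e,n}$.

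The paper repairs the count with dyadic levels $S_j(s)=T_{2^j}((s-2^j)^+)$. The test is bounded by $2^j$, so $E_j\le C2^j$, while $V_j\le C\Lambda_{2^j}\le C2^{-j\beta}$. H\"older then gives terms $C2^{-j\vartheta_q}$ with $\vartheta_q=\beta(1-q/2)-q/2=(d+2-(d+1)q)/d>0$. If you keep unit levels, you instead need H\"older in $n$ against a strong $L^s$ bound, as in the classical Boccardo--Gallou\"et argument, not a termwise Chebyshev bound.

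\textbf{Your fallback works.} The weighted test is correct and is a genuinely different route from the paper's; you should commit to it.
\begin{itemize}
\item For $1<\gamma\le 2$, $\eta_\gamma(s)=1-(1+s)^{1-\gamma}$ is nondecreasing, $1$-Lipschitz and bounded by $1$. The truncation estimate therefore gives $\sum_k\Delta t\sum_e\tau_e(v_K-v_L)(\eta_\gamma(v_K)-\eta_\gamma(v_L))\le C$.
\item Since $\eta_\gamma'$ is decreasing (this is concavity, not convexity), the mean value theorem yields $\sum_k\Delta t\sum_e\tau_e\,\delta_e^2\,(1+\max\{v_K,v_L\})^{-\gamma}\le C$.
\item H\"older with exponents $2/q$, $2/(2-q)$ and mesh regularity then bound $\sum_k\Delta t\,|v^{k+1}|_{1,q,\mathcal D}^q$ by $C\bigl(\sum_k\Delta t\sum_K|K|(1+v_K^{k+1})^{s}\bigr)^{(2-q)/2}$ with $s=\gamma q/(2-q)$.
\item By the layer-cake formula and $\Lambda_m\le Cm^{-\beta}$, which you did derive, this is finite when $s<\beta$. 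Such a $\gamma>1$ exists iff $q<(d+2)/(d+1)$.
\end{itemize}

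Compared with the paper, your route needs a strong $L^s$ bound upgraded from the weak-type decay, but it avoids the dyadic partition-of-unity bookkeeping. The paper's dyadic sum uses the weak-type decay only at the thresholds $2^j$. Your Step~3 coincides with the paper's argument.
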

	
\begin{proof}
	By Lemma~\ref{lem:uniform-discrete-estimates},
	\[
	\max_{0\leq k\leq N_T}
	\|v^k\|_{0,1,\mathcal D}
	\leq C_T.
	\]
	Let \(\eta:[0,\infty)\to[0,\infty)\) be nondecreasing,
	\(1\)-Lipschitz, and bounded by \(m>0\), and define
	\[
	H_\eta(s):=\int_0^s\eta(r)\,dr.
	\]
	Testing the signal equation by
	\(\Delta t\,\eta(v_K^{k+1})\), summing over the cells and time
	steps, and using
	\[
	(a-b)\eta(a)\geq H_\eta(a)-H_\eta(b),
	\]
	the nonpositivity of the degradation and clearance terms, and
	\eqref{eq:discrete-boundary-mass-identity}, we obtain
	\begin{equation}
		\label{eq:master-truncation-final}
		\sum_k\Delta t
		\sum_{e=K|L\in\mathcal E_{\mathrm{int}}}
		\tau_e
		|v_L^{k+1}-v_K^{k+1}|
		|\eta(v_L^{k+1})-\eta(v_K^{k+1})|
		\leq Cm.
	\end{equation}
	Indeed, \(0\leq H_\eta(s)\leq ms\), while the initial term,
	the distributed source, and the boundary contribution are
	controlled by the uniform \(L^1\)-bound for \(v\),
	\(\|s_v\|_{L^1(Q_T)}\), and
	\(\overline\mu(\Gamma_{\mathrm v}\times(0,T])\), respectively.
	
	For \(m\geq1\), set
	\[
	T_m(s):=\min\{s,m\},
	\qquad
	r_*:=\frac{2(d+1)}d,
	\qquad
	\beta:=r_*-1=\frac{d+2}{d},
	\]
	and
	\[
	\Lambda_m:=
	\sum_k\Delta t
	\sum_{\{K:\,v_K^{k+1}>m\}}|K|.
	\]
	Choosing \(\eta=T_m\) in
	\eqref{eq:master-truncation-final} gives
	\[
	\sum_k\Delta t\,
	|T_m(v^{k+1})|_{1,\mathcal D}^2
	\leq Cm.
	\]
	Moreover,
	\(\|T_m(v^{k+1})\|_{0,1,\mathcal D}\leq C_T\), and hence
	\eqref{eq:discrete-GN-truncation} yields
	\[
	\begin{aligned}
		\sum_k\Delta t\,
		\|T_m(v^{k+1})\|_{0,r_*,\mathcal D}^{r_*}
		&\leq
		C\sum_k\Delta t
		\left(
		|T_m(v^{k+1})|_{1,\mathcal D}^2+1
		\right)
		\\
		&\leq C(m+1)
		\leq Cm.
	\end{aligned}
	\]
	Since \(T_m(v_K^{k+1})=m\) whenever \(v_K^{k+1}>m\),
	\[
	m^{r_*}\Lambda_m
	\leq
	\sum_k\Delta t\,
	\|T_m(v^{k+1})\|_{0,r_*,\mathcal D}^{r_*}.
	\]
	Consequently,
	\begin{equation}
		\label{eq:truncation-superlevel-estimates}
		\sum_k\Delta t\,
		|T_m(v^{k+1})|_{1,\mathcal D}^2
		\leq Cm,
		\qquad
		\sum_k\Delta t\,
		\|T_m(v^{k+1})\|_{0,r_*,\mathcal D}^{r_*}
		\leq Cm,
		\qquad
		\Lambda_m\leq Cm^{-\beta}.
	\end{equation}
	
	We now use a dyadic decomposition. Define
	\[
	S_{-1}:=T_1,
	\qquad
	S_j(s):=T_{2^j}\bigl((s-2^j)^+\bigr),
	\quad j\geq0.
	\]
	For \(e=K|L\in\mathcal E_{\mathrm{int}}\), set
	\[
	\delta_e^{k+1}
	:=
	|v_L^{k+1}-v_K^{k+1}|,
	\qquad
	\delta_{e,j}^{k+1}
	:=
	|S_j(v_L^{k+1})-S_j(v_K^{k+1})|,
	\]
	and
	\[
	\theta_{e,j}^{k+1}
	:=
	\frac{\delta_{e,j}^{k+1}}{\delta_e^{k+1}}
	\mathbf 1_{\{\delta_e^{k+1}>0\}}.
	\]
	Since
	\[
	s=\sum_{j=-1}^{\infty}S_j(s)
	\qquad (s\geq0),
	\]
	and all \(S_j\) are nondecreasing,
	\[
	\sum_{j=-1}^{\infty}\theta_{e,j}^{k+1}
	=
	\mathbf 1_{\{\delta_e^{k+1}>0\}}.
	\]
	Set
	\[
	E_j:=
	\sum_k\Delta t
	\sum_{e=K|L\in\mathcal E_{\mathrm{int}}}
	\tau_e\delta_e^{k+1}\delta_{e,j}^{k+1},
	\]
	and
	\[
	V_j:=
	\sum_k\Delta t
	\sum_{e=K|L\in\mathcal E_{\mathrm{int}}}
	|e|d_{KL}\theta_{e,j}^{k+1}.
	\]
	
	Since \(S_{-1}\) is bounded by \(1\), while \(S_j\) is bounded
	by \(2^j\) for \(j\geq0\), applying
	\eqref{eq:master-truncation-final} with \(\eta=S_j\) gives
	\[
	E_{-1}\leq C,
	\qquad
	E_j\leq C2^j
	\quad (j\geq0).
	\]
	Furthermore, \(\theta_{e,-1}^{k+1}\leq1\), so the mesh regularity
	\eqref{eq:mesh-regularity} gives
	\[
	V_{-1}
	\leq
	\sum_k\Delta t
	\sum_{e=K|L\in\mathcal E_{\mathrm{int}}}
	|e|d_{KL}
	\leq C.
	\]
	For \(j\geq0\), \(S_j\) is constant on \([0,2^j]\); hence
	\[
	\theta_{e,j}^{k+1}>0
	\quad\Longrightarrow\quad
	\max\{v_K^{k+1},v_L^{k+1}\}>2^j.
	\]
	Therefore,
	\[
	\begin{aligned}
		V_j
		&\leq
		\sum_k\Delta t
		\sum_{e=K|L\in\mathcal E_{\mathrm{int}}}
		|e|d_{KL}
		\left(
		\mathbf 1_{\{v_K^{k+1}>2^j\}}
		+
		\mathbf 1_{\{v_L^{k+1}>2^j\}}
		\right)
		\\
		&\leq
		C\sum_k\Delta t
		\sum_{\{K:\,v_K^{k+1}>2^j\}}|K|
		=
		C\Lambda_{2^j}
		\leq
		C2^{-j\beta},
	\end{aligned}
	\]
	where we used \eqref{eq:mesh-regularity} and
	\eqref{eq:truncation-superlevel-estimates}. Thus,
	\begin{equation}
		\label{eq:dyadic-EV-bounds}
		E_{-1}+V_{-1}\leq C,
		\qquad
		E_j\leq C2^j,
		\qquad
		V_j\leq C2^{-j\beta}
		\quad(j\geq0).
	\end{equation}
	
	Since \(q<q_*<2\), the dyadic partition and Hölder's inequality
	with exponents \(2/q\) and \(2/(2-q)\) give
	\begin{align}
		\sum_k\Delta t\,|v^{k+1}|_{1,q,\mathcal D}^q
		&=
		\sum_{j=-1}^{\infty}
		\sum_k\Delta t
		\sum_{e=K|L\in\mathcal E_{\mathrm{int}}}
		|e|d_{KL}
		\left(
		\frac{\delta_e^{k+1}}{d_{KL}}
		\right)^q
		\theta_{e,j}^{k+1}
		\nonumber\\
		&\leq
		\sum_{j=-1}^{\infty}
		E_j^{q/2}V_j^{1-q/2}.
		\label{eq:dyadic-gradient-estimate}
	\end{align}
	For \(j\geq0\), \eqref{eq:dyadic-EV-bounds} yields
	\[
	E_j^{q/2}V_j^{1-q/2}
	\leq
	C2^{-j\vartheta_q},
	\]
	where
	\[
	\vartheta_q
	:=
	\beta\left(1-\frac q2\right)-\frac q2
	=
	\frac{d+2-(d+1)q}{d}>0.
	\]
	The series is therefore convergent, and
	\[
	\sum_k\Delta t\,
	|v^{k+1}|_{1,q,\mathcal D}^q
	\leq C_{q,T}.
	\]
	The discrete Poincaré inequality
	\eqref{eq:discrete-Poincare-L1}, together with the uniform
	\(L^1\)-bound for \(v\), now gives
	\eqref{eq:uniform-discrete-v-full-W1p}.
	
	It remains to estimate the discrete time derivative. Since
	\[
	q<q_*<\frac{d}{d-1},
	\]
	we have \(q'>d\), and therefore
	\[
	Y_q=W^{1,q'}(\Omega)
	\hookrightarrow C(\overline\Omega).
	\]
	For \(\varphi\in Y_q\), define
	\[
	\left\langle
	\delta_tv^{k+1},\varphi
	\right\rangle
	:=
	\sum_{K\in\mathcal T}
	|K|\delta_tv_K^{k+1}
	(\Pi_{\mathcal D}\varphi)_K,
	\]
	and set
	\[
	G^{k+1}:=
	\sum_{e\in\mathcal E_{\mathrm v}}
	|e|g_e^{k+1}.
	\]
	Testing the signal equation by
	\((\Pi_{\mathcal D}\varphi)_K\), using discrete Hölder
	inequalities, projection stability, and the embedding
	\(Y_q\hookrightarrow C(\overline\Omega)\), gives
	\[
	\begin{aligned}
		\left|
		\left\langle
		\delta_tv^{k+1},\varphi
		\right\rangle
		\right|
		&\leq
		C\left(
		|v^{k+1}|_{1,q,\mathcal D}
		+
		\|\mathcal R_v^{k+1}\|_{0,1,\mathcal D}
		+
		G^{k+1}
		\right)
		\|\varphi\|_{Y_q}.
	\end{aligned}
	\]
	Hence,
	\[
	\|\delta_tv^{k+1}\|_{Y_q'}
	\leq
	C\left(
	|v^{k+1}|_{1,q,\mathcal D}
	+
	\|\mathcal R_v^{k+1}\|_{0,1,\mathcal D}
	+
	G^{k+1}
	\right).
	\]
	Moreover,
	\[
	|\mathcal R_{v,K}^{k+1}|
	\leq
	s_{v,K}^{k+1}
	+d_vv_K^{k+1}
	+\kappa_{\mathrm c}u_K^{k+1}.
	\]
	By Hölder's inequality in time, the gradient estimate already
	obtained, the uniform \(L^1\)-bounds for \(u\) and \(v\), and
	\eqref{eq:discrete-boundary-mass-identity},
	\[
	\sum_k\Delta t
	\left(
	|v^{k+1}|_{1,q,\mathcal D}
	+
	\|\mathcal R_v^{k+1}\|_{0,1,\mathcal D}
	+
	G^{k+1}
	\right)
	\leq C_{q,T}.
	\]
	This proves \eqref{eq:v-time-derivative-Ydual}.
\end{proof}

	The spatial estimates and discrete time-derivative bounds obtained
	above now yield the translation controls required for compactness.
	
	\begin{lemma}[Space and time translation estimates]
		\label{lem:space-time-translates}
		Let
		\[
		1<q<q_*:=\frac{d+2}{d+1},
		\qquad
		q':=\frac{q}{q-1},
		\qquad
		Y_q:=W^{1,q'}(\Omega).
		\]
		Under the estimates of
		Lemmas~\ref{lem:uniform-discrete-estimates} and
		\ref{lem:uniform-discrete-v-W1p}, there exist constants
		\(C_{\sigma,T},C_{q,T}>0\), independent of
		\(h_{\mathcal D}\) and \(\Delta t\), such that, for every
		\(\xi\in\mathbb R^d\),
		\begin{align}
			\int_0^T\int_{\Omega_\xi}
			|u_{\mathcal D}(x+\xi,t)-u_{\mathcal D}(x,t)|^2
			\,dx\,dt
			&\leq
			C_{\sigma,T}|\xi|(|\xi|+h_{\mathcal D}),
			\label{eq:u-space-translate}
			\\
			\int_0^T\int_{\Omega_\xi}
			|v_{\mathcal D}(x+\xi,t)-v_{\mathcal D}(x,t)|^q
			\,dx\,dt
			&\leq
			C_{q,T}|\xi|(|\xi|+h_{\mathcal D})^{q-1}.
			\label{eq:v-space-translate}
		\end{align}
		Moreover, for every \(0<\tau<T\),
		\begin{align}
			\int_0^{T-\tau}
			\|u_{\mathcal D}(\cdot,t+\tau)
			-u_{\mathcal D}(\cdot,t)\|_{L^2(\Omega)}^2\,dt
			&\leq
			C_{\sigma,T}(\tau+\Delta t),
			\label{eq:u-time-translate}
			\\
			\int_0^{T-\tau}
			\|v_{\mathcal D}(\cdot,t+\tau)
			-v_{\mathcal D}(\cdot,t)\|_{Y_q'}\,dt
			&\leq
			C_{q,T}(\tau+\Delta t).
			\label{eq:v-time-translate}
		\end{align}
		Here the \(Y_q'\)-norm is understood through the cell-average dual
		action introduced in
		Lemma~\ref{lem:uniform-discrete-v-W1p}.
	\end{lemma}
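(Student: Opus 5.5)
The space translates follow by applying \eqref{eq:discrete-space-translation} at each time level and integrating in time. For fixed \(t\in I_k\), \(u_{\mathcal D}(\cdot,t)=u^{k+1}\). Taking \(p=2\) in \eqref{eq:discrete-space-translation} gives
\[
\int_{\Omega_\xi}|u_{\mathcal D}(x+\xi,t)-u_{\mathcal D}(x,t)|^2\,dx\le C|\xi|(|\xi|+h_{\mathcal D})\,|u^{k+1}|_{1,\mathcal D}^2 .
\]
Integrating over \(I_k\), summing in \(k\), and using the bound \(\sum_k\Delta t\,|u^{k+1}|_{1,\mathcal D}^2\le C_{\sigma,T}\) from \eqref{eq:uniform-u-estimates} yields \eqref{eq:u-space-translate}. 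The same argument with \(p=q\) and \eqref{eq:uniform-discrete-v-full-W1p} yields \eqref{eq:v-space-translate}.

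For the time translates, fix \(t\in(0,T-\tau)\). Write \(k(t)\) for the index with \(t\in I_{k(t)}\), and let \(\chi_k(t,\tau)=1\) when \(t^{k}\in(t,t+\tau]\). Then
\[
u_{\mathcal D}(\cdot,t+\tau)-u_{\mathcal D}(\cdot,t)=\sum_{k}\chi_k(t,\tau)\,\Delta t\,\delta_t u^{k+1}.
\]

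\textbf{Estimate for \(u\).} Pair this difference with itself in the discrete \(L^2\) product. Duality of \(\|\cdot\|_{-1,\mathcal D}\) against \(\|\cdot\|_{1,2,\mathcal D}\) and Young's inequality give
\[
\|u_{\mathcal D}(t+\tau)-u_{\mathcal D}(t)\|_{L^2}^2\le \sum_k\chi_k\Delta t\Bigl(\|\delta_tu^{k+1}\|_{-1,\mathcal D}^2+\tfrac12\|u_{\mathcal D}(t+\tau)\|_{1,2,\mathcal D}^2+\tfrac12\|u_{\mathcal D}(t)\|_{1,2,\mathcal D}^2\Bigr).
\]
Integrate in \(t\) and apply Fubini. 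For fixed \(k\), one has \(\int_0^{T-\tau}\chi_k(t,\tau)\,dt\le\tau\). In the other two terms, the number of indices \(k\) with \(\chi_k=1\) is at most \(\tau/\Delta t+1\), so \(\sum_k\chi_k\Delta t\le\tau+\Delta t\). Then \(\int_0^{T-\tau}\|u_{\mathcal D}(t)\|_{1,2,\mathcal D}^2\,dt\) is controlled by \eqref{eq:uniform-u-estimates}, since it equals the sum of the \(L^2\) and \(H^1\) pieces. Combining with \eqref{eq:u-discrete-time-derivative} yields the bound \(C_{\sigma,T}(\tau+\Delta t)\). This is the standard Eymard--Gallouët--Herbin argument.

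\textbf{Estimate for \(v\).} Only an \(L^1\)-in-time bound on \(\delta_tv\) is available, so Young's inequality is not needed. The triangle inequality in \(Y_q'\) gives
\[
\|v_{\mathcal D}(t+\tau)-v_{\mathcal D}(t)\|_{Y_q'}\le\sum_k\chi_k(t,\tau)\,\Delta t\,\|\delta_tv^{k+1}\|_{Y_q'} .
\]
Integrating in \(t\) and using \(\int\chi_k\,dt\le\tau\) gives \(\tau\sum_k\Delta t\,\|\delta_tv^{k+1}\|_{Y_q'}\le C_{q,T}\tau\) by \eqref{eq:v-time-derivative-Ydual}. This is even stronger than the stated \(C_{q,T}(\tau+\Delta t)\).

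The main point requiring care is that the \(Y_q'\) norm is the cell-average dual action. The dual action must therefore be linear in the discrete function, which it is, since it is defined via \(\Pi_{\mathcal D}\varphi\). The telescoping identity then holds at the level of these functionals, and the triangle inequality is legitimate.
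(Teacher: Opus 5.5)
Your proof is correct. The spatial estimates are obtained exactly as in the paper, but your time-translate argument takes a different route. The paper first works only with grid-aligned shifts $\tau_\ell=\ell\Delta t$. Cauchy--Schwarz gives $\sum_k\Delta t\,\|u^{k+\ell+1}-u^{k+1}\|_{-1,\mathcal D}^2\le C\tau_\ell^2$. This is combined with the $O(1)$ bound in $\|\cdot\|_{1,2,\mathcal D}$ through $\|w\|_{0,2,\mathcal D}^2\le\|w\|_{-1,\mathcal D}\|w\|_{1,2,\mathcal D}$ and Cauchy--Schwarz in $k$. General $\tau$ is then handled by taking $\ell=\lceil\tau/\Delta t\rceil$ and noting that the index gap is $\ell$ or $\ell-1$. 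The $v$ bound is the analogous discrete estimate $\sum_k\Delta t\,\|v^{k+\ell+1}-v^{k+1}\|_{Y_q'}\le C\tau_\ell$.

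You instead treat arbitrary $\tau$ directly. You test the telescoped difference against itself, apply Young's inequality pointwise in $t$, and conclude with Fubini and the counting facts $\int_0^{T-\tau}\chi_k\,dt\le\tau$ and $\sum_k\chi_k\Delta t\le\tau+\Delta t$; this is the classical Eymard--Gallou\"et--Herbin argument. Your route avoids the rounding step and the $\ell$ versus $\ell-1$ case distinction. For $v$ it also yields the sharper bound $C_{q,T}\tau$, since in the paper the $\Delta t$ term comes only from $\tau_\ell\le\tau+\Delta t$. The paper's route, in exchange, isolates purely discrete shift estimates free of continuous-time bookkeeping.

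A minor convention point: with $I_k=(t^k,t^{k+1}]$ and $u_{\mathcal D}=u^{k+1}$ on $I_k$, the exact index set in your telescoping identity is $t^k\in[t,t+\tau)$ rather than $(t,t+\tau]$. These differ only for a null set of $t$, so your integrated bounds are unaffected.
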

	
	\begin{proof}
		The spatial estimates follow from
		\eqref{eq:discrete-space-translation}, applied with \(p=2\) to
		\(u^{k+1}\) and with \(p=q\) to \(v^{k+1}\), followed by multiplication
		by \(\Delta t\), summation in \(k\), and the bounds of
		Lemmas~\ref{lem:uniform-discrete-estimates} and
		\ref{lem:uniform-discrete-v-W1p}.
		
		For the time translations, first let
		\[
		\tau_\ell:=\ell\Delta t,
		\qquad
		1\leq\ell\leq N_T-1.
		\]
		For \(u\), set
		\[
		w_k^\ell
		:=
		u^{k+\ell+1}-u^{k+1}
		=
		\Delta t
		\sum_{j=k+1}^{k+\ell}\delta_tu^{j+1},
		\qquad
		0\leq k\leq N_T-\ell-1.
		\]
		Cauchy--Schwarz, the discrete time-derivative estimate, and a change
		in the order of summation give
		\begin{equation}
			\label{eq:u-shift-negative}
			\sum_{k=0}^{N_T-\ell-1}
			\Delta t\,\|w_k^\ell\|_{-1,\mathcal D}^2
			\leq
			C_{\sigma,T}\tau_\ell^2.
		\end{equation}
		Moreover,
		\[
		\|w_k^\ell\|_{1,\mathcal D}^2
		\leq
		2\|u^{k+\ell+1}\|_{1,\mathcal D}^2
		+
		2\|u^{k+1}\|_{1,\mathcal D}^2,
		\]
		and hence
		\begin{equation}
			\label{eq:u-shift-positive}
			\sum_{k=0}^{N_T-\ell-1}
			\Delta t\,\|w_k^\ell\|_{1,\mathcal D}^2
			\leq
			C_{\sigma,T}.
		\end{equation}
		By the definition of the discrete dual norm,
		\[
		\|w_k^\ell\|_{0,2,\mathcal D}^2
		\leq
		\|w_k^\ell\|_{-1,\mathcal D}
		\|w_k^\ell\|_{1,\mathcal D}.
		\]
		Thus, Cauchy--Schwarz in \(k\), together with
		\eqref{eq:u-shift-negative}--\eqref{eq:u-shift-positive}, yields
		\begin{equation}
			\label{eq:u-discrete-shift}
			\sum_{k=0}^{N_T-\ell-1}
			\Delta t\,
			\|u^{k+\ell+1}-u^{k+1}\|_{0,2,\mathcal D}^2
			\leq
			C_{\sigma,T}\tau_\ell.
		\end{equation}
		
		For \(v\), set
		\[
		z_k^\ell
		:=
		v^{k+\ell+1}-v^{k+1}
		=
		\Delta t
		\sum_{j=k+1}^{k+\ell}\delta_tv^{j+1}.
		\]
		The triangle inequality, a change in the order of summation, and
		\eqref{eq:v-time-derivative-Ydual} give
		\begin{equation}
			\label{eq:v-discrete-shift}
			\sum_{k=0}^{N_T-\ell-1}
			\Delta t\,\|z_k^\ell\|_{Y_q'}
			\leq
			C_{q,T}\tau_\ell.
		\end{equation}
		
		Finally, for \(0<\tau<T\), choose
		\[
		\ell:=\left\lceil\frac{\tau}{\Delta t}\right\rceil,
		\qquad
		\tau\leq\tau_\ell\leq\tau+\Delta t.
		\]
		Since the reconstructions are piecewise constant in time, the indices
		associated with \(t\) and \(t+\tau\) differ by either \(\ell\) or
		\(\ell-1\). Applying
		\eqref{eq:u-discrete-shift} and
		\eqref{eq:v-discrete-shift} to these two shifts proves
		\eqref{eq:u-time-translate} and
		\eqref{eq:v-time-translate}.
	\end{proof}

We now apply the preceding uniform and translation estimates to a
refining sequence of discretizations.

Let \((\mathcal D_m,\Delta t_m)_{m\geq1}\) be a uniformly regular
sequence of admissible discretizations such that
\[
h_m:=h_{\mathcal D_m}\to0,
\qquad
\Delta t_m\to0,
\qquad
u_m:=u_{\mathcal D_m,\Delta t_m},
\qquad
v_m:=v_{\mathcal D_m,\Delta t_m}.
\]
Fix \(1<q<q_*=(d+2)/(d+1)\). The uniform estimates and
Lemma~\ref{lem:space-time-translates} imply that, after extraction,
there exist nonnegative functions
\[
\begin{aligned}
	u&\in L^\infty(0,T;L^2(\Omega))
	\cap L^2(0,T;H^1(\Omega)),
	&
	\partial_tu&\in L^2(0,T;H^1(\Omega)'),\\
	v&\in L^\infty(0,T;L^1(\Omega))
	\cap L^q(0,T;W^{1,q}(\Omega)),
\end{aligned}
\]
such that
\begin{equation}
	\label{eq:compactness-convergences}
	\begin{aligned}
		u_m&\stackrel{*}{\rightharpoonup}u
		&&\text{in }L^\infty(0,T;L^2(\Omega)),
		&
		u_m&\to u
		&&\text{strongly in }L^2(Q_T),\\
		\nabla_{\mathcal D_m}u_m&\rightharpoonup\nabla u
		&&\text{in }L^2(Q_T)^d,
		&
		v_m&\to v
		&&\text{strongly in }L^1(Q_T)\cap L^q(Q_T),\\
		\nabla_{\mathcal D_m}v_m&\rightharpoonup\nabla v
		&&\text{in }L^q(Q_T)^d.
	\end{aligned}
\end{equation}
After a further extraction,
\[
u_m\to u,
\qquad
v_m\to v
\qquad\text{a.e. in }Q_T.
\]

Indeed, the strong compactness of \(u_m\) in \(L^2(Q_T)\) follows
from Lemma~\ref{lem:space-time-translates} and the finite-volume
Fréchet--Kolmogorov criterion. For the signal component, we use the
uniform compatibility estimate
\[
\|w_m\|_{0,1,\mathcal D_m}
\leq
\varepsilon\|w_m\|_{1,q,\mathcal D_m}
+
C_\varepsilon\|w_m\|_{Y_q'},
\qquad
w_m\in X_{\mathcal D_m},
\]
where the last norm is understood through the cell-average dual
action introduced in
Lemma~\ref{lem:uniform-discrete-v-W1p}. This estimate follows by
contradiction from the discrete spatial-translation estimate and the
embedding
\[
Y_q=W^{1,q'}(\Omega)\hookrightarrow C(\overline\Omega).
\]
Indeed, a sequence bounded in the discrete \(W^{1,q}\)-norm is
relatively compact in \(L^1(\Omega)\), whereas convergence to zero in
\(Y_q'\), together with the consistency of
\(\Pi_{\mathcal D_m}\), forces every \(L^1\)-limit to vanish.

Applying the compatibility estimate to
\[
w_m(t):=v_m(t+\tau)-v_m(t),
\]
and using Lemmas~\ref{lem:uniform-discrete-v-W1p} and
\ref{lem:space-time-translates}, we obtain
\[
\begin{aligned}
	\int_0^{T-\tau}
	\|v_m(t+\tau)-v_m(t)\|_{L^1(\Omega)}\,dt
	\leq
	C_{q,T}\varepsilon
	+
	C_\varepsilon(\tau+\Delta t_m).
\end{aligned}
\]
Consequently,
\[
\lim_{\tau\to0}
\limsup_{m\to\infty}
\int_0^{T-\tau}
\|v_m(t+\tau)-v_m(t)\|_{L^1(\Omega)}\,dt
=0.
\]
Together with the spatial-translation estimate in
Lemma~\ref{lem:space-time-translates}, the finite-volume
Fréchet--Kolmogorov criterion yields
\[
v_m\to v
\qquad\text{strongly in }L^1(Q_T).
\]

Choose \(q<r<q_*\). Applying
Lemma~\ref{lem:uniform-discrete-v-W1p} with exponent \(r\) gives a
uniform \(L^r(Q_T)\)-bound. After a further weak extraction in
\(L^r(Q_T)\), the strong \(L^1(Q_T)\)-convergence identifies the weak
limit with \(v\). Interpolation therefore gives
\[
v_m\to v
\qquad\text{strongly in }L^q(Q_T).
\]
The discrete-gradient limits follow from the standard finite-volume
gradient-identification results
\cite{DroniouGallouetHerbin2003,AounGuibe2024}.
Finally, \eqref{eq:u-discrete-time-derivative}, the stability of the
cell-average projection in the discrete \(H^1\)-norm, and discrete
summation by parts identify the weak limit of \(\delta_tu_m\) with
\(\partial_tu\).

The convergences in
\eqref{eq:compactness-convergences} identify the limiting unknowns.
It remains to establish consistency of the discrete diffusion and
complete upwind chemotactic fluxes.

For \(e=K|L\), write
\[
\delta_ez_m^{k+1}:=
z_{m,L}^{k+1}-z_{m,K}^{k+1},
\]
and introduce
\[
\begin{aligned}
	\mathcal A_m(w,z)
	&:=
	\sum_k\Delta t_m
	\sum_{e=K|L\in\mathcal E_{m,\mathrm{int}}}
	\tau_e\,
	\delta_ew_m^{k+1}\delta_ez_m^{k+1},
	\\
	\mathcal C_m(u,c;\varphi)
	&:=
	\sum_k\Delta t_m
	\sum_{e=K|L\in\mathcal E_{m,\mathrm{int}}}
	\tau_e
	\bigl[
	(\delta_ec_m^{k+1})^+u_{m,K}^{k+1}
	-
	(\delta_ec_m^{k+1})^-u_{m,L}^{k+1}
	\bigr]
	(\varphi_{m,K}^{k+1}-\varphi_{m,L}^{k+1}).
\end{aligned}
\]

Now, let us proceed to establish consistency of the discrete fluxes.

\begin{lemma}[Consistency of the discrete fluxes]
	\label{lem:complete-upwind-chemotactic-flux}
	Let \((u_m,v_m)\) satisfy
	\eqref{eq:compactness-convergences}, set
	\[
	c_m^{k+1}:=K_{\sigma,\mathcal D_m}[v_m^{k+1}],
	\qquad
	\varphi_{m,K}^{k+1}:=\varphi(x_K,t_m^{k+1}),
	\qquad
	\psi_{m,K}^{k+1}:=\psi(x_K,t_m^{k+1}),
	\]
	for \(\varphi,\psi\in\mathscr T_T\). Then
	\begin{align}
		\mathcal A_m(u_m,\varphi_m)
		&\longrightarrow
		\int_{Q_T}\nabla u\cdot\nabla\varphi\,dx\,dt,
		\label{eq:u-diffusion-consistency}\\
		\mathcal A_m(v_m,\psi_m)
		&\longrightarrow
		\int_{Q_T}\nabla v\cdot\nabla\psi\,dx\,dt,
		\label{eq:v-diffusion-consistency}\\
		\mathcal C_m(u_m,c_m;\varphi_m)
		&\longrightarrow
		-\int_{Q_T}
		u\,\nabla\mathcal K_\sigma[v]\cdot\nabla\varphi\,dx\,dt.
		\label{eq:chemotactic-flux-consistency}
	\end{align}
\end{lemma}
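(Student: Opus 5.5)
The three limits follow one common pattern. Each discrete bilinear form is rewritten as an integral over the diamonds \(D_e\) of a discrete gradient times a consistent approximation of \(\nabla\varphi\) or \(\nabla\psi\). The weak convergences in \eqref{eq:compactness-convergences} are then paired with strong convergence of the test-function part.

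\emph{Diffusion terms.} For \(e=K|L\), write
\[
\tau_e\,\delta_ew^{k+1}\delta_e\varphi^{k+1}
=\int_{D_e}\nabla_{\mathcal D_m}w^{k+1}\cdot\frac{\varphi(x_L,t^{k+1})-\varphi(x_K,t^{k+1})}{d_{KL}}\nu_{K,e}\,dx .
\]
This uses \(|D_e|=|e|d_{KL}/d\) and the factor \(d\) in \eqref{eq:discrete-gradient-definition}. The mesh is orthogonal, so \(x_L-x_K=d_{KL}\nu_{K,e}\). Since \(\varphi\in C^1([0,T];C^2(\overline\Omega))\), a Taylor expansion gives
\[
\frac{\varphi(x_L)-\varphi(x_K)}{d_{KL}}=\nabla\varphi(x)\cdot\nu_{K,e}+O(h_m)\quad\text{on }D_e .
\]
Hence \(\mathcal A_m(u_m,\varphi_m)=\int_{Q_T}\nabla_{\mathcal D_m}u_m\cdot P_m\,dx\,dt\), where
\[
P_m:=d\,(\nabla\varphi\cdot\nu_{K,e})\nu_{K,e}/d \ \text{(up to }O(h_m+\Delta t_m))\ \text{ on }D_e\times I_k .
\]
The standard fact from \cite{EymardGallouetHerbin2000} is that \(\sum_e\) of such diamond projections converges to \(\nabla\varphi\) weakly. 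More precisely, \(\int_{Q_T}\nabla_{\mathcal D_m}u_m\cdot(\nu\nu^T\nabla\varphi)\) combined with the weak limit \(\nabla_{\mathcal D_m}u_m\rightharpoonup\nabla u\) already provides the identification. To avoid the projection subtlety, I would instead argue directly: sum by parts \eqref{eq:discrete-integration-by-parts} to get
\[
-\sum|K|u_K\,(\Delta_{\mathcal D}\varphi)_K,
\]
and use the consistency of the two-point flux for \(C^2\) functions. Then \(\tfrac1{|K|}\sum_e\tau_e(\varphi_L-\varphi_K)=\tfrac1{|K|}\int_{\partial K}\nabla\varphi\cdot\nu+R_K\), with the remainder \(R\) controlled by \(h_m\) times a discrete gradient in the usual way. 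This gives the limit \(\int u\,(-\Delta\varphi)\) plus the boundary term from \(\nabla\varphi\cdot\nu\) on \(\partial\Omega\). The same identity applied in the limit produces \(\int\nabla u\cdot\nabla\varphi\), since the boundary face contributions match. The error is bounded by \(C h_m\,|u_m|_{L^1(0,T;W^{1,1}_{\mathcal D})}\). Exactly the same argument with the \(L^q\) gradient bound of Lemma~\ref{lem:uniform-discrete-v-W1p} and Hölder's inequality gives \eqref{eq:v-diffusion-consistency}. For \(v\) only \(q>1\) is available, which suffices because the test gradient is bounded.

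\emph{Chemotactic term.} I would use the upwind identity from the proof of Lemma~\ref{lem:uniform-discrete-estimates} to split \(\mathcal C_m=\mathcal C_m^{c}+\mathcal C_m^{\mathrm{num}}\). The centered part is
\[
\mathcal C_m^{c}=\sum_k\Delta t\sum_e\tau_e\,\delta_ec\,\overline u_e\,(\varphi_K-\varphi_L),
\]
and the numerical diffusion is
\[
\mathcal C_m^{\mathrm{num}}=\sum_k\Delta t\sum_e\tau_e\tfrac{|\delta_ec|}{2}(u_K-u_L)(\varphi_K-\varphi_L).
\]
By \eqref{eq:uniform-c-lipschitz}, \(|\delta_ec|\le Cd_{KL}\), and \(|\varphi_K-\varphi_L|\le Cd_{KL}\). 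Cauchy--Schwarz with \eqref{eq:mesh-regularity} then gives
\[
|\mathcal C_m^{\mathrm{num}}|\le C h_m\,\Bigl(\sum_k\Delta t\,|u^{k+1}|_{1,\mathcal D}^2\Bigr)^{1/2}\to0 .
\]
For \(\mathcal C_m^{c}\), the key step is strong convergence of the discrete velocity. Define \(B_m:=\frac{\delta_ec}{d_{KL}}\) on \(D_e\) (a reconstruction of \(\nabla\mathcal K_\sigma[v]\cdot\nu_{K,e}\)). Since \(K_\sigma\in W^{2,\infty}\), we have
\[
\frac{\delta_e c}{d_{KL}}=\sum_M\frac{|M|}{d_{KL}}\Bigl(\tfrac{K_\sigma(x_L,x_M)}{Z_L}-\tfrac{K_\sigma(x_K,x_M)}{Z_K}\Bigr)v_M
=\sum_M|M|\,\nabla_x\widetilde K(x_e,x_M)\cdot\nu_{K,e}\,v_M+O(h_m)\|v\|_{L^1},
\]
where \(\widetilde K\) is the normalized kernel and \(Z_K\to\int K_\sigma(x_K,y)dy=1\) uniformly. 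Since \(\nabla_xK_\sigma\) is uniformly continuous and \(v_m\to v\) in \(L^1(Q_T)\) with uniformly bounded \(L^1\) mass, this converges to \(\nabla\mathcal K_\sigma[v](x,t)\cdot\nu_{K,e}\) in \(L^p\) for every finite \(p\), and is uniformly bounded. Moreover \(\overline u_e\to u\) strongly in \(L^2\), by comparison with \(u_m\) via the space-translation bound. Hence
\[
\mathcal C_m^c=-\int_{Q_T}d\,B_m\,\overline u_e\,(\nabla\varphi\cdot\nu_{K,e})\,dx\,dt+o(1).
\]
The product \(d\,(\mathbf b\cdot\nu)(\nabla\varphi\cdot\nu)\) averaged over the diamond orientations has to be identified with \(\mathbf b\cdot\nabla\varphi\), and this is the main obstacle. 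I would handle it as in the diffusion step, avoiding the projection: write \(\delta_ec\approx(\mathcal K_\sigma[v_m](x_L)-\mathcal K_\sigma[v_m](x_K))\) and recognize
\[
\sum_e\tau_e\,\delta_e c\,\overline u_e\,\delta_e\varphi
\]
as the discrete form \(\mathcal A_m\) applied to the smooth-in-space function \(c\) weighted by \(u\). Then use the identity
\[
\tau_e\delta_ec\,\delta_e\varphi=\tfrac1{2}\tau_e\bigl[\delta_e(c\varphi)\ldots\bigr]
\]
to reduce it to two-point consistency for \(C^2\) (in \(x\)) functions \(\mathcal K_\sigma[v_m]\) and \(\varphi\). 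The product \(u\,\nabla c\cdot\nabla\varphi\) then arises from the consistency of \(\tau_e\delta_eF\,\delta_eG\approx\int_{D_e}d(\nabla F\cdot\nu)(\nabla G\cdot\nu)\) summed over the faces of each cell, together with the \(W^{2,\infty}\) regularity of the kernel. The final convergence is by the strong–bounded pairing, giving \eqref{eq:chemotactic-flux-consistency}.
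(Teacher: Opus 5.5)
Your splitting of $\mathcal C_m$ into a centred part and an upwind remainder, the $O(h_m)$ bound on the remainder, and the strong convergence of $\overline u_m$ and of the reconstructed velocity are as in the paper. The gap is exactly the step you yourself call the main obstacle. After these reductions, $\mathcal C_m^{c}=-\int_{Q_T}\overline u_m\,\nabla\widetilde c_m\cdot\mathbb M_m\nabla\varphi\,dx\,dt+o(1)$, with $\mathbb M_m:=d\,\nu_e\otimes\nu_e$ on $D_e$. Both $\nabla\widetilde c_m$ and $\nabla\varphi$ are continuous fields, so nothing is aligned with $\nu_e$, and you must show that the oscillating tensor $\mathbb M_m$ averages to the identity.

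Your sketch does not do this. The polarization identity is left as ``$\ldots$''. Its closing appeal to $\tau_e\delta_eF\,\delta_eG\approx\int_{D_e}d(\nabla F\cdot\nu)(\nabla G\cdot\nu)$ ``summed over the faces of each cell'' only restates the obstacle. Cellwise it is actually false, since $\sum_{e\in\mathcal E_K}|e|\,d_{K,e}\,\nu_{K,e}\otimes\nu_{K,e}\neq|K|I$ in general. For example, take a triangular Voronoi cell whose generator is the incentre of a right isosceles triangle: the off-diagonal entry is nonzero.

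The missing ingredient is the \emph{global} statement $\mathbb M_m\rightharpoonup^\ast I$ in $L^\infty(Q_T)^{d\times d}$. You mention this as ``the standard fact'' in your diffusion paragraph and then drop it. The paper proves it as follows: the discrete gradient of the interpolated coordinate $x_i$ equals $\mathbb M_me_i$; one tests against $\Phi\in C^1_c(\Omega)^d$ and sums by parts over all cells. Then $\mathbb M_m^2=d\,\mathbb M_m$ turns $\frac1d\nabla_{\mathcal D_m}c_m\cdot\nabla_{\mathcal D_m}\varphi_m$ into $\nabla\widetilde c_m\cdot\mathbb M_m\nabla\varphi+o(1)$. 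Finally, the weak-$\ast$ limit of $\mathbb M_m$ is paired with the strong $L^1$ convergence $\overline u_m\nabla\widetilde c_m\otimes\nabla\varphi\to u\nabla\mathcal K_\sigma[v]\otimes\nabla\varphi$.

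Your polarization idea can in principle be completed, but not as written. It would need the single-flux consistency $\tau_e(F_L-F_K)=\int_e\nabla F\cdot\nu_{K,e}\,dS+O(|e|h_m)$, a global summation by parts, the pointwise identity $\Delta(c\varphi)-c\Delta\varphi-\varphi\Delta c=2\nabla c\cdot\nabla\varphi$, and control of the boundary-cell remainders.

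For the diffusion terms, by contrast, there is no projection subtlety, and your first representation already closes the argument. On $D_e$ the discrete gradient $\nabla_{\mathcal D_m}w$ is parallel to $\nu_e$, so $\nabla_{\mathcal D_m}w\cdot(\nabla\varphi\cdot\nu_e)\nu_e=\nabla_{\mathcal D_m}w\cdot\nabla\varphi$ exactly. Hence $\mathcal A_m(w_m,\zeta_m)=\int_{Q_T}\nabla_{\mathcal D_m}w_m\cdot\nabla\zeta\,dx\,dt+O(h_m+\Delta t_m)\|\nabla_{\mathcal D_m}w_m\|_{L^1(Q_T)}$, which converges by the assumed weak convergence for $p=2$ and $p=q$. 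This is precisely the paper's argument.

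The detour you actually adopt, summation by parts against a two-point Laplacian, is weaker. Test functions in $\mathscr T_T$ need not satisfy $\partial_\nu\varphi=0$, so it produces the discrete boundary term $\sum_K u_K\int_{\partial K\cap\partial\Omega}\partial_\nu\varphi\,dS$. Its convergence to $\int_{\partial\Omega}u\,\partial_\nu\varphi\,dS$ requires a convergence result for discrete traces that is not among the hypotheses of the lemma, and the remark that ``the boundary face contributions match'' does not supply one.
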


\begin{proof}
	The diffusion limits follow from
	Proposition~\ref{cor:appendix-diffusion-consistency}, applied with
	\[
	(w_m,\zeta,p)=(u_m,\varphi,2),
	\qquad
	(w_m,\zeta,p)=(v_m,\psi,q).
	\]
	
	For the chemotactic term, set
	\[
	\overline u_{m,e}^{k+1}
	:=
	\frac{u_{m,K}^{k+1}+u_{m,L}^{k+1}}2.
	\]
	The identity
	\[
	a^+r-a^-s
	=
	a\frac{r+s}{2}
	+
	|a|\frac{r-s}{2}
	\]
	gives
	\[
	\mathcal C_m
	=
	\mathcal C_m^{\mathrm{cen}}
	+
	\mathcal R_m^{\mathrm{up}},
	\]
	where
	\[
	\begin{aligned}
		\mathcal C_m^{\mathrm{cen}}
		&:=
		\sum_k\Delta t_m
		\sum_{e=K|L}
		\tau_e\,
		\delta_ec_m^{k+1}\overline u_{m,e}^{k+1}
		(\varphi_{m,K}^{k+1}-\varphi_{m,L}^{k+1}),\\
		\mathcal R_m^{\mathrm{up}}
		&:=
		\frac12\sum_k\Delta t_m
		\sum_{e=K|L}
		\tau_e|\delta_ec_m^{k+1}|
		\delta_eu_m^{k+1}\delta_e\varphi_m^{k+1}.
	\end{aligned}
	\]
	
	Let \(\mathbb M_m,\widetilde c_m,\overline u_m\) be the
	reconstructions of Propositions~\ref{lem:appendix-mesh-tensor} and
	\ref{lem:appendix-nonlocal-potential}. They satisfy
	\[
	\begin{gathered}
		\mathbb M_m\rightharpoonup^\ast I
		\ \text{in }L^\infty(Q_T)^{d\times d},
		\qquad
		\widetilde c_m\to\mathcal K_\sigma[v]
		\ \text{in }L^2(0,T;W^{1,\infty}(\Omega)),\\
		\overline u_m\to u
		\ \text{in }L^2(Q_T),\qquad
		\nabla_{\mathcal D_m}c_m
		=\mathbb M_m\nabla\widetilde c_m+r_m^c,\qquad
		\nabla_{\mathcal D_m}\varphi_m
		=\mathbb M_m\nabla\varphi+r_m^\varphi,\\
		\|r_m^c\|_{L^\infty(Q_T)}
		+\|r_m^\varphi\|_{L^\infty(Q_T)}
		\longrightarrow0.
	\end{gathered}
	\]
	
	Using \(|D_e|=|e|d_{KL}/d\),
	\(\mathbb M_m^\top=\mathbb M_m\), and
	\(\mathbb M_m^2=d\mathbb M_m\), we obtain
	\[
	\begin{aligned}
		\mathcal C_m^{\mathrm{cen}}
		&=
		-\frac1d\int_{Q_T}
		\overline u_m\,
		\nabla_{\mathcal D_m}c_m\cdot
		\nabla_{\mathcal D_m}\varphi_m\,dx\,dt\\
		&=
		-\int_{Q_T}
		\overline u_m\,
		\nabla\widetilde c_m\cdot
		\mathbb M_m\nabla\varphi\,dx\,dt+o(1)\\
		&\longrightarrow
		-\int_{Q_T}
		u\,\nabla\mathcal K_\sigma[v]\cdot\nabla\varphi\,dx\,dt.
	\end{aligned}
	\]
	Indeed,
	\[
	\overline u_m\nabla\widetilde c_m\otimes\nabla\varphi
	\to
	u\nabla\mathcal K_\sigma[v]\otimes\nabla\varphi
	\quad\text{in }L^1(Q_T)^{d\times d},
	\]
	which can be paired with the weak-star convergence of \(\mathbb M_m\).
	
	Finally,
	\[
	|\delta_ec_m^{k+1}|
	\leq C_{\sigma,T}d_{KL},
	\qquad
	|\delta_e\varphi_m^{k+1}|
	\leq C_\varphi d_{KL},
	\]
	and, by Cauchy--Schwarz and mesh regularity,
	\[
	\sum_{e=K|L}|e|d_{KL}|\delta_eu_m^{k+1}|
	\leq
	|u_m^{k+1}|_{1,\mathcal D_m}
	\left(\sum_{e=K|L}|e|d_{KL}^3\right)^{1/2}
	\leq
	Ch_m|u_m^{k+1}|_{1,\mathcal D_m}.
	\]
	Therefore,
	\[
	\begin{aligned}
		|\mathcal R_m^{\mathrm{up}}|
		&\leq
		C_{\sigma,T,\varphi}h_m
		\sum_k\Delta t_m|u_m^{k+1}|_{1,\mathcal D_m}\\
		&\leq
		C_{\sigma,T,\varphi}h_mT^{1/2}
		\left(
		\sum_k\Delta t_m|u_m^{k+1}|_{1,\mathcal D_m}^2
		\right)^{1/2}
		\longrightarrow0.
	\end{aligned}
	\]
	Combining both contributions proves
	\eqref{eq:chemotactic-flux-consistency}.
\end{proof}

Proposition~\ref{prop:discrete-existence-positivity},
the compactness properties in
\eqref{eq:compactness-convergences}, and
Lemma~\ref{lem:complete-upwind-chemotactic-flux} now provide the
ingredients for the main convergence result.
	
	\begin{theorem}[Convergence and existence]
		\label{thm:existence-convergence}
		Let \(\sigma>0\), \(1<q<q_*:=(d+2)/(d+1)\), and let
		Assumption~\ref{ass:model-data} hold. Let
		\((\mathcal D_m,\Delta t_m)_{m\geq1}\) be a uniformly regular sequence
		of admissible discretizations resolving
		\(\mathfrak P_{\partial}\), with
		\(h_{\mathcal D_m}+\Delta t_m\to0\).
		
		For every \(m\), scheme~\eqref{eq:finite-volume-scheme} admits at least
		one nonnegative solution. Moreover, every sequence
		\((u_m,v_m)\) obtained by selecting one such solution for each \(m\)
		contains a subsequence converging as in
		\eqref{eq:compactness-convergences} to a nonnegative weak solution of
		\eqref{eq:ad-nonlocal-model} in the sense of
		Definition~\ref{def:weak-solution}. Consequently, whenever such a
		mesh sequence exists, the continuous problem admits a nonnegative
		weak solution for every fixed \(\sigma>0\).
	\end{theorem}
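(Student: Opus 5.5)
The plan is to assemble the existing ingredients and then pass to the limit in the scheme tested against discrete test functions. Existence and nonnegativity at each $m$ follow directly from Proposition~\ref{prop:discrete-existence-positivity}, since $u_K^0=(\Pi_{\mathcal D}u_0)_K\geq0$ and $v_K^0\geq0$. For any selected sequence of solutions, Lemmas~\ref{lem:uniform-discrete-estimates}--\ref{lem:space-time-translates} give the mesh-independent bounds. The compactness discussion preceding Lemma~\ref{lem:complete-upwind-chemotactic-flux} then yields a subsequence with the convergences \eqref{eq:compactness-convergences}, a.e.\ convergence, and the stated regularity of the nonnegative limits $(u,v)$. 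The remaining task is to verify \eqref{eq:weak-formulation}.

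Fix $(\varphi,\psi)\in\mathscr T_T^2$ and set $\varphi_{m,K}^{k}:=\varphi(x_K,t^k_m)$, and similarly for $\psi$. Multiply the $u$- and $v$-equations of \eqref{eq:finite-volume-scheme} by $\Delta t_m\varphi_{m,K}^{k+1}$ and $\Delta t_m\psi_{m,K}^{k+1}$, sum over $K$ and $k$, and use \eqref{eq:discrete-integration-by-parts}. There are four groups of terms.

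\emph{Time terms.} Discrete summation by parts in time, using $\varphi(\cdot,T)=0$, gives
\[
-\sum_k\Delta t_m\sum_K|K|u_K^{k+1}\frac{\varphi_K^{k+1}-\varphi_K^k}{\Delta t_m}-\sum_K|K|u_K^0\varphi_K^0 .
\]
The difference quotient converges uniformly to $\partial_t\varphi$, and $u_m\to u$ in $L^1(Q_T)$. The initial term converges to $\int_\Omega u_0\varphi(\cdot,0)$ because $\Pi_{\mathcal D}u_0\to u_0$ in $L^1(\Omega)$ and $\varphi_K^0$ is uniformly close to $\varphi(\cdot,0)$. The $v$ time term is handled the same way, using only $L^1$ convergence.

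\emph{Flux terms.} The diffusion and chemotaxis terms equal $D_u\mathcal A_m(u_m,\varphi_m)$, $D_v\mathcal A_m(v_m,\psi_m)$ and $\chi\mathcal C_m(u_m,c_m;\varphi_m)$, up to sign conventions. Lemma~\ref{lem:complete-upwind-chemotactic-flux} handles all three, producing $\int D_u\nabla u\cdot\nabla\varphi$, $\int D_v\nabla v\cdot\nabla\psi$ and $-\chi\int u\nabla\mathcal K_\sigma[v]\cdot\nabla\varphi$.

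\emph{Reaction terms.} Write $\sum_k\Delta t_m\sum_K|K|\mathcal R_{u,K}^{k+1}(u_K^{k+1},v_K^{k+1})\varphi_K^{k+1}$ as $\int_{Q_T}\mathcal R_u(x,t,u_m,v_m)\varphi_m$, where $\varphi_m$ is the piecewise constant reconstruction. This is valid because $u_m,v_m$ are constant on $K\times I_k$, so the cell--time average of $\mathcal R_u$ integrates exactly. The Carath\'eodory property and a.e.\ convergence give pointwise convergence of the integrand. The bounds $|\mathcal R_u|\leq\lambda_{\mathrm a}u_{\mathrm r}+d_uu_m$ and $|\mathcal R_v|\leq s_v+d_vv_m+\kappa_{\mathrm c}u_m$ make the integrands dominated by sequences converging strongly in $L^1$. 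The generalized dominated convergence (Vitali) theorem then gives the limit.

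\emph{Boundary term.} I expect this to be the main obstacle. The transferred boundary contribution is exactly
\[
\sum_k\sum_{e\in\mathcal E_{\mathrm v}}\overline\mu(\widehat e\times I_k)\,\psi(x_{K(e)},t^{k+1}_m)=\int_{\Gamma_{\mathrm v}\times(0,T]}\psi_m^\partial\,d\overline\mu ,
\]
where $\psi_m^\partial(\xi,t):=\psi(x_{K(e)},t_m^{k+1})$ for $\xi\in\widehat e$ and $t\in I_k$. Since $\widehat e\subset e$ and $|\xi-x_{K(e)}|\leq h_m$, and $\psi$ is uniformly continuous on $\overline\Omega\times[0,T]$, we get $|\psi_m^\partial-\psi|\leq\omega_\psi(h_m+\Delta t_m)$ on $(\Gamma_{\mathrm v}\setminus\Sigma)\times(0,T]$. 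The set $\Sigma\times[0,T]$ is $\overline\mu$-null, and $\overline\mu$ is a finite measure. Hence the difference is bounded by $\omega_\psi(h_m+\Delta t_m)\,\overline\mu(\Gamma_{\mathrm v}\times(0,T])\to0$.

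The only delicacy is that the boundary term is exact rather than approximated, so no regularity of $\mu$ in time is needed. The half-open interval $(0,T]$ matches the partition by the $I_k$; an atom of $\overline\mu$ at $t=0$ is excluded, consistent with the definition. Collecting the limits yields \eqref{eq:weak-formulation}. The final sentence of the theorem is then immediate: any admissible uniformly regular resolving mesh sequence produces a limit, and that limit is a nonnegative weak solution.
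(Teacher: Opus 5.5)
Apart from one step, your argument follows the paper's proof. Existence comes from Proposition~\ref{prop:discrete-existence-positivity}, compactness is cited, the three flux limits come from Lemma~\ref{lem:complete-upwind-chemotactic-flux}, and the boundary term is handled exactly as in Proposition~\ref{lem:appendix-boundary-measure}. Your reaction step applies Carath\'eodory continuity, a.e.\ convergence and generalized dominated convergence directly to $\mathcal R_u(x,t,u_m,v_m)\varphi_m$. This is a slightly more direct equivalent of the paper's factorization through $G_m=\Pi_m[(u_{\mathrm r}-u_m)^+]$, and it is fine.

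The step that fails as written is the discrete integration by parts in time. With test values $\varphi_K^{k+1}$ and $\varphi_K^{N_T}=0$, Abel summation gives
\[
\sum_k\sum_K|K|(u_K^{k+1}-u_K^k)\varphi_K^{k+1}
=-\sum_k\sum_K|K|\,u_K^{k}\,(\varphi_K^{k+1}-\varphi_K^k)-\sum_K|K|u_K^0\varphi_K^0 .
\]
The correct identity has the left value $u_K^k$, not $u_K^{k+1}$. For $N_T=1$ the left-hand side is $0$, whereas your expression equals $\sum_K|K|(u_K^1-u_K^0)\varphi_K^0$.

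Your formula differs from the correct one by $-\sum_k\sum_K|K|(u_K^{k+1}-u_K^k)(\varphi_K^{k+1}-\varphi_K^k)$. This is bounded by $\|\partial_t\varphi\|_{L^\infty}\|u_m-u_m^-\|_{L^1(Q_T)}$, so the limit you state is right. However, ``$u_m\to u$ in $L^1(Q_T)$'' does not justify it. What you need is $u_m^-\to u$ and $v_m^-\to v$ in $L^1(Q_T)$ for the left-in-time reconstructions. That is exactly why the paper introduces $u_m^-,v_m^-$ and invokes Proposition~\ref{lem:appendix-left-reconstruction}.

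Alternatively, keep $u^{k+1}$ but pair it with the forward quotient:
\[
\sum_k\sum_K|K|(u_K^{k+1}-u_K^k)\varphi_K^{k+1}
=-\sum_{k=0}^{N_T-2}\sum_K|K|\,u_K^{k+1}(\varphi_K^{k+2}-\varphi_K^{k+1})-\sum_K|K|u_K^0\varphi_K^1 .
\]
With this identity, $u_m\to u$ and $v_m\to v$ in $L^1(Q_T)$ suffice, together with the uniform $L^1$ bounds to discard the last time slab. With either correction your proof is complete.
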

	
	\begin{proof}
		Proposition~\ref{prop:discrete-existence-positivity} gives a
		nonnegative discrete solution for every \(m\). By the preceding
		compactness results, after extraction,
		\[
		u_m\to u\ \text{in }L^2(Q_T),\qquad
		v_m\to v\ \text{in }L^1(Q_T)\cap L^q(Q_T),\qquad
		(u_m,v_m)\to(u,v)\ \text{a.e. in }Q_T,
		\]
		together with the remaining convergences in
		\eqref{eq:compactness-convergences}. In particular, \(u,v\geq0\)
		almost everywhere.
		
		Fix \((\varphi,\psi)\in\mathscr T_T^2\). For
		\(\zeta\in\{\varphi,\psi\}\), define
		\[
		\zeta_{m,K}^k:=\zeta(x_K,t_m^k),\qquad
		\dot\zeta_m\big|_{K\times I_m^k}
		:=
		\frac{\zeta_{m,K}^{k+1}-\zeta_{m,K}^k}{\Delta t_m},
		\]
		and set, on \(K\times I_m^k\),
		\[
		u_m^-:=u_{m,K}^k,\qquad v_m^-:=v_{m,K}^k.
		\]
		By Proposition~\ref{lem:appendix-left-reconstruction} and the smoothness of
		the test functions,
		\begin{equation}
			\label{eq:left-and-test-convergences}
			u_m^-\to u\ \text{in }L^2(Q_T),\qquad
			v_m^-\to v\ \text{in }L^1(Q_T),\qquad
			(\dot\varphi_m,\dot\psi_m)
			\to(\partial_t\varphi,\partial_t\psi)
			\ \text{uniformly}.
		\end{equation}
		
		Introduce
		\[
		\mathcal T_m(w_m,\zeta_m)
		:=
		\sum_{k,K}|K|
		(w_{m,K}^{k+1}-w_{m,K}^k)\zeta_{m,K}^{k+1}.
		\]
		Since \(\zeta(\cdot,T)=0\), discrete summation by parts gives
		\[
		\mathcal T_m(w_m,\zeta_m)
		=
		-\int_{Q_T}w_m^-\dot\zeta_m\,dx\,dt
		-\sum_K|K|w_{m,K}^0\zeta_{m,K}^0.
		\]
		Moreover,
		\[
		\sum_K|K|w_{m,K}^0\zeta(x_K,0)
		=
		\int_\Omega w_0\,\zeta_m^0\,dx
		\longrightarrow
		\int_\Omega w_0\,\zeta(\cdot,0)\,dx,
		\]
		because \(\|\zeta_m^0-\zeta(\cdot,0)\|_{L^\infty(\Omega)}\to0\).
		Hence \eqref{eq:left-and-test-convergences} yields
		\begin{align}
			\mathcal T_m(u_m,\varphi_m)
			&\longrightarrow
			-\int_{Q_T}u\,\partial_t\varphi\,dx\,dt
			-\int_\Omega u_0\varphi(\cdot,0)\,dx,
			\label{eq:u-time-term-limit}\\
			\mathcal T_m(v_m,\psi_m)
			&\longrightarrow
			-\int_{Q_T}v\,\partial_t\psi\,dx\,dt
			-\int_\Omega v_0\psi(\cdot,0)\,dx.
			\label{eq:v-time-term-limit}
		\end{align}
		
		The two diffusion terms and the complete upwind chemotactic term
		converge by
		Lemma~\ref{lem:complete-upwind-chemotactic-flux}.
		
		For the reactions, set
		\[
		\begin{gathered}
			G_m:=\Pi_m[(u_{\mathrm r}-u_m)^+],\qquad
			G:=(u_{\mathrm r}-u)^+,\qquad
			s_{v,m}:=\Pi_m s_v,\\
			a(s):=\frac{s^{m_{\mathrm a}}}
			{K_{\mathrm a}^{m_{\mathrm a}}+s^{m_{\mathrm a}}},
			\qquad
			h(s):=\frac{s}{K_{\mathrm c}+s}.
		\end{gathered}
		\]
		Proposition~\ref{lem:appendix-cell-time-average} and dominated convergence
		give
		\begin{equation}
			\label{eq:reaction-factor-convergences}
			G_m\to G\ \text{in }L^2(Q_T),\qquad
			s_{v,m}\to s_v\ \text{in }L^1(Q_T),\qquad
			a(v_m)\to a(v),\quad h(v_m)\to h(v)
			\ \text{in }L^r(Q_T)
		\end{equation}
		for every finite \(r\). Therefore,
		\[
		\|a(v_m)G_m-a(v)G\|_{L^1(Q_T)}
		+
		\|u_mh(v_m)-uh(v)\|_{L^1(Q_T)}
		\longrightarrow0,
		\]
		and the discrete reaction reconstructions satisfy
		\begin{equation}
			\label{eq:reaction-limits-main}
			\begin{aligned}
				\mathcal R_{u,m}
				&=
				\lambda_{\mathrm a}a(v_m)G_m-d_uu_m
				\longrightarrow
				\mathcal R_u(\cdot,\cdot,u,v),
				\\
				\mathcal R_{v,m}
				&=
				s_{v,m}-d_vv_m-\kappa_{\mathrm c}u_mh(v_m)
				\longrightarrow
				\mathcal R_v(\cdot,\cdot,u,v)
			\end{aligned}
			\qquad\text{in }L^1(Q_T).
		\end{equation}
		Since the point-sampled test reconstructions converge uniformly,
		\eqref{eq:reaction-limits-main} permits passage to the limit in both
		reaction terms.
		
		Finally, Proposition~\ref{lem:appendix-boundary-measure} gives
		\[
		\sum_{k}\sum_{e\in\mathcal E_{m,\mathrm v}}
		\overline\mu(\widehat e\times I_m^k)
		\psi(x_{K(e)},t_m^{k+1})
		\longrightarrow
		\int_{\Gamma_{\mathrm v}\times(0,T]}\psi\,d\overline\mu
		=
		\int_0^T\int_{\Gamma_{\mathrm v}}\psi\,d\mu_t\,dt.
		\]
		Since \(F_{K,e}^{v,k+1}=-|e|g_e^{k+1}\) on
		\(\mathcal E_{m,\mathrm v}\), this contribution appears with a positive
		sign after the exterior flux term is transferred to the right-hand
		side.
		
		Passing to the limit in the tested scheme using
		\eqref{eq:u-time-term-limit}--\eqref{eq:v-time-term-limit},
		Lemma~\ref{lem:complete-upwind-chemotactic-flux},
		\eqref{eq:reaction-limits-main}, and the boundary limit gives
		\eqref{eq:weak-formulation}. Hence \((u,v)\) is a nonnegative weak
		solution.
	\end{proof}
	
\section{Numerical consistency tests}
\label{sec:numerical}

\subsection{Implementation}
\label{subsec:numerical-implementation}

All computations use cell-centred two-point finite volumes on clipped
Voronoi meshes of \(\Omega=(0,1)^2\). The implementation checks positive
transmissibilities, unique face ownership, unit total area, and
generator--face orthogonality. We write \(N=\#\mathcal T\) and report
\[
h_{\mathcal D}
=\max_{K\in\mathcal T}\operatorname{diam}(K).
\]
These checks document the meshes used in the computations; no numerical
claim concerning mesh-uniform regularity constants is made.

For an interior face \(e=K|L\), the outgoing numerical fluxes are
\[
F^u_{K,e}
=D_u\tau_e(u_K-u_L)
+\chi\tau_e
\left[
(c_L-c_K)^+u_K-(c_L-c_K)^-u_L
\right],
\qquad
F^v_{K,e}=D_v\tau_e(v_K-v_L).
\]
Each interior contribution is assembled once and added to the two adjacent
cells with opposite signs. No boundary face enters the microglial flux, and
no clipping, artificial diffusion, or positivity correction is applied.

The vascular rate is
\[
q(t)
=Q_0\sin^2\!\left(
\pi\frac{t-t_{\rm on}}{L}
\right)
\mathbf 1_{[t_{\rm on},t_{\rm off}]}(t),
\qquad
L=t_{\rm off}-t_{\rm on},
\]
with
\[
q'(t)
=
\begin{cases}
	\dfrac{\pi Q_0}{L}
	\sin\!\left(
	2\pi\dfrac{t-t_{\rm on}}{L}
	\right),
	& t_{\rm on}<t<t_{\rm off},\\[1ex]
	0,
	& \text{otherwise}.
\end{cases}
\]
For
\[
a=\max(t_n,t_{\rm on}),
\qquad
b=\min(t_{n+1},t_{\rm off}),
\]
the exact pulse mass on \(I_n=(t_n,t_{n+1}]\) is \(Q_n=0\) if \(b\leq a\),
and otherwise
\[
Q_n
=Q_0\left[
\frac{b-a}{2}
-\frac{L}{4\pi}
\left(
\sin\frac{2\pi(b-t_{\rm on})}{L}
-\sin\frac{2\pi(a-t_{\rm on})}{L}
\right)
\right].
\]
In the manufactured experiment, each face
\(e\subset\Gamma_{\rm v}=\{1\}\times(0,1)\) receives the mass
\(|e|Q_n\). Since \(\mathcal H^1(\Gamma_{\rm v})=1\), the total assembled
boundary mass equals \(Q_n\) up to roundoff.

The bounded-domain Gaussian kernel is
\[
G_\sigma(X,Y)
=\exp\!\left(-\frac{|X-Y|^2}{2\sigma^2}\right),
\qquad
K_\sigma(X,Y)
=\frac{G_\sigma(X,Y)}{Z_\sigma(X)},
\]
where
\[
Z_\sigma(X)
=\prod_{j=1}^2
\sigma\sqrt{\frac{\pi}{2}}
\left[
\operatorname{erf}\!\left(
\frac{1-X_j}{\sqrt2\sigma}
\right)
+
\operatorname{erf}\!\left(
\frac{X_j}{\sqrt2\sigma}
\right)
\right].
\]
The discrete convolution is
\[
c_K=\sum_M\omega^\sigma_{KM}v_M,
\qquad
\omega^\sigma_{KM}
=
\frac{|M|G_\sigma(x_K,x_M)}
{\sum_J|J|G_\sigma(x_K,x_J)}.
\]
All weights are nonnegative, and their row sums are checked to within
\(10^{-12}\).

Backward Euler is used in time. At each step, a Picard iteration updates the
reaction-linearized signal equation, the discrete convolution, and the
reaction-linearized upwind microglial equation. Linear systems are solved
directly, and a failed nonlinear or linear solve aborts the computation.

For \(z\in\{u,v\}\), let
\[
\mathscr F_{z,K}^{n+1}
=
|K|\bigl(z_K^{n+1}-z_K^n\bigr)
+\Delta t\sum_{e\in\mathcal E_K}F_{K,e}^{z,n+1}
-\Delta t\,|K|
\left(
R_{z,K}^{n+1}+f_{z,K}^{n+1}
\right)
\]
denote the residual of the time-integrated conservative cell balance. For
the signal equation, the exterior vascular flux is included in the face
sum. The normalized nonlinear residual is
\[
{\rm Res}^{n+1}
=
\max_{z\in\{u,v\}}
\frac{
	\displaystyle
	\sum_{K\in\mathcal T}
	\left|\mathscr F_{z,K}^{n+1}\right|
}{
	\displaystyle
	\max\left\{
	1,\,
	\sum_K|K||z_K^{n+1}|,\,
	\sum_K|K||z_K^n|
	\right\}
}.
\]
Thus both the numerator and denominator have the scale of an integrated
cell mass. The full residual is evaluated after the Picard iteration, and
all reported time steps satisfy
\[
{\rm Res}^{n+1}\leq 10^{-10}.
\]

The one-step mass-balance defect is
\[
\mathcal B_z^{n+1}
=M_z^{n+1}-M_z^n
-\Delta t\sum_K|K|
\left(
R_{z,K}^{n+1}+f_{z,K}^{n+1}
\right)
-\delta_{zv}Q_n.
\]

\subsection{Exact manufactured-solution test}
\label{subsec:manufactured}

The test uses the absolutely continuous boundary measure
\[
\mu_t
=q(t)\mathcal H^1\!\restriction_{\Gamma_{\rm v}},
\qquad
\Gamma_{\rm v}=\{1\}\times(0,1).
\]
This datum belongs to the measure class considered analytically. The
experiment does not provide a numerical benchmark for a spatially atomic
boundary source.

Writing \(X=(x_1,x_2)\), define
\[
v_{\rm ex}(X,t)
=v_b+\frac{q(t)x_1^2}{2D_v}.
\]
Then
\[
\partial_tv_{\rm ex}
=\frac{q'(t)x_1^2}{2D_v},
\qquad
\nabla v_{\rm ex}
=\left(
\frac{q(t)x_1}{D_v},0
\right),
\qquad
\Delta v_{\rm ex}
=\frac{q(t)}{D_v},
\]
and
\[
D_v\partial_\nu v_{\rm ex}=q(t)
\quad\text{on }\Gamma_{\rm v},
\qquad
\partial_\nu v_{\rm ex}=0
\quad\text{on }\partial\Omega\setminus\Gamma_{\rm v}.
\]

For the exact sensed field, define
\[
E_0(x)=e^{-x^2/(2\sigma^2)},
\qquad
E_1(x)=e^{-(1-x)^2/(2\sigma^2)},
\]
\[
I_j(x)
=\int_0^1s^j e^{-(x-s)^2/(2\sigma^2)}\,ds,
\qquad
R_\sigma(x)=\frac{I_2(x)}{I_0(x)}.
\]
The required expressions are
\[
I_0(x)
=\sigma\sqrt{\frac{\pi}{2}}
\left[
\operatorname{erf}\!\left(
\frac{1-x}{\sqrt2\sigma}
\right)
+
\operatorname{erf}\!\left(
\frac{x}{\sqrt2\sigma}
\right)
\right],
\]
\[
I_2(x)
=(x^2+\sigma^2)I_0(x)
+\sigma^2xE_0(x)
-\sigma^2(1+x)E_1(x),
\]
\[
I_0'(x)=E_0(x)-E_1(x),
\qquad
I_2'(x)
=2xI_0(x)+2\sigma^2E_0(x)
-(1+2\sigma^2)E_1(x),
\]
and
\[
R_\sigma'(x)
=
\frac{
	I_2'(x)I_0(x)-I_2(x)I_0'(x)
}{I_0(x)^2}.
\]
The Gaussian factor in the second coordinate cancels in the normalized
convolution, so
\[
c_{\rm ex}(X,t)
=v_b+\frac{q(t)}{2D_v}R_\sigma(x_1),
\]
\[
\partial_tc_{\rm ex}(X,t)
=\frac{q'(t)}{2D_v}R_\sigma(x_1),
\qquad
\nabla c_{\rm ex}(X,t)
=\frac{q(t)}{2D_v}
\left(R_\sigma'(x_1),0\right).
\]
These expressions are used for the exact reference and manufactured
forcing. The finite-volume computation retains the row-normalized discrete
convolution.

Set
\[
\gamma=\frac{\chi}{D_u},
\qquad
w(X,t)
=1+\varepsilon e^{-\omega t}
\cos(\pi x_1)\cos(\pi x_2),
\]
and
\[
u_{\rm ex}(X,t)
=e^{\gamma c_{\rm ex}(X,t)}w(X,t).
\]
The required derivatives are
\[
w_t
=-\omega\varepsilon e^{-\omega t}
\cos(\pi x_1)\cos(\pi x_2),
\]
\[
\nabla w
=-\pi\varepsilon e^{-\omega t}
\begin{pmatrix}
	\sin(\pi x_1)\cos(\pi x_2)\\
	\cos(\pi x_1)\sin(\pi x_2)
\end{pmatrix},
\qquad
\Delta w
=-2\pi^2\varepsilon e^{-\omega t}
\cos(\pi x_1)\cos(\pi x_2).
\]
Since \(\gamma=\chi/D_u\),
\[
D_u\nabla u_{\rm ex}
-\chi u_{\rm ex}\nabla c_{\rm ex}
=D_ue^{\gamma c_{\rm ex}}\nabla w,
\]
whose normal component vanishes on \(\partial\Omega\).

The physical reaction terms remain active. The manufactured sources are
\[
f_u
=e^{\gamma c_{\rm ex}}
\left[
w_t+\gamma w\,\partial_tc_{\rm ex}
-D_u\Delta w
-\chi\nabla w\cdot\nabla c_{\rm ex}
\right]
-R_u(X,t,u_{\rm ex},v_{\rm ex}),
\]
and
\[
f_v
=\frac{q'(t)x_1^2}{2D_v}
-q(t)-s_v(X,t)+d_vv_{\rm ex}
+\frac{\kappa_cu_{\rm ex}v_{\rm ex}}
{K_c+v_{\rm ex}}.
\]
Direct substitution verifies the two augmented equations and their boundary
conditions. Since the added sources need not be quasi-positive,
nonnegativity in this experiment is reported only as a numerical
observation.

The parameters are
\[
\begin{gathered}
	D_u=0.01,\quad D_v=0.05,\quad \chi=0.002,\quad
	d_v=0.15,\quad \sigma=0.08,\quad T=0.4,\\
	\varepsilon=0.05,\quad \omega=0.1,\quad v_b=0.05,\quad
	Q_0=0.08,\quad t_{\rm on}=0.01,\quad t_{\rm off}=0.39,\\
	\lambda_a=0.35,\quad u_r=1.4,\quad d_u=0.04,\quad
	m_a=2,\quad K_a=0.2,\quad s_v=0,\quad
	\kappa_c=0.15,\quad K_c=0.1.
\end{gathered}
\]
Picard iteration is capped at \(40\) iterations and uses no relaxation.
Initial values are polygon averages of the exact initial data.

The manufactured sources are evaluated as cell--time averages, with time
slabs split at \(t_{\rm on}\) and \(t_{\rm off}\). Source averages use an
eight-point polygon Duffy rule and a six-point time rule. Reference cell
averages use a 12-point polygon rule, while the error norms use a ten-point
polygon rule and a four-point time rule. These quadrature rules are kept
fixed throughout each refinement sequence.

For \(p\in\{1,2\}\), and also \(p=q=1.25\) for \(v\), define
\[
\bar z_K(t)
=\frac{1}{|K|}\int_Kz_{\rm ex}(X,t)\,dX,
\]
and
\[
E_{p,p}(z)^p
=\sum_n\int_{I_n}
\sum_K|K|
\left|
z_K^{n+1}-\bar z_K(t)
\right|^p\,dt.
\]
The finite-volume reconstruction is right-continuous and constant on every
time slab.

The first sequence uses five perturbed non-Cartesian Voronoi meshes with
\[
\Delta t\simeq0.005h_{\mathcal D}^2.
\]
For successive discretizations, the reported path EOC is
\[
{\rm EOC}_{\rm path}
=
\frac{\log(E_i/E_{i+1})}
{\log(h_{\mathcal D_i}/h_{\mathcal D_{i+1}})}.
\]
Because \(h_{\mathcal D}\) and \(\Delta t\) decrease simultaneously, these
values are descriptive slopes along the selected joint refinement path.
They are not interpreted as a separate asymptotic spatial order.

The temporal study uses a fixed mesh with \(N=100\) control volumes. Its
fine-time reference uses 1280 time steps and is checked against a 2560-step
trajectory on the common time partition. The temporal EOC is
\[
{\rm EOC}_{t}
=
\frac{\log(E_{\Delta t}/E_{\Delta t/2})}{\log 2}.
\]

Here \(N_{\Gamma_{\rm v}}\) in Table~\ref{tab:ms-spatial} denotes the number
of boundary faces contained in \(\Gamma_{\rm v}\). The average Picard count
is taken over all completed time steps.

\FloatBarrier
\begin{table}[h]
	\centering
	\scriptsize
	\caption{Errors along the joint refinement path
		\(\Delta t\simeq0.005h_{\mathcal D}^2\) on perturbed Voronoi meshes.
		Here \(N=\#\mathcal T\), \(q=1.25\), and
		\(N_{\Gamma_{\rm v}}\) is the number of vascular boundary faces.
		Each error entry is error/path EOC. The path EOCs are descriptive
		finite-resolution slopes and are not interpreted as separate spatial
		convergence orders. The average Picard count is taken over all completed
		time steps.}
	\label{tab:ms-spatial}
	
	\resizebox{\linewidth}{!}{%
		\begin{tabular}{@{}rrrrrrrr@{}}
			\toprule
			\(N\)
			& \(h_{\mathcal D}\)
			& \(\Delta t\)
			& \(E_{1,1}(u)\)
			& \(E_{2,2}(u)\)
			& \(E_{1,1}(v)\)
			& \(E_{2,2}(v)\)
			& \(E_{q,q}(v)\)
			\\
			\midrule
			16
			& \(3.754\times10^{-1}\)
			& \(7.042\times10^{-4}\)
			& \(7.15\times10^{-5}\;/\;\text{--}\)
			& \(1.56\times10^{-4}\;/\;\text{--}\)
			& \(6.03\times10^{-4}\;/\;\text{--}\)
			& \(1.42\times10^{-3}\;/\;\text{--}\)
			& \(8.18\times10^{-4}\;/\;\text{--}\)
			\\
			25
			& \(2.941\times10^{-1}\)
			& \(4.324\times10^{-4}\)
			& \(5.05\times10^{-5}\;/\;1.42\)
			& \(1.12\times10^{-4}\;/\;1.35\)
			& \(3.81\times10^{-4}\;/\;1.87\)
			& \(1.06\times10^{-3}\;/\;1.19\)
			& \(5.45\times10^{-4}\;/\;1.66\)
			\\
			36
			& \(2.493\times10^{-1}\)
			& \(3.108\times10^{-4}\)
			& \(3.96\times10^{-5}\;/\;1.47\)
			& \(8.87\times10^{-5}\;/\;1.42\)
			& \(3.00\times10^{-4}\;/\;1.45\)
			& \(8.58\times10^{-4}\;/\;1.28\)
			& \(4.31\times10^{-4}\;/\;1.43\)
			\\
			49
			& \(2.150\times10^{-1}\)
			& \(2.311\times10^{-4}\)
			& \(3.03\times10^{-5}\;/\;1.82\)
			& \(7.01\times10^{-5}\;/\;1.58\)
			& \(2.26\times10^{-4}\;/\;1.93\)
			& \(7.23\times10^{-4}\;/\;1.16\)
			& \(3.36\times10^{-4}\;/\;1.67\)
			\\
			64
			& \(1.883\times10^{-1}\)
			& \(1.773\times10^{-4}\)
			& \(2.49\times10^{-5}\;/\;1.49\)
			& \(6.05\times10^{-5}\;/\;1.11\)
			& \(1.87\times10^{-4}\;/\;1.43\)
			& \(6.26\times10^{-4}\;/\;1.08\)
			& \(2.82\times10^{-4}\;/\;1.32\)
			\\
			\bottomrule
		\end{tabular}%
	}
	
	\vspace{1mm}
	
	\resizebox{\linewidth}{!}{%
		\begin{tabular}{@{}rrrrrrrr@{}}
			\toprule
			\(N\)
			& \(\min_{n,K}u_K^n\)
			& \(\min_{n,K}v_K^n\)
			& \(\max_n {\rm Res}^{n+1}\)
			& \(\max_n|\mathcal B_u^{n+1}|\)
			& \(\max_n|\mathcal B_v^{n+1}|\)
			& Picard it.\ max/avg
			& \(N_{\Gamma_{\rm v}}\)
			\\
			\midrule
			16
			& \(9.691\times10^{-1}\)
			& \(4.522\times10^{-2}\)
			& \(7.99\times10^{-12}\)
			& \(5.10\times10^{-16}\)
			& \(5.59\times10^{-15}\)
			& \(4/3.73\)
			& 4
			\\
			25
			& \(9.657\times10^{-1}\)
			& \(4.467\times10^{-2}\)
			& \(5.39\times10^{-12}\)
			& \(4.47\times10^{-16}\)
			& \(2.36\times10^{-15}\)
			& \(4/3.39\)
			& 5
			\\
			36
			& \(9.638\times10^{-1}\)
			& \(4.531\times10^{-2}\)
			& \(2.54\times10^{-12}\)
			& \(4.50\times10^{-16}\)
			& \(8.05\times10^{-16}\)
			& \(3/3.00\)
			& 6
			\\
			49
			& \(9.627\times10^{-1}\)
			& \(4.614\times10^{-2}\)
			& \(1.06\times10^{-12}\)
			& \(5.16\times10^{-16}\)
			& \(2.63\times10^{-16}\)
			& \(3/3.00\)
			& 7
			\\
			64
			& \(9.620\times10^{-1}\)
			& \(4.650\times10^{-2}\)
			& \(1.29\times10^{-12}\)
			& \(4.67\times10^{-16}\)
			& \(1.25\times10^{-16}\)
			& \(3/2.98\)
			& 8
			\\
			\bottomrule
		\end{tabular}%
	}
\end{table}

\begin{table}[h]
	\centering
	\scriptsize
	\caption{Fixed-mesh temporal errors against the checked fine-time reference
		on the \(N=100\) control-volume mesh. Each error entry is error/EOC.
		Solver and balance diagnostics are maxima over all completed time steps.}
	\label{tab:ms-temporal}
	
	\resizebox{\linewidth}{!}{%
		\begin{tabular}{@{}rrrrrrrrrr@{}}
			\toprule
			\(\Delta t\)
			& \(E_{1,1}(u)\)
			& \(E_{2,2}(u)\)
			& \(E_{1,1}(v)\)
			& \(E_{2,2}(v)\)
			& \(E_{q,q}(v)\)
			& \(\max_n {\rm Res}^{n+1}\)
			& Picard it.\ max/avg
			& \(\max_n|\mathcal B_u^{n+1}|\)
			& \(\max_n|\mathcal B_v^{n+1}|\)
			\\
			\midrule
			\(1.000\times10^{-1}\)
			& \(5.91\times10^{-3}\;/\;\text{--}\)
			& \(1.47\times10^{-2}\;/\;\text{--}\)
			& \(2.62\times10^{-2}\;/\;\text{--}\)
			& \(6.71\times10^{-2}\;/\;\text{--}\)
			& \(3.68\times10^{-2}\;/\;\text{--}\)
			& \(1.98\times10^{-12}\)
			& \(8/8.00\)
			& \(1.35\times10^{-16}\)
			& \(1.98\times10^{-13}\)
			\\
			\(5.000\times10^{-2}\)
			& \(2.89\times10^{-3}\;/\;1.03\)
			& \(7.68\times10^{-3}\;/\;0.93\)
			& \(1.32\times10^{-2}\;/\;0.99\)
			& \(3.59\times10^{-2}\;/\;0.90\)
			& \(1.88\times10^{-2}\;/\;0.97\)
			& \(5.41\times10^{-12}\)
			& \(7/6.75\)
			& \(1.96\times10^{-16}\)
			& \(2.70\times10^{-13}\)
			\\
			\(2.500\times10^{-2}\)
			& \(1.42\times10^{-3}\;/\;1.03\)
			& \(3.85\times10^{-3}\;/\;1.00\)
			& \(6.59\times10^{-3}\;/\;1.00\)
			& \(1.82\times10^{-2}\;/\;0.98\)
			& \(9.45\times10^{-3}\;/\;0.99\)
			& \(5.47\times10^{-12}\)
			& \(6/5.88\)
			& \(4.41\times10^{-16}\)
			& \(1.37\times10^{-13}\)
			\\
			\bottomrule
		\end{tabular}%
	}
\end{table}
\FloatBarrier

All reported errors decrease along the joint refinement path. On the fixed
mesh, the temporal EOCs range from \(0.90\) to \(1.03\). This is compatible
with first-order backward Euler over the tested finite range, but is not
presented as an independent convergence theorem. The difference between the
1280- and 2560-step reference trajectories is \(0.65\)--\(0.78\%\) of the
smallest reported temporal error. Across the reported computations, the
largest nonlinear residual is \(7.99\times10^{-12}\), the largest
mass-balance defect is approximately \(2.7\times10^{-13}\), and no negative
cell values are observed.

\begin{figure}[!t]
	\centering
	\begin{minipage}[t]{0.485\linewidth}
		\centering
		\includegraphics[width=\linewidth]
		{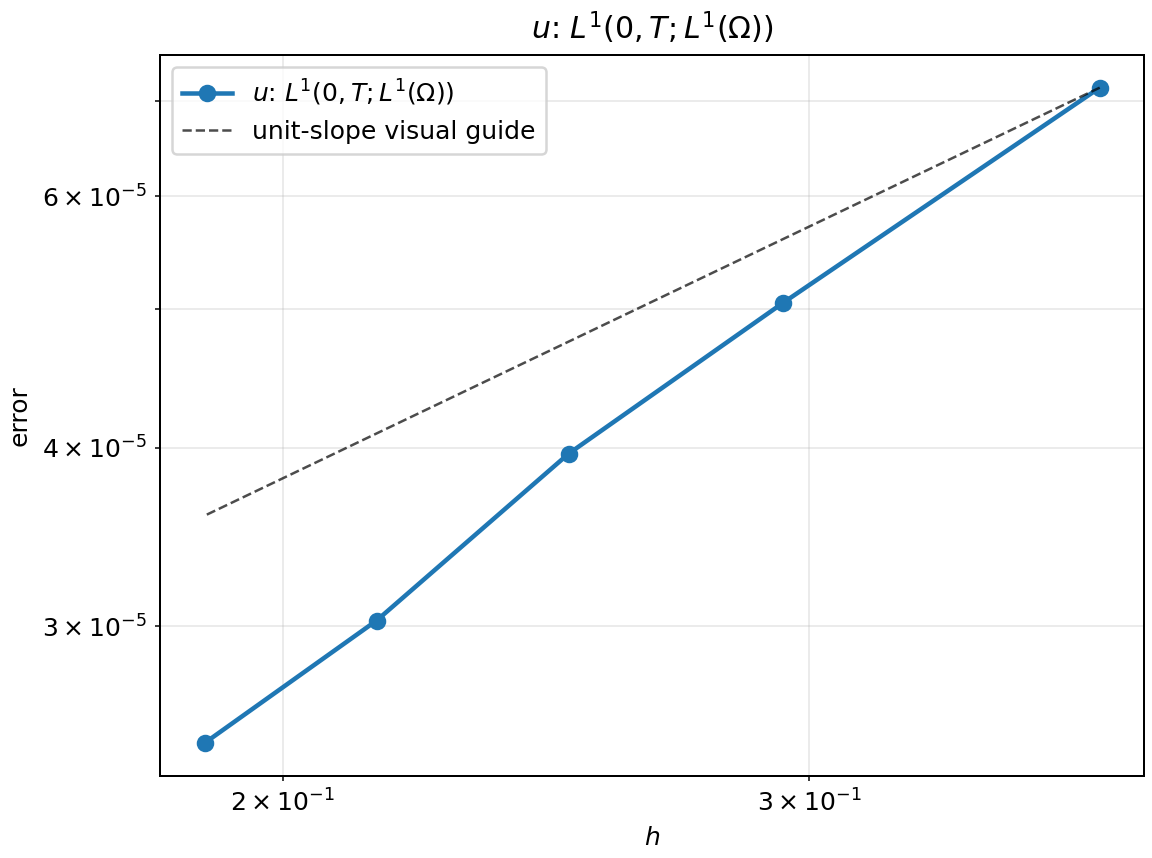}
	\end{minipage}\hfill
	\begin{minipage}[t]{0.485\linewidth}
		\centering
		\includegraphics[width=\linewidth]
		{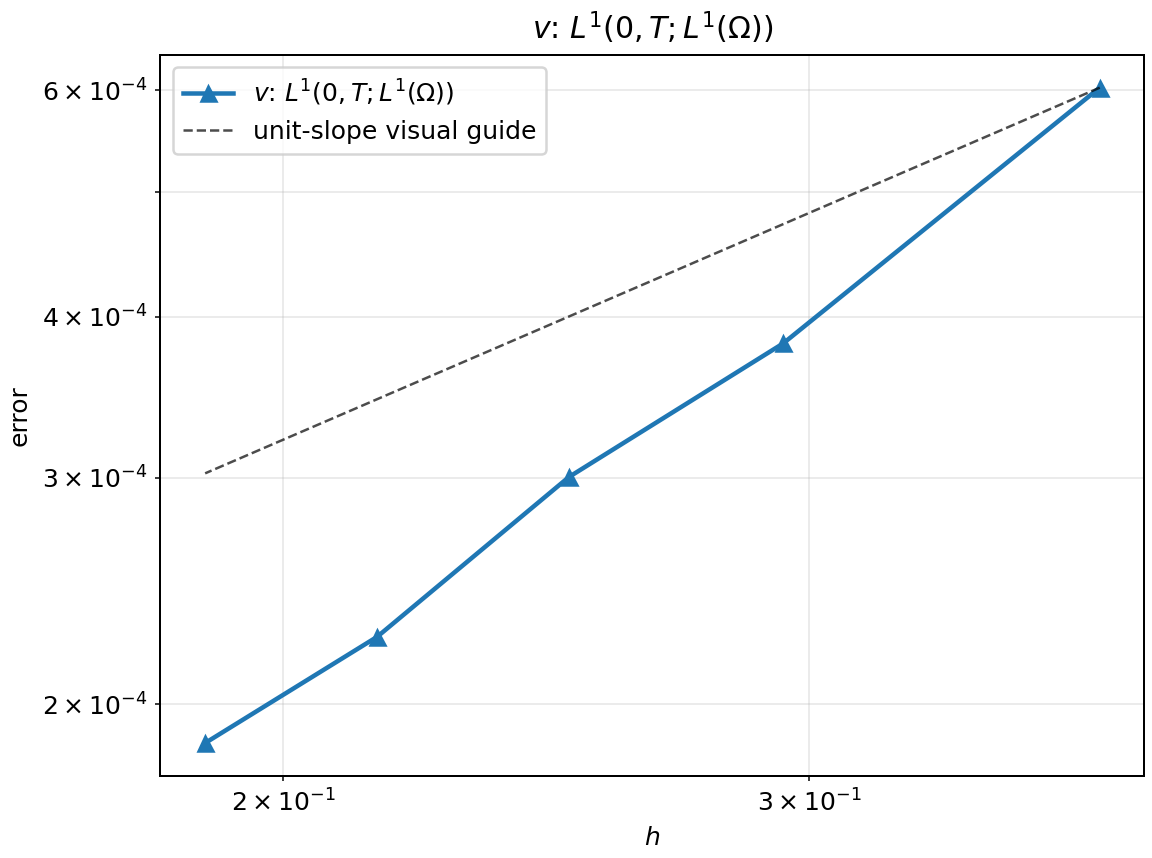}
	\end{minipage}
	
	\medskip
	
	\begin{minipage}[t]{0.485\linewidth}
		\centering
		\includegraphics[width=\linewidth]
		{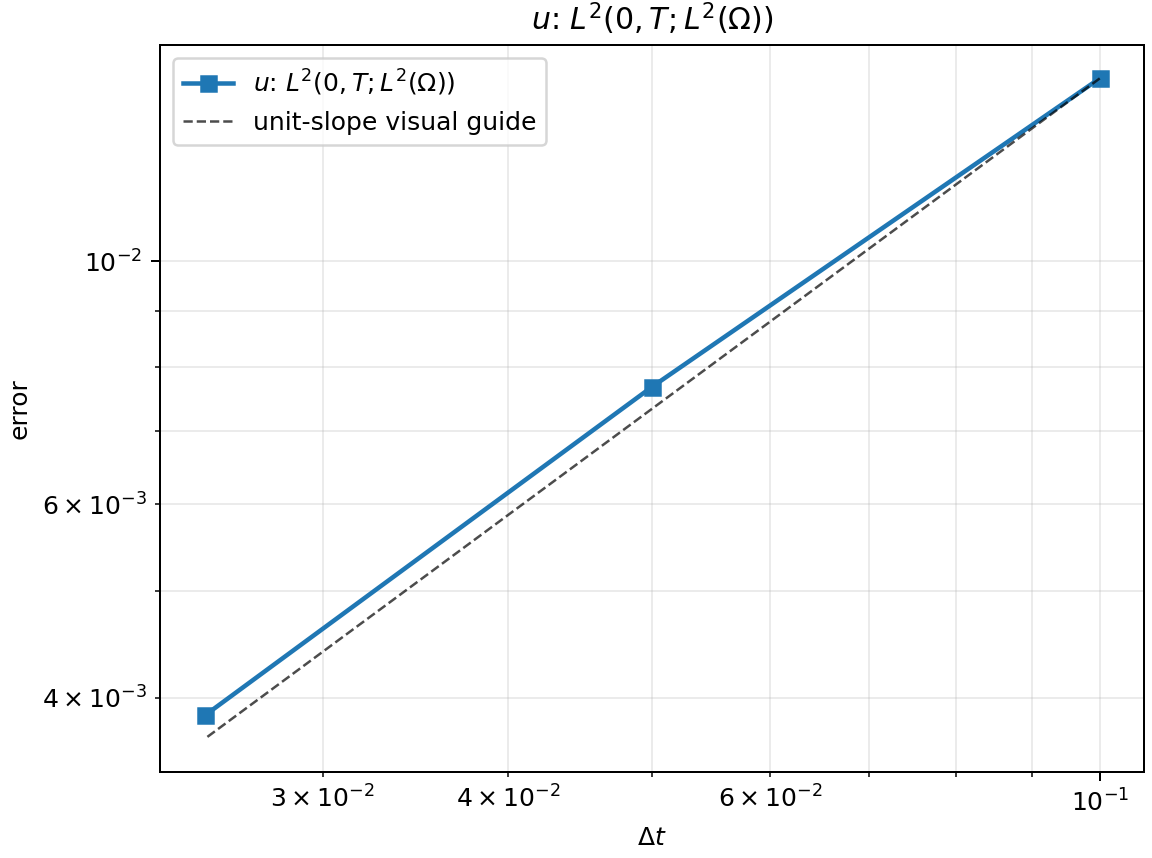}
	\end{minipage}\hfill
	\begin{minipage}[t]{0.485\linewidth}
		\centering
		\includegraphics[width=\linewidth]
		{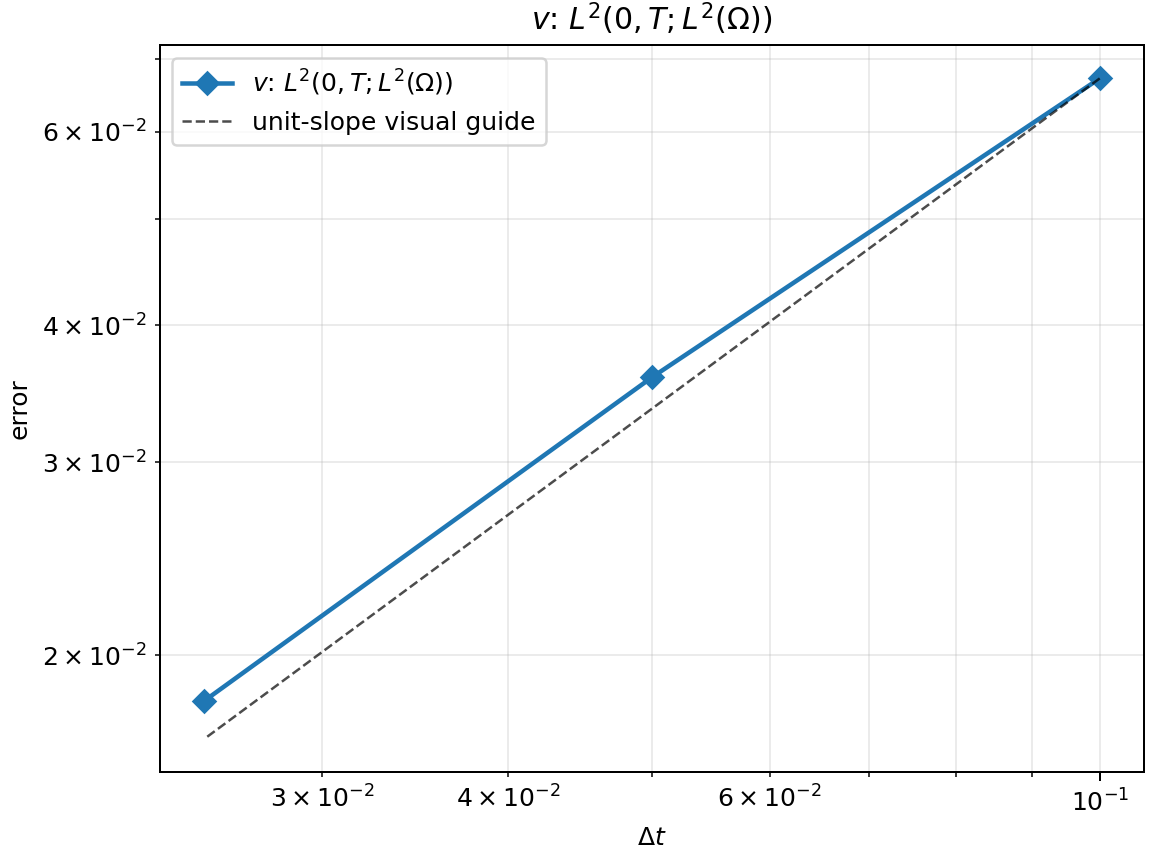}
	\end{minipage}
	
	\caption{Selected manufactured-solution errors. The upper panels show the
		errors along the joint refinement path
		\(\Delta t\simeq0.005h_{\mathcal D}^2\); the lower panels show fixed-mesh
		temporal refinement against the checked fine-time reference. Dashed
		slope-one lines are included only as visual guides.}
	\label{fig:manufactured-convergence}
\end{figure}
	\section{Conclusion and perspectives}
	\label{sec:conclusion}
	
	For each fixed sensing length and on domains admitting a uniformly regular
	orthogonal mesh sequence, we proved subsequential convergence of the
	finite-volume approximations to a nonnegative weak solution of the nonlocal
	microglia--amyloid system with a spatially measure-valued vascular influx.
	The proof combines discrete positivity, mass and energy estimates, global
	truncation estimates for the signal, space--time compactness, and consistency
	of the diffusive and complete upwind chemotactic fluxes
	\cite{EymardGallouetHerbin2000,DroniouGallouetHerbin2003,
		BoccardoGallouet1989}.
	
	The numerical test reports decreasing errors along a joint space--time
	refinement path toward an exact smooth full-edge-flux manufactured solution
	of the augmented system. On a fixed mesh, the temporal errors are compatible
	with first-order backward Euler over the tested time-step range. The reported
	nonlinear algebraic residuals and mass-balance defects are small. No separate
	asymptotic spatial order or quantitative error estimate is inferred from
	these computations.
	
	The manufactured boundary measure is absolutely continuous with respect to
	boundary arclength. Singular boundary measures are covered by the convergence
	analysis but are not benchmarked numerically here. No uniqueness result,
	quantitative error estimate, or nonlocal-to-local limit is established.
	Future work includes error analysis, numerical tests for singular vascular
	inputs, parameter identification, uncertainty analysis, and optimal control.
	
	\section*{Statements and Declarations}
	
	\paragraph{Funding.}
	This work was supported by Cadi Ayyad University (UCA)
	through the Young Researchers of UCA research project call,
	second edition (2026), under the project entitled
	``Mathematical and numerical modelling of Alzheimer's disease
	progression: integration of artificial-intelligence techniques
	into Keller--Segel dynamics.''
	
	\paragraph{Competing interests.}
	The author has no relevant financial or non-financial interests
	to disclose.
	
	\bibliographystyle{plain}
	\bibliography{references}
	
	\appendix
	
	\section{Auxiliary reconstruction and consistency results}
	\label{app:auxiliary-results}
	
	Throughout this appendix,
	\((\mathcal D_m,\Delta t_m)_{m\geq1}\) is a uniformly regular sequence
	of admissible orthogonal discretizations, with
	\[
	h_m:=h_{\mathcal D_m}\to0,\qquad
	\Delta t_m\to0,\qquad
	I_m^k:=(t_m^k,t_m^{k+1}].
	\]
	For \(e=K|L\in\mathcal E_{m,\mathrm{int}}\), set
	\[
	\begin{gathered}
		\nu_e:=\frac{x_L-x_K}{d_{KL}},\qquad
		\mathbb M_m:=d\,\nu_e\otimes\nu_e\ \text{on }D_e,\qquad
		\mathbb M_m:=0\ \text{on boundary subdiamonds},\\
		\|\mathbb M_m\|_{L^\infty(\Omega)}\leq d,\qquad
		\mathbb M_m^\top=\mathbb M_m,\qquad
		\mathbb M_m^2=d\mathbb M_m .
	\end{gathered}
	\]
	We also write
	\[
	\delta_ez_m^{k+1}:=z_{m,L}^{k+1}-z_{m,K}^{k+1},
	\qquad
	\mathcal A_m(w_m,z_m):=
	\sum_k\Delta t_m
	\sum_{e=K|L\in\mathcal E_{m,\mathrm{int}}}
	\tau_e\,\delta_ew_m^{k+1}\delta_ez_m^{k+1}.
	\]

	The auxiliary results are organized according to their role:
	Proposition~\ref{prop:appendix-geometric-consistency} treats the mesh
	geometry and diffusion terms,
	Proposition~\ref{prop:appendix-spatial-reconstructions} the spatial
	reconstructions, and
	Proposition~\ref{prop:appendix-elementary-reconstructions} the temporal,
	data, and boundary-measure reconstructions.
	
	\begin{proposition}[Geometric and diffusive consistency]
		\label{prop:appendix-geometric-consistency}
		\label{lem:appendix-mesh-tensor}
		\label{lem:appendix-point-sampled-tests}
		\label{cor:appendix-diffusion-consistency}
		The following properties hold.
		
		\begin{enumerate}
			\item[(i)]
			\[
			\mathbb M_m\rightharpoonup^\ast I
			\quad\text{in }
			L^\infty(\Omega)^{d\times d}
			\ \text{and }L^\infty(Q_T)^{d\times d}.
			\]
			
			\item[(ii)]
			For \(\zeta\in C^1([0,T];C^2(\overline\Omega))\), set
			\(\zeta_{m,K}^{k+1}:=\zeta(x_K,t_m^{k+1})\). Then
			\begin{equation}
				\label{eq:appendix-test-gradient}
				\nabla_{\mathcal D_m}\zeta_m
				=\mathbb M_m\nabla\zeta+r_m^\zeta,
				\qquad
				\|r_m^\zeta\|_{L^\infty(Q_T)}
				\leq C_\zeta(h_m+\Delta t_m).
			\end{equation}
			
			\item[(iii)]
			If \(1<p<\infty\) and
			\[
			\nabla_{\mathcal D_m}w_m\rightharpoonup\nabla w
			\quad\text{in }L^p(Q_T)^d,
			\]
			then
			\[
			\mathcal A_m(w_m,\zeta_m)
			\longrightarrow
			\int_{Q_T}\nabla w\cdot\nabla\zeta\,dx\,dt.
			\]
		\end{enumerate}
	\end{proposition}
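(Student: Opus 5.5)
The proof treats the three items separately. Only (i) requires a geometric argument; (ii) is a Taylor expansion, and (iii) reduces to (ii) through an algebraic identity for \(\mathbb M_m\). For \(K\in\mathcal T\) and \(e\in\mathcal E_K\), write \(D_{K,e}:=\operatorname{conv}(x_K,e)\), \(d_{K,e}:=\operatorname{dist}(x_K,e)\), and \(\nu_{K,e}\) for the outward unit normal to \(K\) on \(e\). Then \(|D_{K,e}|=|e|d_{K,e}/d\), and on an interior face \(\nu_{K,e}=\nu_e\) by orthogonality.

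\emph{Item (i).} Since \(\|\mathbb M_m\|_{L^\infty}\leq d\), a density argument shows it suffices to test against \(\psi\in C_c^1(\Omega)\); the space--time statement then follows by taking products with functions of \(t\). I would first replace \(\int_\Omega\psi\,\mathbb M_m\,dx\) by
\[
\sum_K\psi(x_K)\sum_{e\in\mathcal E_{K}\cap\mathcal E_{\mathrm{int}}}|e|d_{K,e}\,\nu_e\otimes\nu_e .
\]
The error is \(O(h_m)\), because \(\mathbb M_m=d\,\nu_e\otimes\nu_e\) on \(D_{K,e}\) and \(\psi\) is Lipschitz. Boundary subdiamonds do not contribute: \(\psi\) has compact support, and their total measure is \(O(h_m)\) in any case.

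The key identity is the divergence theorem applied to \(x\mapsto (x-x_K)\otimes\nabla\), namely
\[
\int_{\partial K}(x-x_K)\otimes\nu\,d\mathcal H^{d-1}=|K|\,I .
\]
On each face \(e\in\mathcal E_K\), split \(\overline x_e-x_K=d_{K,e}\nu_{K,e}+t_e\), where \(\overline x_e\) is the barycenter of \(e\) and \(t_e\) is tangential to \(e\). This gives
\[
\sum_{e\in\mathcal E_K}|e|d_{K,e}\,\nu_{K,e}\otimes\nu_{K,e}=|K|I-\sum_{e\in\mathcal E_K}|e|\,t_e\otimes\nu_{K,e}.
\]
For an interior face \(e=K|L\), the vector \(x_L-x_K\) is normal to \(e\), so \(x_K\) and \(x_L\) have the same orthogonal projection onto the hyperplane of \(e\). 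Hence \(t_e\) is the same from both sides while the normal changes sign. After multiplying by \(\psi(x_K)\) and summing, the tangential terms therefore pair into contributions
\[
(\psi(x_K)-\psi(x_L))\,|e|\,t_e\otimes\nu_e=O\bigl(h_m^2|e|\bigr).
\]
The regularity assumption \(d_{KL}\geq\zeta h_K\) then bounds \(\sum_e h_m^2|e|\) by \(Ch_m\sum_e|e|d_{KL}\leq Ch_m\). Tangential terms on exterior faces vanish because \(\psi\) has compact support. Finally, \(\sum_K\psi(x_K)|K|\to\int\psi\). The main obstacle is this bookkeeping: showing that the tangential barycentric offsets cancel across each interior face, and that the regularity constants convert \(h_m^2|e|\) into \(O(h_m)\) times a measure.

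\emph{Item (ii).} On \(D_e\times I_m^k\), a Taylor expansion gives
\[
\frac{\zeta(x_L,t_m^{k+1})-\zeta(x_K,t_m^{k+1})}{d_{KL}}=\nabla\zeta(x,t)\cdot\nu_e+O(h_m+\Delta t_m),
\]
uniformly, using \(\zeta\in C^1([0,T];C^2(\overline\Omega))\) and \(|x-x_K|\leq h_m\). Multiplying by \(d\,\nu_e\) yields \(\mathbb M_m\nabla\zeta+r_m^\zeta\). On boundary subdiamonds both sides vanish, so \(r_m^\zeta=0\) there.

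\emph{Item (iii).} Since \(|D_e|=|e|d_{KL}/d\) and \(\nabla_{\mathcal D_m}\) carries the factor \(d\), we have
\[
\mathcal A_m(w_m,\zeta_m)=\frac1d\int_{Q_T}\nabla_{\mathcal D_m}w_m\cdot\nabla_{\mathcal D_m}\zeta_m\,dx\,dt .
\]
Inserting (ii), the remainder is bounded by \(\|\nabla_{\mathcal D_m}w_m\|_{L^1}\|r_m^\zeta\|_{L^\infty}\to0\), since weakly convergent sequences are bounded in \(L^p\). Because \(\nabla_{\mathcal D_m}w_m\) is parallel to \(\nu_e\) on \(D_e\), we have \(\mathbb M_m\nabla_{\mathcal D_m}w_m=d\,\nabla_{\mathcal D_m}w_m\). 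Together with the symmetry of \(\mathbb M_m\), this gives
\[
\frac1d\int_{Q_T}\nabla_{\mathcal D_m}w_m\cdot\mathbb M_m\nabla\zeta\,dx\,dt=\int_{Q_T}\nabla_{\mathcal D_m}w_m\cdot\nabla\zeta\,dx\,dt .
\]
The right-hand side converges to \(\int_{Q_T}\nabla w\cdot\nabla\zeta\,dx\,dt\) by weak convergence against the fixed function \(\nabla\zeta\in L^{p'}\). In particular, (iii) does not require (i).
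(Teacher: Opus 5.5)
Your proposal is correct. Items (ii) and (iii) follow the paper's argument: the integral Taylor formula along $[x_K,x_L]$, then the identity $\mathcal A_m=\frac1d\int_{Q_T}\nabla_{\mathcal D_m}w_m\cdot\nabla_{\mathcal D_m}\zeta_m$ together with $\mathbb M_m\nabla_{\mathcal D_m}w_m=d\,\nabla_{\mathcal D_m}w_m$.

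For (i) you take a genuinely different route. The paper never uses face barycentres or tangential offsets. It samples the coordinate functions, $X^{(i)}_{m,K}=(x_K)_i$, and uses orthogonality to get $\nabla_{\mathcal D_m}X^{(i)}_m=\mathbb M_m e_i$. It then compares diamond and face averages of a vector field $\Phi\in C_c^1(\Omega)^d$. Discrete integration by parts turns $\sum_e (x_L-x_K)_i\int_e\Phi\cdot\nu_e\,dS$ into $-\int_\Omega X^{(i)}_m\nabla\cdot\Phi\,dx\to\int_\Omega e_i\cdot\Phi\,dx$. The cross-face cancellation is automatic there, because the flux $\int_e\Phi\cdot\nu_e$ is shared by the two neighbours. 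You instead use the cell-wise Gauss identity $\sum_{e\in\mathcal E_K}|e|(\overline x_e-x_K)\otimes\nu_{K,e}=|K|I$. You then show that the tangential defects telescope across interior faces, because $x_K$ and $x_L$ have the same projection onto the face hyperplane. Your version is longer but identifies the defect explicitly. For example, it vanishes cell by cell when each $x_K$ projects onto its face barycentres, as on rectangles. The paper's version is shorter and reuses the same summation-by-parts mechanism that drives the rest of the convergence proof.

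Two small repairs are needed in your bookkeeping.

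\emph{The $O(h_m^2|e|)$ bound.} Bounding each paired term by $O(h_m^2|e|)$ and then using $d_{KL}\geq\zeta h_K$ to obtain $h_m^2|e|\leq Ch_m|e|d_{KL}$ would require $h_m\lesssim d_{KL}$, that is, quasi-uniformity. This is not assumed; the regularity hypothesis controls only the local diameter. Keep the factor $d_{KL}$ instead: $|\psi(x_K)-\psi(x_L)|\leq\|\nabla\psi\|_\infty d_{KL}$ and $|t_e|\leq\operatorname{diam}(K)\leq h_m$. Each pair is then bounded by $\|\nabla\psi\|_\infty h_m|e|d_{KL}$. Since $\sum_e|e|d_{KL}=d\sum_e|D_e|\leq d|\Omega|$, this gives $O(h_m)$ with no regularity hypothesis at all.

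\emph{The meaning of $d_{K,e}$.} Both $|D_{K,e}|=|e|d_{K,e}/d$ and $(\overline x_e-x_K)\cdot\nu_{K,e}=d_{K,e}$ require $d_{K,e}$ to be the distance from $x_K$ to the hyperplane of $e$, not to the set $e$. The two differ when the foot of the perpendicular falls outside $e$, as in obtuse Voronoi configurations. This hyperplane convention is also the one behind the paper's $|D_e|=|e|d_{KL}/d$, and with it your argument for (i) goes through.
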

	
	\begin{proof}
		For \(i=1,\ldots,d\), let \(X_{m,K}^{(i)}:=(x_K)_i\). Then
		\[
		\|X_m^{(i)}-x_i\|_{L^\infty(\Omega)}\leq h_m,
		\qquad
		\nabla_{\mathcal D_m}X_m^{(i)}=\mathbb M_me_i.
		\]
		For \(\Phi\in C_c^1(\Omega)^d\), comparison of diamond and face averages
		gives
		\[
		\int_\Omega\mathbb M_me_i\cdot\Phi\,dx
		=
		\sum_{e=K|L}(x_L-x_K)_i
		\int_e\Phi\cdot\nu_e\,dS+\mathcal R_m,
		\qquad
		|\mathcal R_m|
		\leq Ch_m\|\nabla\Phi\|_{L^\infty(\Omega)}.
		\]
		For \(m\) sufficiently large, \(\Phi\) vanishes on the exterior faces,
		and therefore
		\[
		\begin{aligned}
			\sum_{e=K|L}(x_L-x_K)_i\int_e\Phi\cdot\nu_e\,dS
			&=-\sum_K(x_K)_i\int_K\nabla\cdot\Phi\,dx=-\int_\Omega X_m^{(i)}\nabla\cdot\Phi\,dx.
		\end{aligned}
		\]
		Consequently,
		\[
		\int_\Omega\mathbb M_me_i\cdot\Phi\,dx
		\longrightarrow
		-\int_\Omega x_i\nabla\cdot\Phi\,dx
		=
		\int_\Omega e_i\cdot\Phi\,dx.
		\]
		The uniform \(L^\infty\)-bound and density prove (i) column by column;
		the space--time statement follows because \(\mathbb M_m\) is
		time-independent.
		
		For \((x,t)\in D_e\times I_m^k\),
		\[
		\begin{aligned}
			\nabla_{\mathcal D_m}\zeta_m(x,t)
			&=
			d\nu_e\int_0^1
			\nabla\zeta\bigl(x_K+s(x_L-x_K),t_m^{k+1}\bigr)
			\cdot\nu_e\,ds\\
			&=
			\mathbb M_m\nabla\zeta(x,t)+r_m^\zeta(x,t).
		\end{aligned}
		\]
		The spatial and temporal arguments differ from \((x,t)\) by at most
		\(Ch_m\) and \(\Delta t_m\), proving
		\eqref{eq:appendix-test-gradient}.
		
		Finally, using
		\(\mathbb M_m\nabla_{\mathcal D_m}w_m
		=d\nabla_{\mathcal D_m}w_m\),
		\[
		\begin{aligned}
			\mathcal A_m(w_m,\zeta_m)
			&=
			\frac1d\int_{Q_T}
			\nabla_{\mathcal D_m}w_m\cdot
			\nabla_{\mathcal D_m}\zeta_m\,dx\,dt\\
			&=
			\int_{Q_T}
			\nabla_{\mathcal D_m}w_m\cdot\nabla\zeta\,dx\,dt
			+\frac1d\int_{Q_T}
			\nabla_{\mathcal D_m}w_m\cdot r_m^\zeta\,dx\,dt\\
			&\longrightarrow
			\int_{Q_T}\nabla w\cdot\nabla\zeta\,dx\,dt.
		\end{aligned}
		\]
	\end{proof}
	
	\begin{proposition}[Consistency of the spatial reconstructions]
		\label{prop:appendix-spatial-reconstructions}
		\label{lem:appendix-nonlocal-potential}
		\label{lem:appendix-diamond-average}
		Let \(v_m\geq0\) satisfy
		\[
		\sup_m\|v_m\|_{L^\infty(0,T;L^1(\Omega))}<\infty.
		\]
		
		\begin{enumerate}
			\item[(i)]
			For \(t\in I_m^k\), define
			\[
			\begin{aligned}
				Z_m(x)&:=\sum_{M\in\mathcal T_m}|M|K_\sigma(x,x_M),&
				N_m(x,t)&:=
				\sum_{M\in\mathcal T_m}
				|M|K_\sigma(x,x_M)v_{m,M}^{k+1},&
				\widetilde c_m&:=\frac{N_m}{Z_m}.
			\end{aligned}
			\]
			Then \(\widetilde c_m(x_K,t)
			=K_{\sigma,\mathcal D_m}[v_m^{k+1}]_K\), and
			\begin{align}
				\|\widetilde c_m-\mathcal K_\sigma[v_m]\|_
				{L^\infty(0,T;W^{1,\infty}(\Omega))}
				&\leq C_{\sigma,T}h_m,
				\label{eq:appendix-potential-quadrature}\\
				\sup_m\|\widetilde c_m\|_
				{L^\infty(0,T;W^{2,\infty}(\Omega))}
				&\leq C_{\sigma,T}.
				\label{eq:appendix-potential-W2}
			\end{align}
			If \(v_m\to v\) in \(L^1(Q_T)\) and
			\(v\in L^\infty(0,T;L^1(\Omega))\), then
			\begin{align}
				\widetilde c_m
				&\to\mathcal K_\sigma[v]
				&&\text{in }L^2(0,T;W^{1,\infty}(\Omega)),
				\label{eq:appendix-potential-strong}\\
				\nabla_{\mathcal D_m}c_m
				&=\mathbb M_m\nabla\widetilde c_m+r_m^c,
				&
				\|r_m^c\|_{L^\infty(Q_T)}
				&\leq C_{\sigma,T}h_m,
				\label{eq:appendix-potential-gradient}
			\end{align}
			where \(c_{m,K}^{k+1}:=\widetilde c_m(x_K,t)\).
			
			\item[(ii)]
			For \(t\in I_m^k\), define
			\[
			\overline u_m:=
			\frac{u_{m,K}^{k+1}+u_{m,L}^{k+1}}2
			\ \text{on }D_e,\qquad
			\overline u_m:=u_{m,K}^{k+1}
			\ \text{on boundary subdiamonds}.
			\]
			Then
			\begin{equation}
				\label{eq:appendix-diamond-average-estimate}
				\|\overline u_m-u_m\|_{L^2(Q_T)}^2
				\leq
				Ch_m^2\sum_k\Delta t_m
				|u_m^{k+1}|_{1,\mathcal D_m}^2.
			\end{equation}
			Hence, if \(u_m\to u\) in \(L^2(Q_T)\) and
			\(\sup_m\sum_k\Delta t_m|u_m^{k+1}|_{1,\mathcal D_m}^2<\infty\),
			then \(\overline u_m\to u\) in \(L^2(Q_T)\).
		\end{enumerate}
	\end{proposition}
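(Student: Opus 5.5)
The plan for part (i) is to work at a fixed time level $t\in I_m^k$ and write $v_M:=v_{m,M}^{k+1}$. The identity $\widetilde c_m(x_K,t)=K_{\sigma,\mathcal D_m}[v_m^{k+1}]_K$ follows at once, since $Z_m(x_K)=Z_K^\sigma$ and $N_m(x_K,t)/Z_m(x_K)=\sum_M\omega_{KM}^\sigma v_M$.

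For \eqref{eq:appendix-potential-quadrature}, I would compare the numerator and the normalisation with their continuous counterparts separately. Because $v_m$ is cellwise constant,
\[
\mathcal K_\sigma[v_m](x)=\sum_M v_M\int_M K_\sigma(x,y)\,dy .
\]
Subtracting $N_m(x,t)$ term by term and using $|K_\sigma(x,y)-K_\sigma(x,x_M)|\le \|\nabla_yK_\sigma\|_\infty h_m$ for $y\in M$ gives
\[
|N_m-\mathcal K_\sigma[v_m]|\le Ch_m\sum_M|M|v_M=Ch_m\|v_m(t)\|_{L^1}.
\]
Differentiating in $x$ under the finite sum, the same argument applies to $\nabla_x N_m-\nabla\mathcal K_\sigma[v_m]$. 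Here I use that $\nabla_xK_\sigma$ is Lipschitz in $y$, which is exactly where $K_\sigma\in W^{2,\infty}(\Omega\times\Omega)$ is needed. The same reasoning with $v\equiv1$ and $\int_\Omega K_\sigma(x,y)\,dy=1$ yields $\|Z_m-1\|_{W^{1,\infty}}\le Ch_m$. Together with $Z_m\ge\kappa_\sigma|\Omega|$, the quotient rule then gives the $W^{1,\infty}$ bound with a constant involving $\sup_t\|v_m(t)\|_{L^1}$, which is uniform by hypothesis. For \eqref{eq:appendix-potential-W2} I would simply note that $N_m$ and $Z_m$ have second $x$-derivatives bounded by $\|K_\sigma\|_{W^{2,\infty}}$ times the mass, respectively times $|\Omega|$. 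Since $Z_m$ is bounded below, the quotient inherits the bound.

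For the strong convergence \eqref{eq:appendix-potential-strong}, I would split
\[
\widetilde c_m-\mathcal K_\sigma[v]=(\widetilde c_m-\mathcal K_\sigma[v_m])+\mathcal K_\sigma[v_m-v].
\]
The first piece is $O(h_m)$ by the quadrature estimate. For the second, $\|\mathcal K_\sigma[w]\|_{W^{1,\infty}}\le\|K_\sigma\|_{C^1}\|w\|_{L^1(\Omega)}$, so
\[
\int_0^T\|\mathcal K_\sigma[v_m-v]\|_{W^{1,\infty}}^2\,dt\le C\sup_t\|v_m-v\|_{L^1}\,\|v_m-v\|_{L^1(Q_T)}\to0,
\]
using the uniform $L^\infty(0,T;L^1)$ bounds on $v_m$ and $v$.

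For \eqref{eq:appendix-potential-gradient}, I would repeat the argument of Proposition~\ref{prop:appendix-geometric-consistency}(ii). On $D_e$ one has
\[
\frac{c_L-c_K}{d_{KL}}=\int_0^1\nabla\widetilde c_m\bigl(x_K+s(x_L-x_K)\bigr)\cdot\nu_e\,ds,
\]
where the segment $[x_K,x_L]$ lies in $\overline{K\cup L}$. Every point of this segment and every $x\in D_e$ are within $Ch_m$ of each other. The uniform $W^{2,\infty}$ bound therefore gives
\[
\nabla_{\mathcal D_m}c_m=d\,\nu_e(\nu_e\cdot\nabla\widetilde c_m(x))+O(h_m)=\mathbb M_m\nabla\widetilde c_m+r_m^c .
\]
On boundary subdiamonds both sides vanish.

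For part (ii), I would compute directly on each half-diamond. On $\operatorname{conv}(x_K,e)$,
\[
\overline u_m-u_m=\tfrac12\bigl(u_{m,L}^{k+1}-u_{m,K}^{k+1}\bigr),
\]
and symmetrically on the other half, while the difference vanishes on boundary subdiamonds. Hence
\[
\int_\Omega|\overline u_m-u_m|^2\,dx=\sum_{e}\frac{|e|d_{KL}}{4d}|\delta_eu_m^{k+1}|^2\le h_m^2\sum_e|e|d_{KL}\Bigl|\frac{\delta_eu_m^{k+1}}{d_{KL}}\Bigr|^2,
\]
using $d_{KL}\le2h_m$. Multiplying by $\Delta t_m$ and summing over $k$ gives \eqref{eq:appendix-diamond-average-estimate}, and the convergence $\overline u_m\to u$ follows from the triangle inequality. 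The only delicate point overall is the uniform $W^{1,\infty}$ quadrature error, which hinges on the Lipschitz dependence of $\nabla_xK_\sigma$ on $y$. I expect it to be routine once the second-derivative bound on the kernel is used.
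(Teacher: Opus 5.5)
Your proposal is correct and follows essentially the same route as the paper. The shared steps are: the cell-by-cell comparison of $N_m$ with $\mathcal K_\sigma[v_m]$ and of $Z_m$ with $1$, using the Lipschitz dependence of $\nabla_xK_\sigma$ on $y$; the quotient rule with $Z_m\geq\kappa_\sigma|\Omega|$; $L^1(0,T)$ convergence plus $L^\infty(0,T)$ boundedness to reach $L^2(0,T;W^{1,\infty}(\Omega))$; the Taylor argument of Proposition~\ref{prop:appendix-geometric-consistency}(ii); and the exact half-diamond identity with weight $|e|d_{KL}/(4d)$.

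The one slip is your aside that $[x_K,x_L]\subset\overline{K\cup L}$, which can fail on general orthogonal meshes (e.g.\ Voronoi cells dual to obtuse Delaunay triangles). It is harmless, because you only use that all points of the segment and of $D_e$ lie within $Ch_m$ of one another, exactly as the paper does.
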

	
	\begin{proof}
		For every multi-index \(|\alpha|\leq1\),
		\[
		\begin{aligned}
			\|\partial_x^\alpha
			(N_m-\mathcal K_\sigma[v_m])\|_{L^\infty(\Omega)}
			&\leq C_\sigma h_m\|v_m(t)\|_{L^1(\Omega)},&
			\|Z_m-1\|_{W^{1,\infty}(\Omega)}
			&\leq C_\sigma h_m,\\
			Z_m&\geq\kappa_\sigma|\Omega|,&
			\|N_m(t)\|_{W^{2,\infty}(\Omega)}
			+\|Z_m\|_{W^{2,\infty}(\Omega)}
			&\leq C_{\sigma,T}.
		\end{aligned}
		\]
		Indeed, the first estimate follows from
		\[
		\partial_x^\alpha N_m-\partial_x^\alpha\mathcal K_\sigma[v_m]
		=
		\sum_Mv_{m,M}^{k+1}
		\int_M
		\bigl[
		\partial_x^\alpha K_\sigma(x,x_M)
		-\partial_x^\alpha K_\sigma(x,y)
		\bigr]\,dy.
		\]
		The quotient rule proves
		\eqref{eq:appendix-potential-quadrature} and
		\eqref{eq:appendix-potential-W2}.
		
		Moreover,
		\[
		\|\mathcal K_\sigma[v_m](t)-\mathcal K_\sigma[v](t)\|_
		{W^{1,\infty}(\Omega)}
		\leq C_\sigma\|v_m(t)-v(t)\|_{L^1(\Omega)}.
		\]
		The left-hand side converges in \(L^1(0,T)\) and is uniformly bounded
		in \(L^\infty(0,T)\), hence converges in \(L^2(0,T)\). Together with
		\eqref{eq:appendix-potential-quadrature}, this proves
		\eqref{eq:appendix-potential-strong}. The spatial Taylor argument from
		Proposition~\ref{prop:appendix-geometric-consistency} gives
		\eqref{eq:appendix-potential-gradient}.
		
		Finally,
		\[
		\begin{aligned}
			\|\overline u_m-u_m\|_{L^2(Q_T)}^2
			=
			\frac1{4d}\sum_k\Delta t_m
			\sum_{e=K|L}|e|d_{KL}
			|\delta_eu_m^{k+1}|^2\leq
			Ch_m^2\sum_k\Delta t_m
			|u_m^{k+1}|_{1,\mathcal D_m}^2,
		\end{aligned}
		\]
		because \(d_{KL}\leq2h_m\). The conclusion follows by the triangle
		inequality.
	\end{proof}
	
	\begin{proposition}[Consistency of time, data, and boundary reconstructions]
		\label{prop:appendix-elementary-reconstructions}
		\label{lem:appendix-left-reconstruction}
		\label{lem:appendix-cell-time-average}
		\label{lem:appendix-boundary-measure}
		The following properties hold.
		
		\begin{enumerate}
			\item[(i)]
			For \(1\leq p<\infty\), define
			\[
			w_m(t):=w_m^{k+1},\qquad
			w_m^-(t):=w_m^k,\qquad t\in I_m^k.
			\]
			If \(w_m\to w\) in \(L^p(Q_T)\) and
			\(\sup_m\|w_m^0\|_{L^p(\Omega)}<\infty\), then
			\(
			w_m^-\to w\quad\text{in }L^p(Q_T).
			\)
			
			\item[(ii)]
			With
			\[
			(\Pi_mf)_K^{k+1}
			:=
			\frac1{|K|\Delta t_m}
			\int_{I_m^k}\int_Kf(x,t)\,dx\,dt,
			\]
			one has, for every \(f\in L^p(Q_T)\),
			\[
			\|\Pi_mf\|_{L^p(Q_T)}\leq\|f\|_{L^p(Q_T)},
			\qquad
			\Pi_mf\to f\quad\text{in }L^p(Q_T).
			\]
			In particular, if \(u_m\to u\) in \(L^2(Q_T)\) and
			\(u_{\mathrm r}\in L^\infty(Q_T)\), then
			\[
			\Pi_m[(u_{\mathrm r}-u_m)^+]
			\to(u_{\mathrm r}-u)^+
			\quad\text{in }L^2(Q_T).
			\]
			
			\item[(iii)]
			For \(\psi\in\mathscr T_T\), define
			\[
			\psi_m^\partial(\xi,t):=
			\psi(x_{K(e)},t_m^{k+1})
			\quad\text{on }\widehat e\times I_m^k,
			\qquad
			\psi_m^\partial:=\psi
			\quad\text{on }\Sigma\times[0,T].
			\]
			Then
			\begin{align}
				\|\psi_m^\partial-\psi\|_
				{L^\infty(\Gamma_{\mathrm v}\times[0,T])}
				&\leq C_\psi(h_m+\Delta t_m),
				\label{eq:appendix-boundary-uniform}\\
				\sum_k\sum_{e\in\mathcal E_{m,\mathrm v}}
				\overline\mu(\widehat e\times I_m^k)
				\psi(x_{K(e)},t_m^{k+1})
				&\longrightarrow
				\int_{\Gamma_{\mathrm v}\times(0,T]}
				\psi\,d\overline\mu.
				\label{eq:appendix-boundary-limit}
			\end{align}
		\end{enumerate}
	\end{proposition}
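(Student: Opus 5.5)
I would prove the three items of Proposition~\ref{prop:appendix-elementary-reconstructions} separately, since each is essentially elementary.

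\emph{Item (i).} I would compare $w_m^-$ with the time-shifted function. For $t\in I_m^k$ with $k\geq1$ one has $w_m^-(t)=w_m(t-\Delta t_m)$, while on $I_m^0$ one has $w_m^-=w_m^0$. Therefore
\[
\|w_m^--w\|_{L^p(Q_T)}\leq \|w_m(\cdot-\Delta t_m)-w(\cdot-\Delta t_m)\|_{L^p(\Omega\times(\Delta t_m,T))}
+\|w(\cdot-\Delta t_m)-w\|_{L^p(\Omega\times(\Delta t_m,T))}
+R_m ,
\]
where the boundary-slab remainder is
\[
R_m=\Delta t_m^{1/p}\|w_m^0\|_{L^p(\Omega)}+\|w\|_{L^p(\Omega\times(0,\Delta t_m))}.
\]
The first term is bounded by $\|w_m-w\|_{L^p(Q_T)}\to0$. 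The second tends to zero by continuity of translations in $L^p$. The remainder $R_m$ tends to zero by the uniform bound on $w_m^0$ and absolute continuity of the integral.

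\emph{Item (ii).} The bound $\|\Pi_mf\|_{L^p}\leq\|f\|_{L^p}$ follows from Jensen's inequality on each cell $K\times I_m^k$. For convergence I would use density. For $f\in C(\overline Q_T)$, uniform continuity together with $h_m+\Delta t_m\to0$ gives $\Pi_mf\to f$ uniformly. A standard $\varepsilon/3$ argument, combined with the uniform contraction bound, then extends this to all of $L^p$. For the reaction factor, the Lipschitz property of $r\mapsto r^+$ gives
\[
(u_{\mathrm r}-u_m)^+\to(u_{\mathrm r}-u)^+ \quad\text{in }L^2(Q_T).
\]
Writing
\[
\Pi_m[(u_{\mathrm r}-u_m)^+]-(u_{\mathrm r}-u)^+ =\Pi_m\big[(u_{\mathrm r}-u_m)^+-(u_{\mathrm r}-u)^+\big]+\big(\Pi_m(u_{\mathrm r}-u)^+-(u_{\mathrm r}-u)^+\big),
\]
the first bracket tends to zero by the contraction bound and the second by the convergence just shown.

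\emph{Item (iii).} For $\xi\in\widehat e\subset e$ and $t\in I_m^k$, one has $|x_{K(e)}-\xi|\leq\operatorname{diam}K(e)\leq h_m$ and $|t_m^{k+1}-t|\leq\Delta t_m$. Since $\psi\in C^1([0,T];C^2(\overline\Omega))$, the mean value theorem gives \eqref{eq:appendix-boundary-uniform} on $\bigcup_e\widehat e\times(0,T]$. On $\Sigma$ the difference vanishes by definition. The remaining null pieces are covered as follows: $\{t=0\}$ carries no $\overline\mu$-mass because $\overline\mu$ is absolutely continuous in time, and $\psi_m^\partial$ may be defined there by continuity. For \eqref{eq:appendix-boundary-limit}, I would note that the discrete sum equals $\int_{\Gamma_{\mathrm v}\times(0,T]}\psi_m^\partial\,d\overline\mu$. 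This holds because the sets $\widehat e\times I_m^k$ partition $(\Gamma_{\mathrm v}\setminus\Sigma)\times(0,T]$, and $\overline\mu(\Sigma\times[0,T])=0$. The difference from the limit is then bounded by
\[
C_\psi(h_m+\Delta t_m)\,\overline\mu(\Gamma_{\mathrm v}\times[0,T]),
\]
which tends to zero.

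The main obstacle is the measure-theoretic bookkeeping in (iii). One must verify that $\psi_m^\partial$ is Borel, which follows from the Borel partition, and that no $\overline\mu$-mass lies on $\Sigma$ or at $t=0$. Once this is checked, the uniform estimate against a finite measure closes the argument.
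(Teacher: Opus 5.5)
Your proposal is correct and follows the paper's proof essentially step by step. For (i) you use the shift identity $w_m^-(t)=w_m(t-\Delta t_m)$ together with a first-slab remainder. For (ii) you use Jensen's inequality, density, and the $1$-Lipschitz property of $r\mapsto r^+$. For (iii) you use the pointwise bounds $|x_{K(e)}-\xi|\le h_m$ and $|t_m^{k+1}-t|\le\Delta t_m$, then rewrite the discrete sum as $\int\psi_m^\partial\,d\overline\mu$. Your extra remarks make explicit what the paper leaves implicit: that $\psi_m^\partial$ is Borel, and that the slice $\{t=0\}$, where the paper does not define $\psi_m^\partial$ off $\Sigma$, carries no $\overline\mu$-mass.
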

	
	\begin{proof}
		Since \(w_m^-(t)=w_m(t-\Delta t_m)\) for
		\(t>\Delta t_m\),
		\[
		\begin{aligned}
			\|w_m^--w\|_{L^p(Q_T)}^p
			\leq{}&
			C_p\|w_m-w\|_{L^p(Q_T)}^p
			+C_p\|w(\cdot+\Delta t_m)-w\|_{L^p(Q_T)}^p\\
			&+C_p\Delta t_m\|w_m^0\|_{L^p(\Omega)}^p
			+C_p\int_0^{\Delta t_m}\|w(t)\|_{L^p(\Omega)}^p\,dt
			\longrightarrow0.
		\end{aligned}
		\]
		
		Jensen's inequality gives
		\(\|\Pi_mf\|_{L^p}\leq\|f\|_{L^p}\), while density of smooth functions
		gives \(\Pi_mf\to f\). With \(G:=(u_{\mathrm r}-u)^+\),
		\[
		\begin{aligned}
			\|\Pi_m[(u_{\mathrm r}-u_m)^+]-G\|_{L^2}
			&\leq
			\|\Pi_m[(u_{\mathrm r}-u_m)^+-(u_{\mathrm r}-u)^+]\|_{L^2}
			+\|\Pi_mG-G\|_{L^2}\\
			&\leq
			\|u_m-u\|_{L^2}+\|\Pi_mG-G\|_{L^2}
			\longrightarrow0.
		\end{aligned}
		\]
		
		Finally, for
		\((\xi,t)\in\widehat e\times I_m^k\),
		\[
		|x_{K(e)}-\xi|\leq h_m,\qquad
		|t_m^{k+1}-t|\leq\Delta t_m,
		\]
		which proves \eqref{eq:appendix-boundary-uniform}. Since
		\((\widehat e)_e\) partitions
		\(\Gamma_{\mathrm v}\setminus\Sigma\) and
		\(\overline\mu(\Sigma\times[0,T])=0\),
		\[
		\begin{aligned}
			\sum_{k,e}
			\overline\mu(\widehat e\times I_m^k)
			\psi(x_{K(e)},t_m^{k+1})
			&=
			\int_{\Gamma_{\mathrm v}\times(0,T]}
			\psi_m^\partial\,d\overline\mu,\\
			\left|\int
			(\psi_m^\partial-\psi)\,d\overline\mu\right|
			&\leq
			\|\psi_m^\partial-\psi\|_{L^\infty}
			\overline\mu(\Gamma_{\mathrm v}\times[0,T])
			\longrightarrow0.
		\end{aligned}
		\]
	\end{proof}
	
\end{document}